\newtheorem{theorem}{Theorem}[section]
\newtheorem{prop}[theorem]{Proposition}
\newtheorem{lemma}[theorem]{Lemma}
\newtheorem{cor}[theorem]{Corollary}
\theoremstyle{definition}
\newtheorem{defi}[theorem]{Definition}
\newtheorem{rem}[theorem]{Remark}
\newtheorem{example}[theorem]{Example}
\newcommand{\sbt}{\,\begin{picture}(-1,1)(-1,-3)\circle*{2}\end{picture}\ }
\newcommand{\ihsk}{$\rm IHS-K3^{[2]}$ } 
\newcommand{\ihskpt}{$\rm IHS-K3^{[2]}$. } 
\newcommand{\lra}{\longrightarrow}
\newcommand{\ie}{i.e. }
\newcommand{\loccit}{loc. cit.}
\newcommand{\IP}{\mathbb{P}}
\newcommand{\IC}{\mathbb{C}}
\newcommand{\IR}{\mathbb{R}}
\newcommand{\IF}{\mathbb{F}}
\newcommand{\IZ}{\mathbb{Z}}
\newcommand{\IQ}{\mathbb{Q}}
\newcommand{\IN}{\mathbb{N}}
\newcommand{\cX}{\mathcal{X}}
\newcommand{\cO}{\mathcal{O}}
\newcommand{\frM}{\mathfrak{M}}
\newcommand{\coloneqq}{:=}
\DeclareMathOperator{\rank}{\rm{rank}}
\DeclareMathOperator{\trace}{tr}
\DeclareMathOperator{\Aut}{\rm Aut}
\DeclareMathOperator{\Def}{\rm Def}
\DeclareMathOperator{\Hom}{\rm Hom}
\DeclareMathOperator{\id}{\rm id}
\DeclareMathOperator{\NS}{\rm NS}
\DeclareMathOperator{\Pf}{\rm Pf}
\DeclareMathOperator{\Trans}{\rm Transc}
\DeclareMathOperator{\Sym}{\rm Sym}
\DeclareMathOperator{\Gr}{\rm Gr}
\DeclareMathOperator{\discr}{\rm discr}
\DeclareMathOperator{\Res}{\rm Res}
\begin{document}

\date{\today}

\title[Non-symplectic automorphisms]{Classification of automorphisms on a deformation family of hyperk\"ahler fourfolds\\ by $p$-elementary lattices}
\author{Samuel Boissi\`ere, Chiara Camere and Alessandra Sarti}

\address{Samuel Boissi\`ere, Universit\'e de Poitiers, 
Laboratoire de Math\'ematiques et Applications, 
 T\'el\'eport 2 
Boulevard Marie et Pierre Curie
 BP 30179,
86962 Futuroscope Chasseneuil Cedex, France}
\email{samuel.boissiere@math.univ-poitiers.fr}
\urladdr{http://www-math.sp2mi.univ-poitiers.fr/~sbossie/}

\address{Chiara Camere, Leibniz Universit\"at Hannover,
Institut f\"ur Algebraische Geometrie,
Welfengarten 1
30167 Hannover, Germany} 
\email{camere@math.uni-hannover.de}
\urladdr{http://www.iag.uni-hannover.de/~camere}

\address{Alessandra Sarti, Universit\'e de Poitiers, 
Laboratoire de Math\'ematiques et Applications, 
 T\'el\'eport 2 
Boulevard Marie et Pierre Curie
 BP 30179,
86962 Futuroscope Chasseneuil Cedex, France}
\email{sarti@math.univ-poitiers.fr}
\urladdr{http://www-math.sp2mi.univ-poitiers.fr/~sarti/}

\subjclass{Primary 14J50 ; Secondary 14C50, 55T10}

\keywords{Irreducible holomorphic symplectic manifolds, automorphisms}

\begin{abstract}
We give a classification of all non-symplectic automorphisms of prime order $p$ acting on irreducible 
holomorphic symplectic fourfolds deformation equivalent
to the Hilbert scheme of two points on a K3 surface, for $p=2,3$ and $7\leq p\leq 19$. Our classification 
relates the isometry classes of two natural lattices associated to the action of the automorphism on the second cohomology
group with integer coefficients with some invariants of the fixed locus and we
provide explicit examples. As an application, we find
new examples of \emph{non-natural} non-symplectic automorphisms. 
\end{abstract}

\maketitle

\section{Introduction}

Irreducible holomorphic symplectic (IHS) manifolds (or equivalently hyperk\"ahler manifolds), together with Calabi-Yau manifolds, are the natural higher dimensional 
generalizations of K3 surfaces. In particular many properties of automorphisms on K3 surfaces 
generalize to IHS manifolds (see \cite[\S 4]{Beauvillec1Nul}). The interest in automorphisms
of IHS manifold has  grown up extremely  in the last years (see \cite{BeauvilleInv, Camere, Mongardi, OS, BNWSenriques, BNWS}) especially the study of automorphisms of prime order on IHS
fourfolds deformation equivalent to the Hilbert scheme of two points on a K3 surface, that we call for short \ihskpt The case of 
symplectic automorphisms (i.e. those automorphisms leaving invariant the holomorphic two form) was studied by the second author \cite{Camere} for $p=2$ and  then completely settled by Mongardi~\cite{MongardiPhD} for all primes. They describe the fixed locus, which is never empty and  consists of isolated fixed points, abelian surfaces and K3 surfaces. The case of non-symplectic  involutions was considered first by Beauville \cite{BeauvilleInv} and recently by Ohashi--Wandel \cite{OW}. 
Here the authors study in detail families of \ihsk with 19 parameters and non-symplectic involution. In particular they describe
some {\it non-natural} involutions: these cannot be deformed to an involution on the Hilbert scheme of two points on a K3 surface induced by one on the K3 surface.

In this paper we classify the non-symplectic automorphisms of prime order $p\geq 3$ acting on \ihskpt As an application of our results, we construct the first known examples of non-natural non-symplectic automorphisms of order $3$ on \ihskpt This comes from the study of  non-symplectic automorphisms of order 3 on a special 20-dimensional and a special 14-dimensional family of Fano variety of lines on cubic fourfolds (Corollary~\ref{cor:nonnatural}). 

Let $X$ be an \ihskpt The study of non-symplectic automorphisms on $X$ leads to consider two natural lattices: the invariant sublattice $T$ of $H^2(X,\IZ)=U^{{\oplus}3}\oplus E_8^{{\oplus}2}\oplus \langle -2 \rangle$ and its orthogonal complement $S$. The lattice $T$ is contained in the N\'eron-Severi group of  $X$, while the lattice $S$ contains the transcendental lattice. These two lattices play an important role when studying moduli spaces. In the case of K3 surfaces, they also determine completely the topology of the fixed locus (see \cite{AS, AST}). In this paper, using lattice theory and a formula relating topological invariants of the fixed locus with lattice invariants (see \cite{BNWS}) we 
classify the lattices $S$ and $T$ when the order is $p=2,3$ and $7\leq p \leq 19$.
Our first main result (Theorem~\ref{th:lattice}) classifies all possible lattices $T$ and $S$ for $p\neq 2,5$. Our second main result (Theorem~\ref{ball}, Theorem~\ref{exi}) proves that all cases (except one) can be realized by an automorphism.
For $p=11, 13, 17, 19$ all the examples that we find are natural, for $p=3$ some examples are constructed using the Fano variety of lines of a cubic fourfold. In particular in a 12-dimensional family we find an example of non-symplectic automorphism of order 3 of {\it different kind}: it has  the same invariant lattice $T$ and orthogonal complement $S$ as a natural automorphism,  but its fixed locus is different (see Remark~\ref{rem:fixnotunique}). This is very surprising: it shows that in the case of \ihsk the lattice invariants do not uniquely determine  the fixed locus, in contrary to the case of K3 
surfaces. In several cases we construct coarse moduli spaces of  \ihsk with non-symplectic automorphism of order~$p$.
This construction uses the classification of non-symplectic automorphisms of order $p$  on
 K3 surfaces (Theorem~\ref{ball}). 

In the last section of the paper we discuss the case $p=2$. The situation is more complicated  because the lattice $T$ can have different embeddings in the lattice $U^{{\oplus}3}\oplus E_8^{{\oplus}2}\oplus \langle -2 \rangle$. This has an important influence in the construction of the moduli spaces. Our main result is Proposition~\ref{embedT} where we show that every embedding can be realized as the invariant lattice of a non-symplectic involution.
Although many cases can be realized using natural involutions, there are several cases where examples are still 
unknown.

\medskip

{\bf Aknowledgements:} We thank Alice Garbagnati, Klaus Hulek and Keiji Oguiso for 
helpful discussions and their interest in this work and Giovanni Mongardi, K\'evin Tari and Malte Wandel for useful remarks.

\section{Preliminary results on lattice theory}

A {\it lattice} $L$ is a free $\IZ$-module equipped with a nondegenerate symmetric bilinear form
$\langle \cdot, \cdot\rangle$ with integer values.  Its {\it dual lattice} is 
$L^{\vee}\coloneqq\Hom_{\IZ}(L,\IZ)$. It can be also described as follows:
$$
L^{\vee}\cong\{x\in L\otimes \IQ~|~\langle x,v\rangle\in \IZ\quad \forall v\in L\}.
$$
Clearly $L$ is a sublattice of $ L^{\vee}$ of the same rank, so the \emph{discriminant group} ${A_L:=L^{\vee}/L}$ is a finite abelian group whose order is denoted $\discr(L)$ and called the {\it discriminant of $L$}. We denote by
$\ell(A_L)$ the \emph{length} of $A_L$, \ie the minimal number of generators of $A_L$.  In a basis $\{e_i\}_i$ of~$L$, for the Gram matrix 
$M\coloneqq(\langle e_i,e_j\rangle)_{i,j}$ one has $\discr(\Lambda)=|\det(M)|$.

A lattice $L$ is called \emph{even} if $\langle x,x\rangle\in 2\IZ$ for all $x\in L$.  In this case   
the bilinear form induces a  quadratic form $q_L: A_L\lra \IQ/2\IZ$. Denoting by $(s_{(+)},s_{(-)})$ the signature of
$L\otimes\IR$, the triple of invariants $(s_{(+)},s_{(-)},q_L)$ characterizes the \emph{genus} of the even lattice $L$ (see \cite[Chapter 15, \S 7]{conwaysloane}, 
\cite[Corollary 1.9.4]{Nikulinintegral}).

A lattice $L$ is called {\it unimodular} if $A_L=\{0\}$. A sublattice $M\subset L$ is called \emph{primitive} if $L/M$ is a free $\IZ$-module.
If $L$ is unimodular and $M\subset L$ is a primitive sublattice, then $M$ and its orthogonal $M^\perp$ in $L$ have
isomorphic discriminant groups and $q_M=-q_{M^\perp}$.

Let $p$ be a prime number. A lattice $L$ is called $p$-\emph{elementary} if $A_L\cong\left(\frac{\IZ}{p\IZ}\right)^{\oplus a}$ for some
non negative integer $a$ (also called the \emph{length} $\ell(A_L)$ of $A$).  We write $\frac{\IZ}{p\IZ}(\alpha)$, $\alpha\in\IQ/2\IZ$ to denote that the quadratic form $q_L$ takes value $\alpha$ on the generator 
of the $\frac{\IZ}{p\IZ}$ component of the discriminant group. Recall the following classification result:

\begin{theorem}\label{Ruda}\cite[\S 1]{RS}.
\begin{enumerate}
\item An even, hyperbolic, $p$-elementary lattice of  rank $r$ with $p \neq 2$ and $r>2$ is uniquely determined by the integer $a$.
\item For $p\neq 2$, a hyperbolic $p$-elementary lattice with invariants $r,a$  exists if and only if
the following conditions are satisfied:
$a\leq r$, $r\equiv 0 \pmod 2$ and  
\begin{equation*}
\begin{cases}
\text{if } a\equiv 0 \pmod 2, & r\equiv 2 \pmod 4 \\
\text{if } a\equiv 1 \pmod 2, & p\equiv (-1)^{r/2-1} \pmod 4 .
\end{cases}
\end{equation*}
Moreover, if $r\not \equiv 2 \pmod 8$ then $r>a>0$.
\end{enumerate}
\end{theorem}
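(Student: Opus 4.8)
The plan is to recast the classification in Nikulin's language of discriminant forms, using the result cited above that the genus of an even lattice $L$ is determined by its signature $(s_{(+)},s_{(-)})$ together with the discriminant quadratic form $q_L$ on $A_L$. First I would record two structural observations. Since $p$ is odd, $A_L\cong(\IZ/p\IZ)^{\oplus a}$ has odd order, so $L\otimes\IZ_2$ is unimodular; an even unimodular lattice over $\IZ_2$ is an orthogonal sum of rank-two blocks, hence $r=\rank L$ is even, which accounts for the condition $r\equiv 0\pmod 2$. Next I would classify the nondegenerate finite quadratic forms on $(\IZ/p\IZ)^{\oplus a}$ for odd $p$: each is an orthogonal sum of rank-one forms, and two such are isometric if and only if they have the same length $a$ and the same discriminant in $(\IZ/p\IZ)^\ast/((\IZ/p\IZ)^\ast)^2$, so there are exactly two isometry classes $q^{\pm}$ for every $a\geq 1$. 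Using the Gauss–Milgram sum I would compute $\sign(q^{\pm})\pmod 8$ in terms of $a$ and of $p\bmod 4$.

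For the existence statement (2), I would invoke Nikulin's existence theorem for even lattices with prescribed invariants $(s_{(+)},s_{(-)},q)$. With hyperbolic signature $(1,r-1)$ the congruence $s_{(+)}-s_{(-)}=2-r\equiv\sign(q_L)\pmod 8$ is forced; substituting the two values $\sign(q^{\pm})$ and splitting according to the parity of $a$ yields precisely the displayed system (the case $a$ even giving $r\equiv 2\pmod 4$, the case $a$ odd linking $p\bmod 4$ with $(-1)^{r/2-1}$). The inequality $a\leq r$ is Nikulin's rank-$\geq$-length condition. The clause ``if $r\not\equiv 2\pmod 8$ then $r>a>0$'' I would extract from the two extreme cases: $a=0$ forces $L$ to be even unimodular hyperbolic, which exists only when $2-r\equiv 0\pmod 8$, i.e. $r\equiv 2\pmod 8$; and $a=r$ forces $L\cong M(p)$ for an even unimodular hyperbolic $M$ (the discriminant group is as large as the rank allows, so $L$ is $p$ times a unimodular lattice $p$-adically), which again imposes $r\equiv 2\pmod 8$.

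For uniqueness (1), the key point is that the signature congruence singles out a single square class: for fixed even $r$ and fixed $a$ the forms $q^{+}$ and $q^{-}$ have distinct signatures modulo $8$, so only one is compatible with the hyperbolic signature. Hence all even hyperbolic $p$-elementary lattices of rank $r$ and length $a$ carry one and the same discriminant form, and so lie in a single genus. When $r\geq a+2$, Nikulin's uniqueness theorem for indefinite even lattices then gives a unique isometry class in that genus. The remaining boundary cases $a\in\{r-1,r\}$, which by the clause above occur only when $r\equiv 2\pmod 8$, I would settle through the reduction to the (scaled) unimodular case indicated above, where uniqueness of indefinite even unimodular lattices is classical. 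The technical heart of the argument, and the step I expect to demand the most care, is the Gauss–Milgram bookkeeping of $\sign(q^{\pm})\pmod 8$ together with the boundary uniqueness when $r$ fails to exceed $a$ by at least two.
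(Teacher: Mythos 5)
First, a point of reference: the paper does not prove this statement at all — it is quoted verbatim from Rudakov--Shafarevich \cite[\S 1]{RS}. So your proposal cannot be compared with an internal proof; the closest internal relative is Theorem~\ref{indefigen}, whose proof (Eichler: genus $=$ spinor genus in rank $\geq 3$, one class per spinor genus) is exactly the tool that would make your uniqueness argument uniform. Most of your outline is sound: the $2$-adic unimodularity argument for $r\equiv 0\pmod 2$, the two square classes $q^{\pm}$ on $(\IZ/p\IZ)^{\oplus a}$, the Gauss--Milgram computation (which indeed gives $\sign(q^{+})-\sign(q^{-})\equiv 4\pmod 8$, hence exactly one compatible form for a hyperbolic signature), Nikulin's existence theorem for $0<a<r$, and the reduction of $a=r$ to a rescaled even unimodular lattice (correct, since $a=r$ forces $L=pL^\vee$, so $L(1/p)$ is integral, even and unimodular).

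The genuine gap is the boundary case $a=r-1$ in the uniqueness statement (1). You assert that the cases $a\in\{r-1,r\}$ ``by the clause above occur only when $r\equiv 2\pmod 8$,'' but the clause you proved covers only $a=0$ and $a=r$; it says nothing about $a=r-1$, and the assertion is false there. Concretely, $U(3)\oplus A_2$ is an even hyperbolic $3$-elementary lattice with $r=4$, $a=3=r-1$ and $r\equiv 4\pmod 8$, and it satisfies all the conditions of part (2) (here $a$ is odd and $p=3\equiv(-1)^{r/2-1}\pmod 4$). Moreover, even where your claim held, the scaled-unimodular reduction is only available for $a=r$: a lattice with $a=r-1$ is not $p$ times a unimodular lattice. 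So as written, uniqueness is unproved precisely on the stratum $a=r-1$, $r\not\equiv 2\pmod 8$, which is nonempty.

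Two standard repairs exist. Either use the duality $L\mapsto L^\vee(p)$: since $pL^\vee\subset L\subset L^\vee$, the lattice $L^\vee(p)$ is again even, hyperbolic and $p$-elementary with invariants $(r,\,r-a)$, and $(L^\vee(p))^\vee(p)\cong L$, so uniqueness for $a=r-1$ follows from the already-settled case $a=1\leq r-2$. Or, more simply, invoke the argument of the paper's Theorem~\ref{indefigen}: any even hyperbolic $p$-elementary lattice of rank $r\geq 3$ is indefinite, its genus equals its spinor genus by Eichler's theorem, and the spinor genus contains a single class; combined with your observation that the signature congruence pins down the discriminant form, this proves part (1) for all $0\leq a\leq r$ at once, with no case distinction on $a$.
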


We formulate also the following generalization of Theorem~\ref{Ruda}, the proof is essentially contained in \cite[Chapter 15, \S 8.2]{conwaysloane},
we give it here again for convenience.
\begin{theorem}\label{indefigen}
Let $S$ be an even, indefinite, p-elementary lattice of rank $r\geq 3$, $p\geq 3$. Then S is uniquely determined
by its signature and its discriminant form.
\end{theorem}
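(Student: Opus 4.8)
The plan is to translate the statement into the language of genera and then reduce the uniqueness to the existence of a single spinor genus. Since $S$ is even, its genus is completely determined by the triple $(s_{(+)},s_{(-)},q_S)$ by \cite[Corollary 1.9.4]{Nikulinintegral}, so it suffices to prove that the genus of $S$ consists of a single isometry class. First I would invoke Eichler's theorem (see \cite[Chapter 15, \S 8.2]{conwaysloane}): for an indefinite lattice of rank $\geq 3$, two lattices in the same genus are isometric if and only if they lie in the same spinor genus, equivalently the number of isometry classes in the genus equals the number of spinor genera. Thus the whole problem reduces to showing that the genus of $S$ contains exactly one spinor genus.

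To count the spinor genera I would use the adelic description: their number is the order of the finite elementary abelian $2$-group $\mathbb{A}^\times/\bigl(\IQ^\times\cdot\prod_v\theta(O(S_v))\bigr)$, where $v$ runs over the places of $\IQ$ and $\theta$ denotes the spinor norm of the local orthogonal group. I would then show this quotient is trivial by computing the local spinor norm groups place by place. Since $S$ is indefinite, at the archimedean place $\theta(O(S_\infty))=\IR^\times$; at every finite prime $q\neq p$ the lattice $S_q$ is unimodular of rank $r\geq 3$, and its orthogonal group already realizes all of $\IZ_q^\times$ as spinor norms. The only remaining computation is at $p$, where the $p$-elementary hypothesis gives a Jordan splitting $S_p\cong S_0\perp pS_1$ with $S_0,S_1$ unimodular over $\IZ_p$ of ranks $r-a$ and $a$, where $a=\ell(A_S)$.

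The main work, and the step I expect to be the most delicate, is this local computation at $p$. Because $p$ is odd and $r\geq 3$, at least one of the two Jordan components has rank $\geq 2$, so its quadratic form represents every class in $\IZ_p^\times/(\IZ_p^\times)^2$; the spinor norms of the associated reflections already produce all units, whence $\theta(O(S_p))\supseteq\IZ_p^\times$. When both components are nontrivial, that is $0<a<r$, one combines a reflection in $S_0$ of spinor norm a unit with a reflection in $pS_1$ of spinor norm $p$ times a unit to obtain $p$ itself, so in fact $\theta(O(S_p))=\IQ_p^\times$. The extreme cases are handled directly: for $a=0$ the lattice $S$ is unimodular and the statement is classical, while for $a=r$ one has $S\cong K(p)$ for an indefinite unimodular lattice $K$, which is unique by the classification of indefinite unimodular lattices, forcing $S$ to be unique as well. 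In every case $\theta(O(S_p))\supseteq\IZ_p^\times$, so $\prod_v\theta(O(S_v))$ contains $\IR^\times\times\prod_q\IZ_q^\times$; since $\IQ$ has class number one, $\IQ^\times\cdot\bigl(\IR^\times\times\prod_q\IZ_q^\times\bigr)=\mathbb{A}^\times$ and the quotient above is trivial. Hence the genus of $S$ has a single spinor genus, and by Eichler a single isometry class, which is precisely the asserted uniqueness.
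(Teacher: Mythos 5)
Your proof is correct, and its skeleton coincides with the paper's: both arguments reduce the statement, via Nikulin's result that the genus of an even lattice is determined by its signature and discriminant form, and via Eichler's theorem that class and spinor genus coincide for indefinite lattices of rank $\geq 3$, to showing that the genus of $S$ contains a single spinor genus. The difference lies entirely in how that last step is handled: the paper disposes of it by citing Conway--Sloane, whereas you prove it, running the adelic count of spinor genera and computing the local spinor-norm groups place by place, using the Jordan splitting $S\otimes\IZ_p\cong S_0\perp pS_1$ that $p$-elementarity provides. What your version buys is that it makes visible exactly where the hypotheses enter: the paper's phrase ``since $r\geq 3$, the genus and the spinor genus of $S$ coincide'' is not literally true for arbitrary indefinite lattices of rank $\geq 3$ (compare Theorem~\ref{cs}: genera with several classes do exist once the discriminant is sufficiently divisible), and it is the oddness of $p$ together with the fact that every local Jordan decomposition of $S$ has a unimodular constituent of rank $\geq 2$, or is entirely $p$-scaled, that rules this out --- precisely your computation at $p$. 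What the paper's version buys is brevity, and it sidesteps the one imprecision in yours: when the block of rank $\geq 2$ is the scaled one $pS_1$ (i.e. $r-a\leq 1$), the associated reflections have spinor norms in $p\IZ_p^\times$ rather than $\IZ_p^\times$, and the units are obtained only as products of pairs of such reflections; since $\theta(O(S_p))$ is a group this still yields $\theta(O(S_p))\supseteq\IZ_p^\times$, so your conclusion stands. Finally, your separate treatment of the extreme cases $a=0$ and $a=r$ is correct but redundant, since the spinor-norm bound already covers them.
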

\begin{proof}
 By a result of Eichler (see \cite[Chapter 15, Theorem 14]{conwaysloane}), since $r\geq 3$, the genus and the spinor genus of $S$ coincide, so by \cite[Chapter 15, Theorem 13]{conwaysloane} the genus contains only one isomorphism class. 
Now by \cite[Corollary 1.9.4]{Nikulinintegral} the genus of an even lattice is uniquely determined by the signature and the discriminant form.
\end{proof}

\begin{rem}\label{rem:2elem}
For $2$-elementary lattices the situation is different: an even indefinite $2$-elementary 
lattice is determined up to isometry by its signature, length and a third invariant $\delta\in\{0,1\}$. We refer to  
\cite[Theorem~1.5.2]{dolgabourbaki},  \cite[Theorem~4.3.1, Theorem~4.3.2]{Nikulinfactor}, \cite[\S 1]{RS} for the relations between these invariants.
\end{rem}

The following results on the unicity of the isometry class of a lattice of a given genus and on splitting of lattices will be needed in the sequel:
\begin{theorem}\label{kneser}\cite[Theorem 2.2]{morrison}.
Let $L$ be an even lattice of invariants $(s_{(+)}, s_{(-)}, q_L)$. Assume that $s_{(+)}>0$, $s_{(-)}>0$ and 
 $\ell(A_L)\leq \rank L-2$. Then up to isometry, $L$ is the only lattice with those invariants.   
\end{theorem}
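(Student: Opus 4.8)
The plan is to reduce the assertion to the uniqueness of the isometry class inside a single genus, and then to count the spinor genera of that genus by a local computation. First I would invoke \cite[Corollary 1.9.4]{Nikulinintegral}: the invariants $(s_{(+)},s_{(-)},q_L)$ determine the genus of $L$, so any even lattice with the same invariants lies in the genus of $L$, and it suffices to show that this genus consists of a single isometry class. The hypothesis $\ell(A_L)\leq \rank L-2$ leaves two cases: if $A_L=0$ then $L$ is an even indefinite unimodular lattice, which is determined up to isometry by its signature; otherwise $\ell(A_L)\geq 1$ forces $\rank L\geq 3$. In the latter case Eichler's theorem \cite[Chapter 15, Theorem 13]{conwaysloane} guarantees that each spinor genus in the genus of $L$ contains exactly one class, so the problem becomes: the genus of $L$ contains a single spinor genus.

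To prove the latter, I would use strong approximation, according to which the number of spinor genera is the index
\[
[\,\mathbb{I}_{\IQ}:\IQ^*\cdot\prod_v\theta(O^+(L_v))\,]
\]
of the idele group modulo the rational points and the product of the local spinor norm groups (everything taken modulo squares). The key is the local computation, which is where the hypothesis enters. For a prime $p$, fix a Jordan decomposition $L\otimes\IZ_p\cong\bigoplus_{i\geq 0}p^iL^{(i)}$ with each $L^{(i)}$ unimodular over $\IZ_p$; then the $p$-primary part of $A_L$ has length $\sum_{i\geq 1}\rank L^{(i)}$, which is at most $\ell(A_L)\leq \rank L-2$, so the unimodular component $L^{(0)}$ has rank $\geq 2$. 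A standard argument then yields $\theta(O^+(L\otimes\IZ_p))\supseteq\IZ_p^*$: for $p$ odd the reduction of $L^{(0)}$ modulo $p$ is a nondegenerate $\IF_p$-form of rank $\geq 2$, hence represents every unit, so suitable products of two reflections realize every unit as a spinor norm. At the real place, the hypotheses $s_{(+)}>0$ and $s_{(-)}>0$ provide both a positive and a negative vector, whose reflections compose to an element of spinor norm $-1$, so $\theta(O^+(L\otimes\IR))=\IR^*$.

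Assembling these, $\prod_v\theta(O^+(L_v))\supseteq\IR^*\times\prod_p\IZ_p^*$. Since $\IQ$ has class number one, every idele factors as a rational number times a unit idele in $\IR^*\times\prod_p\IZ_p^*$, whence $\IQ^*\cdot\prod_v\theta(O^+(L_v))=\mathbb{I}_{\IQ}$ and the index above equals $1$. Thus the genus of $L$ has a single spinor genus, and therefore a single isometry class, which is the claim. The step I expect to be the main obstacle is the local spinor norm computation at $p=2$: there the even unimodular $\IZ_2$-lattices and the corresponding spinor norm tables (see \cite[Chapter 15]{conwaysloane}) must be handled explicitly, whereas the odd primes, the archimedean place, and the final index computation are routine.
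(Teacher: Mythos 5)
The paper never actually proves this statement: it is imported verbatim from \cite[Theorem 2.2]{morrison}, which in turn rests on Nikulin's uniqueness theorem in \cite{Nikulinintegral}. The closest the paper comes to an argument of its own is the proof of Theorem~\ref{indefigen}, which has the same skeleton as yours --- the genus is determined by $(s_{(+)},s_{(-)},q_L)$ by \cite[Corollary 1.9.4]{Nikulinintegral}, and Eichler's theorem reduces ``one class per genus'' to ``one spinor genus per genus'' --- but then simply cites \cite[Chapter 15]{conwaysloane} for that last step. That last step is exactly what you prove by strong approximation, so your proposal is a correct, essentially self-contained version of the argument that the paper delegates to the literature. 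Your reductions all check out: $A_L=0$ gives an even indefinite unimodular lattice, determined by its signature; otherwise $\rank L\geq 3$, so Eichler applies; the number of spinor genera is the idele index you write down; the odd-prime computation (the scale-one Jordan block has rank $\geq \rank L-\ell(A_L)\geq 2$, its reduction mod $p$ is nondegenerate of rank $\geq 2$ and so represents every unit, and products of two reflections then realize every unit as a spinor norm) and the archimedean computation are both correct; and class number one for $\IQ$ finishes the index count.

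The step you defer at $p=2$ is indeed the crux, and it closes precisely because $L$ is even --- a hypothesis you do not otherwise use at the finite places. The scale-one Jordan block of $L\otimes\IZ_2$ inherits evenness from $L$, so it is an even unimodular $\IZ_2$-lattice of rank $\geq 2$, hence an orthogonal sum of copies of $U$ and of $V=\left(\begin{smallmatrix}2&1\\1&2\end{smallmatrix}\right)$. For a summand $U$, the rotations $e\mapsto ue$, $f\mapsto u^{-1}f$ with $u\in\IZ_2^\ast$ preserve the lattice and have spinor norm $u$; for a summand $V$, the form takes values $2u$ on primitive vectors with $u$ running through every square class of units, so products of two reflections again realize all of $\IZ_2^\ast$. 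Extending by the identity on the remaining Jordan blocks gives $\theta(O^+(L\otimes\IZ_2))\supseteq\IZ_2^\ast$, which is exactly what your index computation needs. Evenness is essential here: for the odd lattice $\langle 1\rangle\oplus\langle 1\rangle$ over $\IZ_2$ the represented units are only those $\equiv 1\pmod 4$, so the naive reflection argument would not exhaust $\IZ_2^\ast$. Your instinct to single out $p=2$ as the only delicate point was therefore exactly right, and once that computation is written out your proof is complete.
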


\begin{theorem}\label{cs}\cite[Chapter 15, Theorem 21]{conwaysloane}. 
If $L$ is an indefinite lattice of rank $n$ and discriminant $d$ with more than one isometry class in its genus then
$4^{[\frac{n}{2}]} d$ is divisible by $k^{\binom{n}{2}}$ for some nonsquare natural number $k\equiv 0, 1 \mod 4$. 
\end{theorem}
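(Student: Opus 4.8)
The plan is to recast the statement in the language of spinor genera, where the number of classes in a genus is computed from local data, and then to extract the divisibility condition from the local spinor norm groups. Throughout I assume $n\geq 3$, which is the substantial case; for $n\leq 2$ the asserted divisibility is weak and follows directly.

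First I would reduce to counting spinor genera. Since $L$ is indefinite of rank $n\geq 3$, Eichler's theorem (strong approximation for the spin group) gives that the number of isometry classes in the genus of $L$ equals the number of proper spinor genera; thus the hypothesis ``more than one class'' becomes ``more than one proper spinor genus''. The set of proper spinor genera is a principal homogeneous space under the finite abelian $2$-group
$$
\Sigma \;=\; J_{\IQ}\big/\big(\IQ^{*}\,\textstyle\prod_{v}\theta(O^{+}(L_{v}))\big),
$$
where $J_{\IQ}$ is the idele group, the product runs (restrictedly) over all places $v$ of $\IQ$, $\theta$ is the spinor norm, $L_{v}:=L\otimes\IZ_{v}$ (with $L_\infty:=L\otimes\IR$), and $\IQ^{*}$ is embedded diagonally. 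Hence there is more than one spinor genus if and only if $\Sigma\neq 0$, equivalently if and only if there exists a nontrivial idele class character $\chi$ that is trivial on every local spinor norm group $\theta(O^{+}(L_{v}))$. Because $\Sigma$ is a $2$-group, $\chi$ is quadratic, so by class field theory $\chi=\left(\frac{k}{\cdot}\right)$ is a Kronecker symbol attached to a discriminant $k$; nontriviality forces $k$ to be a nonsquare, and the requirement that $\chi$ be unramified at $\infty$ --- which holds precisely because $L$ is indefinite, so that $\theta(O^{+}(L_{\infty}))=\IR^{*}$ --- forces $k>0$ and $k\equiv 0,1\pmod 4$, the fundamental discriminant of a real quadratic field.

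The heart of the argument, and the step I expect to be the main obstacle, is the local computation of the groups $\theta(O^{+}(L_{p}))$ at the primes $p\mid k$, i.e. exactly the primes where $\chi$ is ramified and $\theta(O^{+}(L_{p}))$ must therefore fail to contain $\IZ_{p}^{*}$. Writing a Jordan splitting $L_{p}=\perp_{i}\,p^{e_{i}}M_{i}$ with the $M_{i}$ unimodular, one shows for odd $p$ that $\theta(O^{+}(L_{p}))\supseteq\IZ_{p}^{*}$ as soon as some constituent has rank $\geq 2$ (a rank-two unimodular form over $\IZ_{p}$ represents every unit, and reflections in such vectors realise all unit spinor norms). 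Consequently deficiency at $p$ forces all constituents to have rank one, hence pairwise distinct scales $e_{i}$; the minimal such configuration is $\{0,1,\dots,n-1\}$, and this spreading of scales is exactly what inflates the local determinant, giving $v_{p}(d)=\sum_{i}e_{i}\geq 0+1+\cdots+(n-1)=\binom{n}{2}$. At $p=2$ the spinor norm is governed by the coarser structure of $\IQ_{2}^{*}$ (where $[\IQ_{2}^{*}:(\IQ_{2}^{*})^{2}]=8$ rather than $4$), and the weaker bound obtained there is what accounts for the correction factor $4^{[n/2]}$. Assembling these local lower bounds over the primes dividing $k$ and taking $k$ to be the fundamental discriminant of $\chi$ yields $k^{\binom{n}{2}}\mid 4^{[n/2]}\,d$, as claimed. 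The delicate point throughout is this uniform bookkeeping of Jordan scales --- obtaining the exponent $\binom{n}{2}$ for odd primes and the power-of-$2$ correction simultaneously; once the local spinor norm groups are under control, the reduction to spinor genera and the identification of $\chi$ with a Kronecker symbol are formal.
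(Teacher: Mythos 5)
The paper itself offers no proof of this statement: it is quoted directly from Conway--Sloane \cite[Chapter 15, Theorem 21]{conwaysloane}, so the only meaningful comparison is against what a complete proof requires, which is essentially the argument in that source. Your strategy is the correct one and coincides with it: use Eichler's theorem (indefinite, rank $\geq 3$) to replace classes by proper spinor genera; describe these as a torsor under the idelic $2$-group $J_{\IQ}/\bigl(\IQ^*\prod_v\theta(O^+(L_v))\bigr)$; translate ``more than one class'' into the existence of a nontrivial quadratic idele class character $\chi$ killing every local spinor norm group; use indefiniteness to see $\chi$ is unramified at infinity, so $\chi$ corresponds to a real fundamental discriminant $k>0$, $k\equiv 0,1\pmod 4$; and convert ramification of $\chi$ at $p$ into deficiency of $\theta(O^+(L_p))$. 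Your odd-prime analysis is also correct: a rank $\geq 2$ unimodular constituent over $\IZ_p$ ($p$ odd) represents all units, so deficiency forces $n$ rank-one constituents with pairwise distinct scales, whence $v_p(d)\geq 0+1+\cdots+(n-1)=\binom{n}{2}$.

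The genuine gap is the prime $2$, and it is not a routine omission: it is precisely the part of the theorem that produces the factor $4^{[n/2]}$, i.e.\ the entire content of the statement beyond the odd-prime case. Writing that ``the weaker bound obtained there is what accounts for the correction factor $4^{[n/2]}$'' assumes the conclusion rather than proving it. Moreover, the dyadic difficulty is not only, as you suggest, that $[\IQ_2^*:(\IQ_2^*)^2]=8$; the failure modes are structurally different. Over $\IZ_2$ a rank-two unimodular constituent need not contribute all units to the spinor norm group (for instance $\langle 1,1\rangle$ yields only unit classes $\equiv 1 \bmod 4$), so deficiency at $2$ does \emph{not} force rank-one constituents; and for dyadic Jordan splittings the spinor norm group depends on the \emph{gaps} between scales (scales differing by $1$ or $2$ create additional unit spinor norms), not merely on their distinctness. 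Extracting the exact bound $v_2(d)\geq v_2(k)\binom{n}{2}-2[n/2]$ when $\chi$ is ramified at $2$ therefore requires the full computation of dyadic spinor kernels (Theorem 20 and the accompanying tables in Conway--Sloane, resting on Earnest--Hsia type computations), none of which appears in your proposal. A smaller point: your dismissal of $n\leq 2$ needs a different justification, since for $n=2$ Eichler's theorem and the whole spinor apparatus are unavailable; there the statement reduces to the classical fact that an indefinite unimodular binary form is alone in its genus, because for $|d|\geq 2$ a valid $k$ dividing $4d$ always exists.
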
 

We  denote by $U$ the unique even unimodular hyperbolic 
lattice of rank two and by $A_k, D_h, E_l$ the even, negative definite lattices associated to the Dynkin diagrams 
of the corresponding type ($k\geq 1$, $h\geq 4$, $l=6,7,8$). We denote by $L(t)$ the lattice
whose bilinear form is the one on $L$ multiplied by $t\in\IN^\ast$. The following $p$-elementary lattices will be used in the sequel (see \cite{AST}):
\begin{itemize}
 \item For $p\equiv 3 \mod 4$, the lattice
$$
K_p :=\left(
\begin{array}{cc}
-(p+1)/2&1\\
1&-2\\
\end{array}
\right) 
$$
is negative definite and $p$-elementary with $a=1$. Note that $K_3= A_2$.

\item For $p\equiv 1 \mod 4$ the lattice 
$$
H_p:=\left(
\begin{array}{cc}
(p-1)/2&1\\
1&-2\\
\end{array}
\right) 
$$
is hyperbolic and $p$-elementary with $a=1$.

\item The lattice
$$
L_{17}:=\left(
\begin{array}{cccc}
-2&1&0&1\\
1&-2&0&0\\
0&0&-2&1\\
1&0&1&-4
\end{array}
\right) 
$$
is negative definite $17$-elementary with $a=1$. 

\item The lattice $E_6^\vee(3)$ is even, negative definite and $3$-elementary with $a=5$. To get a simple form of its discriminant group one can proceed as follows. By~\cite[Table 2]{AS} the lattice $U(3)\oplus E_6^\vee(3)$ admits a primitive
embedding in the unimodular K3 lattice with orthogonal complement isometric to $U\oplus U(3)\oplus A_2^{\oplus 5}$. It follows that the discriminant form of $E_6^\vee(3)$ is the opposite of those of $A_2^{\oplus 5}$, that is $\IZ/3\IZ(2/3)^{\oplus 5}$.
\end{itemize}

\begin{theorem}\cite[Theorem 1.13.5]{Nikulinintegral}\label{Niki}. Let $L$ be an even indefinite lattice 
of signature $(s_{(+)},s_{(-)})$ and assume that $s_{(+)}>0$ and $s_{(-)}>0$. Then:
\begin{enumerate}
\item If $s_{(+)}+s_{(-)}\geq 3+\ell(A_L)$ then $L\cong U\oplus W$ for a certain even lattice $W$.\\
\item If  $s_{(-)} \geq 8$ and $s_{(+)}+s_{(-)}\geq 9+\ell (A_L)$ then $L\cong E_8\oplus W'$ for a certain even lattice $W'$.
\end{enumerate}
\end{theorem}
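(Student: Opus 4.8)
The plan is to prove both statements by the same mechanism: I construct an even lattice of the prescribed shape ($U\oplus W$ in case (1), $E_8\oplus W'$ in case (2)) having the same signature and discriminant form as $L$, and then I invoke the uniqueness of $L$ within its genus. The uniqueness is provided by Theorem~\ref{kneser}: we have $s_{(+)}>0$ and $s_{(-)}>0$ by hypothesis, and $\rank L=s_{(+)}+s_{(-)}\geq 3+\ell(A_L)\geq \ell(A_L)+2$ in case (1) (respectively $\geq 9+\ell(A_L)$ in case (2)), so $\ell(A_L)\leq \rank L-2$ and $L$ is the unique even lattice with invariants $(s_{(+)},s_{(-)},q_L)$. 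Thus it is enough to exhibit one lattice with exactly these invariants and of the desired form, after which the isomorphism is automatic.

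For (1) I would seek an even lattice $W$ of signature $(s_{(+)}-1,s_{(-)}-1)$ with discriminant form $q_W=q_L$. Since $U$ is unimodular of signature $(1,1)$, the sum $U\oplus W$ then has discriminant form $q_{U\oplus W}=q_W=q_L$ and signature $(s_{(+)},s_{(-)})$, i.e. precisely the invariants of $L$, whence $L\cong U\oplus W$ by the uniqueness above. The existence of $W$ follows from Nikulin's existence criterion \cite[Theorem 1.10.1]{Nikulinintegral}. Indeed, the positivity $s_{(+)}>0$, $s_{(-)}>0$ gives the nonnegativity $s_{(+)}-1\geq 0$ and $s_{(-)}-1\geq 0$; the congruence $\sign q_L\equiv s_{(+)}-s_{(-)}=(s_{(+)}-1)-(s_{(-)}-1)\pmod 8$ holds because, by Milgram's formula, it already holds for $L$; and $\rank W=s_{(+)}+s_{(-)}-2\geq \ell(A_L)+1$. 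This last bound is the decisive one: the extra local conditions in \cite[Theorem 1.10.1]{Nikulinintegral} are active only when the rank equals the length of some $p$-component of the discriminant group, and here $\rank W>\ell(A_L)\geq \ell((A_L)_p)$ for every prime $p$, so they are all vacuous.

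Case (2) is identical after replacing $U$ by $E_8$, which in the conventions of this paper is even, unimodular and negative definite of signature $(0,8)$. I would look for an even $W'$ with $q_{W'}=q_L$ and signature $(s_{(+)},s_{(-)}-8)$; the hypothesis $s_{(-)}\geq 8$ is exactly what makes $s_{(-)}-8\geq 0$ admissible, while $s_{(+)}+s_{(-)}\geq 9+\ell(A_L)$ yields $\rank W'=s_{(+)}+s_{(-)}-8\geq \ell(A_L)+1$, again placing us strictly beyond every equality case, so $W'$ exists by \cite[Theorem 1.10.1]{Nikulinintegral}. Then $E_8\oplus W'$ has discriminant form $q_L$ and signature $(s_{(+)},s_{(-)})$, and uniqueness gives $L\cong E_8\oplus W'$. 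The one genuinely delicate point in the whole argument is this existence step: the numerical hypotheses $s_{(+)}+s_{(-)}\geq 3+\ell(A_L)$ (respectively $\geq 9+\ell(A_L)$) together with $s_{(-)}\geq 8$ are tuned precisely so that the candidate complement $W$ (respectively $W'$) lands in the comfortable range where the only obstructions to its existence are the signature congruence and the nonnegativity of the two partial signatures, both of which we have verified.
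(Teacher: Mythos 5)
Your argument is correct, but note that the paper itself offers no proof to compare against: Theorem~\ref{Niki} is quoted directly from Nikulin as \cite[Theorem 1.13.5]{Nikulinintegral}, with no internal argument. What you have written is essentially Nikulin's own derivation of that result: uniqueness of $L$ in its genus plus existence of the ``complementary'' lattice, and both halves check out. The uniqueness step is legitimate because $s_{(+)}>0$, $s_{(-)}>0$ and $\rank L-\ell(A_L)\geq 3$ (resp.\ $\geq 9$) place you inside the hypotheses of Theorem~\ref{kneser}. The existence step correctly verifies the conditions of \cite[Theorem 1.10.1]{Nikulinintegral}: nonnegativity of the partial signatures uses exactly $s_{(\pm)}\geq 1$ (resp.\ $s_{(-)}\geq 8$); the signature congruence is inherited from Milgram's formula for $L$, since passing from $(s_{(+)},s_{(-)})$ to $(s_{(+)}-1,s_{(-)}-1)$ (resp.\ $(s_{(+)},s_{(-)}-8)$) does not change $s_{(+)}-s_{(-)}$ modulo $8$; and the local conditions at each prime are vacuous because they are only imposed when the rank equals $\ell((A_L)_p)$ for some $p$, whereas your inequalities give $\rank W\geq\ell(A_L)+1$ and $\rank W'\geq\ell(A_L)+1$. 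Then $U\oplus W$ (resp.\ $E_8\oplus W'$, with $E_8$ even, unimodular and negative definite in the paper's convention) is an even lattice with the same invariants as $L$, and Theorem~\ref{kneser} forces the isometry. The one external ingredient is Nikulin's existence theorem, which the paper never states; this is unavoidable for a self-contained proof and causes no circularity, since in Nikulin's paper both the existence theorem and the genus-uniqueness theorem logically precede Theorem 1.13.5 --- indeed, this is exactly how Nikulin deduces it.
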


The following result is an application of Nikulin's results on primitive embeddings~\cite{{Nikulinintegral}}.

\begin{prop}\label{discform}
Let $S$ be an even $p$-elementary lattice, $p\neq 2$,  with invariants $(s_{(+)},s_{(-)},q_S, a)$, and $L\coloneqq U^{\oplus 3}\oplus E_8^{\oplus 2}\oplus \langle -2\rangle$.
If $S$ admits a primitive embedding in~$L$, then the orthogonal complement $T$ of $S$ in $L$ has discriminant group $(\IZ/p\IZ)^{\oplus a}\oplus \IZ/2\IZ$
and discriminant form $(-q_S)\oplus q_L$. If moreover $s_{(+)}<3$, $s_{(-)}<20$ and $a\leq 21-\rank(S)$ then $T$ is uniquely determined
and there is at most one embedding of $S$ in $L$ up to an isomorphism of~$L$.
\end{prop}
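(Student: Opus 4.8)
The plan is to run the whole argument through Nikulin's theory of primitive embeddings and discriminant forms \cite{Nikulinintegral}. A primitive embedding $S\hookrightarrow L$ with orthogonal complement $T$ exhibits $L$ as an overlattice of $S\oplus T$. By \cite[Proposition 1.4.1, Proposition 1.5.1]{Nikulinintegral} such an overlattice corresponds to an isotropic subgroup $H\subseteq A_S\oplus A_T$ for the form $q_S\oplus q_T$, and the primitivity of \emph{both} $S$ and $T$ in $L$ forces $H$ to be the graph of an anti-isometry $\gamma\colon H_S\to H_T$ between subgroups $H_S\subseteq A_S$ and $H_T\subseteq A_T$, so that $q_T(\gamma(x))=-q_S(x)$. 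Moreover $A_L\cong H^\perp/H$ and $q_L=(q_S\oplus q_T)|_{H^\perp/H}$, where $H^\perp$ is taken inside $A_S\oplus A_T$.

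The heart of the matter is to exploit $p\neq 2$ by splitting the discriminant groups into their $p$-primary and prime-to-$p$ parts. Since $A_S=(\IZ/p\IZ)^{\oplus a}$ is entirely $p$-torsion while $H\cap A_T=0$ (primitivity of $T$), every element of $H$ of order prime to $p$ must vanish, so $H$ is $p$-primary; in particular $H\subseteq A_S\oplus(A_T)_p$, with $H_T\subseteq(A_T)_p$. The prime-to-$p$ summand $(A_T)_{p'}$ then lies in $H^\perp$ and injects into $A_L$; comparing orders and forms identifies $(A_T)_{p'}\cong\IZ/2\IZ$ carrying $q_L$. A short order count does the rest: from $|A_L|=\discr(S)\,\discr(T)/|H|^2=2$ and $\discr(S)=p^a$, $\discr(T)=2\,|(A_T)_p|$, one gets $|H|^2=p^a\,|(A_T)_p|\ge p^a\,|H|$, hence $|H|\ge p^a$, and since $|H|\le|A_S|=p^a$ this forces $|H|=p^a$. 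Therefore $H_S=A_S$ and $\gamma$ is an isomorphism onto $H_T=(A_T)_p$, identifying $(A_T)_p\cong(\IZ/p\IZ)^{\oplus a}$ with form $-q_S$. Assembling the two parts yields $A_T=(\IZ/p\IZ)^{\oplus a}\oplus\IZ/2\IZ$ and $q_T=(-q_S)\oplus q_L$, which is the first assertion.

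For the uniqueness of $T$, its signature is $(3-s_{(+)},20-s_{(-)})$, which under the hypotheses $s_{(+)}<3$, $s_{(-)}<20$ is indefinite with both entries positive. Its length is $\ell(A_T)=a$ for $a\geq 1$: indeed $\IZ/p\IZ\oplus\IZ/2\IZ\cong\IZ/2p\IZ$ is cyclic, so the extra $\IZ/2\IZ$ summand does not raise the minimal number of generators. The hypothesis $a\leq 21-\rank(S)$ then reads exactly $\ell(A_T)\leq\rank(T)-2$, since $\rank(T)=23-\rank(S)$. Hence Theorem~\ref{kneser} applies and $T$ is the unique lattice in its genus; as its genus is pinned down by $(s_{(+)}(T),s_{(-)}(T),q_T)$, the lattice $T$ is uniquely determined.

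Finally, for uniqueness of the embedding I would invoke \cite[Theorem 1.14.4]{Nikulinintegral}: a primitive embedding into $L$ with orthogonal complement in the genus of $T$ is unique up to $O(L)$ provided $T$ is unique in its genus and the natural map $O(T)\to O(q_T)$ is surjective. The first condition is the previous paragraph; for the second, the same inequality $\rank(T)\geq\ell(A_T)+2$ together with the indefiniteness of $T$ gives surjectivity by \cite[Theorem 1.14.2]{Nikulinintegral}. Concretely, since $H_S=A_S$ the gluing $\gamma$ is an anti-isometry $A_S\to(A_T)_p$, and any two such gluings differ by an element of $O(q_T)$ (extended by the identity on $(A_T)_{p'}$) which lifts to $O(T)$; thus all admissible gluings are equivalent and the embedding is unique up to an isometry of $L$. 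The main obstacle is the bookkeeping of the second paragraph, namely tracking how the $p$-primary and prime-to-$p$ parts of $A_T$ feed into $A_L$ through $H$ and verifying that the order and orthogonality constraints leave no room for a proper gluing $H_S\subsetneq A_S$; once that is settled, the rank hypotheses needed for Theorem~\ref{kneser} and for \cite[Theorem 1.14.2]{Nikulinintegral} are exactly what the bound $a\leq 21-\rank(S)$ supplies.
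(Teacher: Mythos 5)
Your proof is correct, but it reaches the first assertion by a genuinely different route than the paper. The paper invokes Nikulin's Proposition 1.15.1 directly: there a primitive embedding $S\hookrightarrow L$ is encoded by a quintuple $(H_S,H_L,\gamma,T,\gamma_T)$ in which $\gamma$ identifies a subgroup $H_S\subseteq A_S$ with a subgroup $H_L\subseteq A_L$ preserving the forms; since $A_S$ is $p$-torsion and $A_L\cong\IZ/2\IZ$ with $p$ odd, that gluing is forced to be trivial and $q_T=(-q_S)\oplus q_L$ drops out in one line. You instead start from the raw overlattice correspondence for $S\oplus T\subseteq L$, where the gluing subgroup $H$ sits between $A_S$ and $A_T$, and you must then do real work --- the $p$-primary versus prime-to-$p$ splitting, the injection $(A_T)_{p'}\hookrightarrow A_L$, and the order count forcing $|H|=p^a$ --- to conclude that this gluing is \emph{full} on $A_S$. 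The two computations are complementary descriptions of the same configuration (trivial gluing of $A_S$ against $A_L$ versus maximal gluing of $A_S$ against $(A_T)_p$) and they agree; yours is more self-contained, the paper's is shorter because Proposition 1.15.1 packages exactly this bookkeeping. Your parity step (``comparing orders'') deserves one explicit line: if $(A_T)_{p'}=0$ then $\discr(S)\discr(T)=p^a\,|(A_T)_p|$ is odd while $2|H|^2$ is even, a contradiction. On the uniqueness statements the two arguments essentially coincide: Theorem~\ref{kneser} pins down $T$, and surjectivity of $O(T)\to O(q_T)$ transports one gluing to another --- the paper cites Dolgachev's Lemma~1.4.5 and Proposition~1.4.7 for this, while your lift-and-extend argument is that lemma spelled out, so it stands on its own. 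One citation should be corrected: Nikulin's Theorem~1.14.4 concerns primitive embeddings into even \emph{unimodular} lattices, so it does not apply to $L=U^{\oplus 3}\oplus E_8^{\oplus 2}\oplus\langle -2\rangle$; the correct crutch is again Proposition 1.15.1 (or simply your explicit argument, which renders the citation unnecessary).
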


\begin{proof}
The lattice $L$ has signature $(3,20)$ and discriminant form $q_L=\frac{\IZ}{2\IZ}\left(\frac{3}{2}\right)$ hence by Theorem~\ref{kneser} it 
is unique in its genus. By Nikulin~\cite[Proposition 1.15.1]{Nikulinintegral}, the data of a
primitive embedding of $S$ in $L$ is equivalent to the data of a quintuple $(H_S,H_L,\gamma,T,\gamma_T)$ satisfying
the following conditions:
\begin{itemize}
\item $H_S$ is a subgroup of $A_S=(\IZ/p\IZ)^{\oplus a}$, $H_L$ is a subgroup of $A_L=\IZ/2\IZ$ and $\gamma\colon H_S\to H_L$ is an isomorphism of groups such that for any $x\in H_S$, $q_L(\gamma(x))=q_S(x)$.
Here the only possibility is $H_S=\{0\}$, $H_L=\{0\}$ and $\gamma=\id$.
\item  $T$ is a lattice of invariants $(3-s_{(+)},20-s_{(-)},q_T)$ with $q_T=\left.\left((-q_S)\oplus q_L\right)\right|_{\Gamma^\perp/\Gamma}$,
where $\Gamma$ is the graph of $\gamma$ in $A_S\oplus A_L$, $\Gamma^{\perp}$ is the orthogonal complement of~$\Gamma$ in
$A_S\oplus A_L$ with respect to the bilinear form induced on $A_S\oplus A_L$, with values in $\IQ/\IZ$,
 and $\gamma_T$ is an automorphism of $A_T$ that preserves $q_T$. Moreover $T$ is the orthogonal complement of $S$
in $L$. Here we get $\Gamma=\{0\}$ hence $\Gamma^\perp=A_S\oplus A_L=A_T$ and $q_T=(-q_S)\oplus q_L$ is the only possibility. 
\end{itemize}
Since $p\neq 2$ one has $\ell(A_T)=a$. If $T$ is indefinite (that is $s_{(+)}<3$, ${s_{(-)}<20}$) and $a\leq\rank(T)-2=21-\rank(S)$ then by Theorem~\ref{kneser} the lattice $T$ is uniquely determined up to isometry.
Moreover, under these assumptions the natural homomorphism $O(T)\to O(A_T)$ is surjective (see \cite[Proposition~1.4.7]{dolgabourbaki}) so different choices of the isometry $\gamma_T$ produce isomorphic embeddings of~$S$ in~$L$(see \cite[Lemma~1.4.5]{dolgabourbaki}).
\end{proof}

%%%%%%%%%%%%%%%%%%%%%%%%%%%%%%%%%%%%%%%%%%%%%%%%%%%%%%%%%%%%%%%%%%%%%%%%%%%%%

\section{Automorphisms on deformations of the Hilbert scheme of two points on a K3 surface}

\subsection{Irreducible holomorphic symplectic manifolds}
A complex, compact, K\"ahler, smooth manifold $X$ is called \emph{irreducible holomorphic symplectic} (IHS) if~$X$ is simply connected and $H^0(X,\Omega_X^2)$ is spanned by an everywhere nondegenerate closed two-form, denoted by $\omega_X$. In dimension four, one of the most famous examples of IHS manifolds
are the Hilbert schemes $\Sigma^{[2]}$ of two points on a K3 surface~$\Sigma$.

The second cohomology space has a Hodge decomposition 
$$
H^2(X,\IC)=H^{2,0}(X)\oplus H^{1,1}(X)\oplus H^{0,2}(X)
$$
and we set $H^{1,1}(X)_\IR:=H^{1,1}(X)\cap H^2(X,\IR)$. The second cohomology group $H^2(X,\IZ)$ is torsion-free and  equipped with the Beauville--Bogomolov~\cite{Beauvillec1Nul} bilinear symmetric nondegenerate two-form of signature $(3,b_2(X)-3)$ with the property that, after scalar extension, $H^{1,1}(X)$ is orthogonal to $H^{2,0}(X)\oplus H^{0,2}(X)$. The \emph{N\'eron--Severi} group of $X$ is defined by:
$$
\NS(X):=H^{1,1}(X)_\IR\cap H^2(X,\IZ).
$$
We set $\rho(X):=\rank (\NS(X))$ the \emph{Picard number} of $X$ and $\Trans(X):=\NS(X)^\perp$ the orthogonal complement of $\NS(X)$ in $H^2(X,\IZ)$ for the quadratic form, called the \emph{transcendental lattice}. Note that $\NS(X)$ and $\Trans(X)$ are primitively embedded in $H^2(X,\IZ)$.
By \cite[Theorem 3.11]{Huybrechts} $X$ is projective if and only if $\NS(X)$ is a hyperbolic lattice.

Let $G\subset\Aut(X)$ be a finite group of automorphisms of prime order $p$ and fix a generator $\sigma \in G$. If $\sigma^*\omega_X=\omega_X$ then $G$ is called \emph{symplectic}. Otherwise, there exists a primitive $p$-th root of the unity $\xi$ such that $\sigma^*\omega_X=\xi\omega_X$ and $G$ is called \emph{non-symplectic}. Observe that non-symplectic actions exist only when $X$ is projective (see \cite[\S 4]{BeauvilleKaehler}).
Following the notation
of \cite{BNWS} we denote by $T:=T_G(X)$ the invariant sublattice of $H^2(X,\IZ)$ and by $S:=S_G(X)$ its orthogonal complement (see \cite[Lemma 6.1]{BNWS}). 

Since $h^0(X,TX)=0$ the variety $X$
admits a universal deformation ${p\colon\cX\to\Def(X)}$ where $p$ is a smooth proper holomorphic morphism,
$\Def(X)$ is a germ of  analytic space and $p^{-1}(0)\cong X$ (the isomorphism is part of the data). Although $h^2(X,TX)$ is not zero in general, $\Def(X)$ is smooth of dimension $h^1(X,TX)$ (see~\cite[\S 4]{HuybrechtsBourbaki}
and references therein). If $\Def(X)$
is taken small enough, all nearby fibres $\cX_t:=p^{-1}(t)$, $t\in\Def(X)$, are also IHS manifolds and the universal
deformation $p\colon\cX\to \Def(X)$ is in fact universal also for these fibres $\cX_t$ (see Huybrechts~\cite[\S~1.12]{Huybrechts}). Two IHS manifolds $X$ and $X'$ are called \emph{deformation equivalent} if there exists a smooth proper holomorphic morphism $p\colon\cX\to S$ with connected base $S$, two points $t,t'\in S$ and isomorphims $\cX_{t}\cong X$ and $\cX_{t'}\cong X'$.
We say that an IHS manifold $X$ is an \ihsk if it is deformation equivalent to the Hilbert scheme of two points on a K3 surface.

\subsection{Invariant sublattice and fixed locus}

We recall the main results of Boissi\`ere--Nieper-Wi{\ss}kirchen--Sarti~\cite{BNWS}.

\begin{prop}\cite[Definitions 4.5 and 4.9, Lemma 5.5]{BNWS}.\label{prop:SandT}
Let $X$ be an \ihsk and $G$ a 
finite group of prime order $p$  acting on $X$, with $3\leq p\leq 19$. Then:
\begin{itemize}
\item $\rank S=(p-1)m_G(X)$ for some positive integer $m_G(X)$. 
\item $\frac{H^2(X,\IZ)}{S\oplus T}\cong \left( \frac{\IZ}{p\IZ}\right)^{\oplus a_G(X)}$ for some non-negative integer
$a_G(X)$;
\item $A_T\cong \IZ/2\IZ\oplus (\IZ/p\IZ)^{\oplus a_G(X)}$, $A_S\cong (\IZ/p\IZ)^{\oplus a_G(X)}$;
\item If $G$ acts non-symplectically then $S$ has signature $(2,(p-1)m_G(X)-2)$ and $T$ has signature $(1, 22-(p-1)m_G(X))$. 
\end{itemize}
\end{prop}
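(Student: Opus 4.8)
The plan is to read off everything from the single isometry $\sigma^*\in O(H^2(X,\IZ))$ of order $p$, writing $L:=H^2(X,\IZ)$, $T=\ker(\sigma^*-\id)$ and $S=T^\perp$. Since $\sigma^*$ is an isometry, both $T\otimes\IQ$ and $S\otimes\IQ$ are $\sigma^*$-stable, and $S\otimes\IQ$ contains no invariant vector (such a vector would lie in $(S\otimes\IQ)\cap(T\otimes\IQ)=0$); as $p$ is prime, the minimal polynomial of $\sigma^*$ on $S\otimes\IQ$ is forced to be the $p$-th cyclotomic polynomial $\Phi_p$. Hence $S\otimes\IQ$ is a $\IQ(\zeta_p)$-vector space and $\rank S=(p-1)\,m_G(X)$ for $m_G(X)=\dim_{\IQ(\zeta_p)}(S\otimes\IQ)$; positivity of $m_G(X)$ (i.e. $S\neq 0$) follows from the faithfulness of the representation $\Aut(X)\to O(L)$ for IHS manifolds, which forces $\sigma^*\neq\id$. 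For the index I would use the norm operator $N:=\sum_{i=0}^{p-1}(\sigma^*)^i$, which acts as $p$ on $T\otimes\IQ$ and as $0$ on $S\otimes\IQ$, hence equals $p\,\pi_T$; for $x\in L$ one gets $Nx\in T$ and $px-Nx\in (S\otimes\IQ)\cap L=S$, so $px\in S\oplus T$. Thus $p$ annihilates $H:=L/(S\oplus T)$, which is therefore $p$-elementary, $H\cong(\IZ/p\IZ)^{\oplus a_G(X)}$.

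For the discriminant groups I would first localize at $2$, recalling that $A_L=\IZ/2\IZ$. Since $[L:S\oplus T]=p^{a_G(X)}$ is odd, $L\otimes\IZ_2=(S\otimes\IZ_2)\perp(T\otimes\IZ_2)$, so the $2$-parts satisfy $(A_S)_2\oplus(A_T)_2\cong A_L=\IZ/2\IZ$; in particular $(A_S)_2$ is $0$ or $\IZ/2\IZ$. To decide which, note that the characteristic polynomial of $\sigma^*$ on $S$ is $\Phi_p^{m_G(X)}$, so $\det(\sigma^*-\id|_S)=\pm\Phi_p(1)^{m_G(X)}=\pm p^{m_G(X)}$, a $2$-adic unit; hence $\sigma^*-\id$ is an automorphism of $S\otimes\IZ_2$, of its dual, and therefore of the quotient $(A_S)_2$. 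But $\sigma^*$ acts trivially on any group of order $2$, so if $(A_S)_2\cong\IZ/2\IZ$ then $\sigma^*-\id=0$ there, contradicting invertibility; therefore $(A_S)_2=0$ and $(A_T)_2=\IZ/2\IZ$.

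It remains to pin down the $p$-parts. The projections $A_S\oplus A_T\to A_S$ and $\to A_T$ restrict to injections on $H$ (because $S$ and $T$ are primitive in $L$), so $|(A_S)_p|,|(A_T)_p|\geq p^{a_G(X)}$. On the other hand the index formula gives $\discr(S)\,\discr(T)=[L:S\oplus T]^2\discr(L)=2\,p^{2a_G(X)}$, and combining with $(A_S)_2=0$, $(A_T)_2=\IZ/2\IZ$ yields $|(A_S)_p|\cdot|(A_T)_p|=p^{2a_G(X)}$. The two inequalities then force both $p$-parts to have order exactly $p^{a_G(X)}$; being images of the $p$-elementary group $H$, they are isomorphic to $(\IZ/p\IZ)^{\oplus a_G(X)}$. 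This gives $A_S\cong(\IZ/p\IZ)^{\oplus a_G(X)}$ and $A_T\cong\IZ/2\IZ\oplus(\IZ/p\IZ)^{\oplus a_G(X)}$.

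Finally, for the signatures in the non-symplectic case I would use the Beauville--Bogomolov form, for which the real two-plane underlying $H^{2,0}(X)\oplus H^{0,2}(X)$ is positive definite and $L\otimes\IR$ has signature $(3,20)$. Since $\sigma^*\omega_X=\xi\omega_X$ with $\xi\neq 1$, this plane lies in $S\otimes\IR$, so $s_{(+)}(S)\geq 2$; averaging a K\"ahler class over $G$ produces a $\sigma^*$-invariant K\"ahler class, a positive vector in $T\otimes\IR$, so $s_{(+)}(T)\geq 1$. As $s_{(+)}(S)+s_{(+)}(T)=3$ both inequalities are equalities, and using $\rank T=23-(p-1)m_G(X)$ one reads off the signatures $(2,(p-1)m_G(X)-2)$ for $S$ and $(1,22-(p-1)m_G(X))$ for $T$. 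I expect the only genuinely delicate point to be the determination of where the $2$-torsion of the discriminant sits (the second paragraph): the naive count only gives $(A_S)_2\oplus(A_T)_2\cong\IZ/2\IZ$, and one really needs the $2$-adic invertibility of $\sigma^*-\id$ on $S$ to force the $\IZ/2\IZ$ summand into $A_T$.
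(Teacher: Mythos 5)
Your proof is correct; I checked each step: the $\IQ[G]$-isotypic splitting giving $\rank S=(p-1)m_G(X)$, the norm-operator argument showing $pL\subseteq S\oplus T$, the embedding of $H=L/(S\oplus T)$ into both $A_S$ and $A_T$ via primitivity, the index formula $\discr(S)\discr(T)=[L:S\oplus T]^2\discr(L)=2p^{2a}$, the $2$-adic placement of the $\IZ/2\IZ$ summand, and the signature count using the positive plane of $H^{2,0}\oplus H^{0,2}$ and an averaged K\"ahler class. Note, however, that there is no in-paper proof to compare against: Proposition~\ref{prop:SandT} is imported from \cite{BNWS} (Definitions 4.5, 4.9 and Lemma 5.5). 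In that source the invariants $m_G(X)$ and $a_G(X)$ are not defined by your formulas but through the structure of $H^2(X,\IZ)$ as a $\IZ[G]$-module, using the classification of integral representations of a cyclic group of prime order: after localizing at $p$, the lattice splits into trivial summands $\IZ_p$, cyclotomic summands $\IZ_p[\zeta_p]$ and regular summands $\IZ_p[G]$, with $a$ counting the regular summands and $m$ the regular plus cyclotomic ones, from which the index and discriminant statements follow. That route buys finer module-theoretic information exploited elsewhere in \cite{BNWS} (notably for the Smith-theory bounds behind Theorem~\ref{BNWS}); your route buys brevity, self-containedness, and transparency about where $p\neq 2$ enters. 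Indeed the one genuinely delicate step, which you correctly isolate, is forcing $(A_S)_2=0$, and your mechanism --- $\det(\sigma^*-\id|_S)=\pm p^m$ is a $2$-adic unit, so $\sigma^*-\id$ is invertible on $(A_S)_2$ while any action on $\IZ/2\IZ$ is trivial --- is precisely what breaks for $p=2$, consistent with Lemma~\ref{2elemdiscgrp} of the paper, where the $\IZ/2\IZ$ can sit on either side. Two points you should make explicit to be complete: the assertion $(S\otimes\IQ)\cap(T\otimes\IQ)=0$ is not a tautology but follows from nondegeneracy of the form on $T\otimes\IQ$ (e.g. by averaging: $\langle t,Nx\rangle=p\langle t,x\rangle$ for $t\in T$, $x\in L$); and the positivity of $m_G(X)$ rests on the faithfulness of $\Aut(X)\to O(H^2(X,\IZ))$ for \ihsk (Beauville), which you invoke but which deserves a citation.
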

  
We denote by $H^*(X^G,\IF_p)$ the cohomology of the fixed locus $X^G$ with coefficients in $\IF_p$ and we set $m:=m_G(X)$, $a:=a_G(X)$, $h^*(X^G, \IF_p)=\sum_{i\geq 0} h^i(X^G,\IF_p)$,
where $h^i(X^G,\IF_p):=\dim H^i(X^G,\IF_p)$. 

\begin{theorem}\cite[Theorem 6.15]{BNWS}\label{BNWS}
Let $X$ be an \ihsk and $G$ be a group of automorphisms of prime order $p$ on $X$ with $3\leq p\leq 19$, $p\not=5$. Then:
\begin{eqnarray}\label{fp}
\qquad h^*(X^G,\IF_p)=324-2a(25-a)-(p-2)m(25-2a)+\frac{1}{2}m((p-2)^2m-p)
\end{eqnarray}
with $2\leq (p-1)m<23$, $0\leq a\leq \min\{(p-1)m,~23-(p-1)m\}$ . 
\end{theorem}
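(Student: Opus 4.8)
The plan is to compute $h^*(X^G,\IF_p)$ from the $\IF_p[G]$-module structure of the \emph{total} cohomology $H^*(X,\IF_p)$ by means of Smith theory, and then to read off that module structure from the lattice invariants $m$ and $a$. First I would record the topological input. Since $X$ is an \ihsk its rational Betti numbers coincide with those of $\Hilb^2$ of a K3 surface, namely $b_0=b_8=1$, $b_2=b_6=23$, $b_4=276$ and $b_{\mathrm{odd}}=0$; as the integral cohomology carries no $p$-torsion for $p\geq 3$, we get $\dim_{\IF_p}H^*(X,\IF_p)=\sum_i b_i=324$, which is the source of the leading term. I would then use the ring structure: for the $K3^{[2]}$ deformation type the cup product gives an $\Aut(X)$-equivariant injection $\Sym^2 H^2(X)\hookrightarrow H^4(X)$ (Verbitsky), and since $\dim\Sym^2 H^2=\binom{24}{2}=276=b_4$ it is an isomorphism. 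Combined with Poincar\'e duality $H^6\cong (H^2)^\vee$ and the triviality of $H^0,H^8$, this reduces the entire module structure of $H^*(X,\IF_p)$ to that of $H^2(X,\IF_p)$.

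Next I would invoke the Borel localization theorem for the action of $G=\IZ/p\IZ$. Writing $\IF_p[G]=\IF_p[t]/(t^p)$ with $t=\sigma^*-\id$, the indecomposables are the Jordan blocks $M_1,\dots,M_p$, where $M_p$ is the free (regular) module. The Tate cohomology $\hat H^*(G,M_k)$ is one-dimensional in each degree for $k<p$ and vanishes for $k=p$; hence after localizing $H^*_G(X,\IF_p)$ at the polynomial generator of $H^*(BG,\IF_p)$, each non-free summand contributes rank one and each free summand contributes zero, and by localization this rank equals $\dim_{\IF_p}H^*(X^G,\IF_p)$. This yields the clean principle
\[
h^*(X^G,\IF_p)=\#\{\text{indecomposable non-free summands of }H^*(X,\IF_p)\}.
\]

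Then I would pin down $H^2(X,\IF_p)$ from Proposition~\ref{prop:SandT}. On $H^2(X,\IZ)$ the automorphism acts trivially on the invariant lattice $T$ of rank $t:=23-(p-1)m$ and without nonzero fixed vectors on $S$ of rank $(p-1)m$, with $S\otimes\IQ\cong\IQ(\zeta_p)^{\oplus m}$ for a primitive $p$-th root $\zeta_p$, the two being glued along $A_S\cong(\IZ/p\IZ)^{\oplus a}$. Reducing the corresponding $\IZ[G]$-lattice modulo $p$, each $\IZ[\zeta_p]$ reduces to $M_{p-1}$, each trivial summand to $M_1$, and each unit of glue fuses one trivial and one moving generator into a free block $M_p$, so that
\[
H^2(X,\IF_p)\cong M_1^{\oplus(t-a)}\oplus M_{p-1}^{\oplus(m-a)}\oplus M_p^{\oplus a}.
\]
By the first paragraph $H^6$ is of the same type, $H^0$ and $H^8$ contribute one $M_1$ each, and $H^4\cong\Sym^2 H^2(X,\IF_p)$.

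Finally I would decompose the symmetric square using the characteristic-$p$ tensor rules: $M_1$ is a unit for $\otimes$, tensoring with $M_p$ produces free modules, and the two plethysms $\Sym^2 M_{p-1}\cong M_p^{\oplus(p-1)/2}$ and $M_{p-1}\otimes M_{p-1}\cong M_1\oplus M_p^{\oplus(p-2)}$, which follow from the $2$-periodicity of Tate cohomology for $\IZ/p\IZ$ (i.e. $M_{p-1}\simeq\Omega^{\pm1}(\IF_p)$ in the stable module category, whence the self-tensor is $\simeq\Omega^{\pm2}(\IF_p)\simeq\IF_p$, the free parts then being fixed by dimension count). Collecting only the non-free blocks gives $\binom{t-a+1}{2}+(t-a)(m-a)+\binom{m-a}{2}$ from $H^4$, $(t-a)+(m-a)$ each from $H^2$ and $H^6$, and $1$ each from $H^0,H^8$; summing and substituting $t=23-(p-1)m$ reproduces, after simplification, the displayed closed form (one checks small cases such as $(p,m,a)=(3,1,0)$ giving $298$ as a sanity test). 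The hard part is the third step: one must show that the mod-$p$ reduction of the $\IZ[G]$-lattice $H^2(X,\IZ)$ has exactly $a$ free summands and no exotic blocks $M_k$ with $1<k<p-1$, which requires controlling the $p$-adic structure of the glued pair $(T,S)$ and is precisely where the arithmetic of $p$ enters; once this module type is fixed, the remaining plethysm and bookkeeping are routine, and the inequalities $2\leq (p-1)m<23$, $0\leq a\leq\min\{(p-1)m,\,23-(p-1)m\}$ are exactly the ranges in which such lattices $S,T$ exist.
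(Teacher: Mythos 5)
Your overall architecture is the same as that of the cited source \cite{BNWS} (the paper under review does not reprove this theorem, it quotes it): reduce everything to the $\IF_p[G]$-module structure of $H^*(X,\IF_p)$, compute $H^2(X,\IF_p)\cong M_1^{\oplus(t-a)}\oplus M_{p-1}^{\oplus(m-a)}\oplus M_p^{\oplus a}$ with $t\coloneqq\rank T$ from the lattice data, pass to $H^4$ via $\Sym^2$, and count non-free summands via Smith theory; your plethysm formulas and the final bookkeeping are correct and do reproduce the displayed polynomial. However, there are two genuine gaps. The decisive one is the step ``$H^4(X,\IF_p)\cong\Sym^2H^2(X,\IF_p)$''. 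Verbitsky's injection together with $\dim\Sym^2H^2=276=b_4$ gives an isomorphism over $\IQ$, but not over $\IZ$ or $\IF_p$: an injection of free $\IZ$-modules of equal rank only has finite index, and here the index is divisible by $5$. Indeed $H^4(X,\IZ)$ is torsion-free and unimodular, while the Fujiki form gives $|\det|$ of $\Sym^2 H^2(X,\IZ)$ equal to $2^{46}\cdot 5^{2}$, whence $[H^4(X,\IZ):\Sym^2H^2(X,\IZ)]=2^{23}\cdot 5$. Consequently the mod-$5$ reduction of the comparison map has a kernel, and your identification of the $G$-module $H^4(X,\IF_5)$ is false. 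This is precisely where the hypothesis $p\neq 5$ (and the separate treatment of $p=2$) must enter; your argument never invokes $p\neq 5$ and would therefore ``prove'' the $p=5$ case as well, which the theorem excludes and which, as the remark following Table~\ref{ord5} explains, is known only for natural automorphisms. A correct proof must verify $p\nmid[H^4(X,\IZ):\Sym^2H^2(X,\IZ)]$ before transporting the module structure to $H^4$.

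The second gap is your ``clean principle'' $h^*(X^G,\IF_p)=\#\{\text{non-free summands of }H^*(X,\IF_p)\}$. Localization only gives the inequality $\leq$: $h^*(X^G,\IF_p)$ is the rank of the localized equivariant (or Tate) cohomology, which is bounded above by the rank of the localized $E_2$-term, with equality if and only if the relevant spectral sequence degenerates after localization. Degeneration is not automatic: for the antipodal involution on $S^2$ the block count is $2$ while $X^G=\emptyset$; and vanishing of the odd cohomology does not formally kill the differentials $d_3,d_5,\dots$ for odd $p$ either. Establishing the equality under the hypotheses at hand is the substantial Smith-theoretic content of \cite{BNWS}, not a formal consequence of localization, so it cannot be asserted as a principle. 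Finally, a smaller point: the step you single out as ``the hard part'' --- that $H^2(X,\IF_p)$ has exactly $a$ free blocks and no exotic $M_k$ with $1<k<p-1$ --- is in fact the routine part: it follows from $H^2(X,\IZ)/(T\oplus S)\cong(\IZ/p\IZ)^{\oplus a}$ (Proposition~\ref{prop:SandT}) together with the classification of lattices over the $p$-adic group ring of a cyclic group of order $p$, whose only indecomposables are the trivial lattice, the ring of cyclotomic integers, and the regular representation. The real difficulties are the two points above.
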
   

\begin{rem}\text{} 
\begin{itemize}
\item It follows from \cite[Proposition 5.17]{BNWS} that the fixed locus $X^G$ is never empty if $p\neq 2$. So one cannot
produce new examples of Enriques varieties (see \cite{BNWSenriques,OS}) using finite quotients of \ihsk other than quotients by (special) involutions.
\item If the group $G$ acts on the K3 surface $\Sigma$, it induces a natural action on $\Sigma^{[2]}$. 
One can similarly define the integers $a_G(\Sigma)$, $m_G(\Sigma)$ and it is easy to check that $a_G(\Sigma)=a_G(\Sigma^{[2]})$ and $m_G(\Sigma)=m_G(\Sigma^{[2]})$ (see \cite[Remark~5.16 (2)]{BNWS}).
For any $\sigma\in G$, considered as an automorphism of $\Sigma$, we denote by $\sigma^{[2]}$ the automorphism induced on $\Sigma^{[2]}$.
These automorphisms are called \emph{natural} in~\cite{sam} but we will use this term in a more general sense in Definition~\ref{nat}.
\end{itemize} 
\end{rem}
 
The topological Lefschetz fixed point formula gives a complementary information on the fixed locus $X^G$. Denote by $\chi(X^G):=\sum_i (-1)^i\dim H^i(X^G,\IR)$ the Euler characteristic of $X^G$. If $\sigma$ is a generator of $G$, one has:
$$
\chi(X^G)=\sum_{i\geq 0}(-1)^i\trace(\sigma^*_{|H^i(X,\IR)}).
$$
Since $X$ has real dimension $8$ and  trivial odd cohomology, using Poincar\'e duality the formula rewrites:
$$
\chi(X^G)=2+2\trace(\sigma^*_{|H^2(X,\IR)})+\trace(\sigma^*_{|H^4(X,\IR)}).
$$
Setting $r:=\rank T$ one sees easily that $\trace(\sigma^*_{|H^2(X,\IR)})=r-m$. 

\begin{lemma} One has
$\trace(\sigma^*|_{H^4(X,\IR)})= \frac{(m-r)(m-r-1)}{2}$.
\end{lemma}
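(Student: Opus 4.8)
The plan is to identify the $\sigma^*$-representation on $H^4(X,\IR)$ with the second symmetric power of the representation on $H^2(X,\IR)$, and thereby reduce the computation to two character values on $H^2$. The only nontrivial geometric input will be the symmetric-power description of $H^4$; everything else is linear algebra together with the eigenvalue structure of $\sigma^*$ on $S$ and $T$.

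First I would invoke the theorem of Verbitsky (together with the known Betti numbers of the deformation family) stating that for $X$ an \ihsk the cup product induces an $\Aut(X)$-equivariant isomorphism
$$
\Sym^2 H^2(X,\IR)\xrightarrow{\ \sim\ } H^4(X,\IR).
$$
Indeed, Verbitsky's result gives injectivity of this map, and since $b_4(X)=276=\binom{24}{2}=\dim\Sym^2 H^2(X,\IR)$ it is then an isomorphism; equivariance is immediate from the functoriality of cup product, $\sigma^*(\alpha\cup\beta)=\sigma^*\alpha\cup\sigma^*\beta$. It follows that
$$
\trace(\sigma^*|_{H^4(X,\IR)})=\trace\left(\Sym^2\!\left(\sigma^*|_{H^2(X,\IR)}\right)\right).
$$

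Next I would apply the elementary identity $\trace(\Sym^2 A)=\tfrac12\big((\trace A)^2+\trace(A^2)\big)$, valid for any endomorphism $A$ of a finite-dimensional space, which follows by writing the eigenvalues of $\Sym^2 A$ as the products $\lambda_i\lambda_j$ with $i\leq j$. Taking $A=\sigma^*|_{H^2(X,\IR)}$ reduces the lemma to the two traces $\trace(\sigma^*|_{H^2})$ and $\trace((\sigma^*)^2|_{H^2})$. The first is recorded just above the statement: $\trace(\sigma^*|_{H^2})=r-m$. For the second, I would observe that $\sigma^*$ acts as the identity on $T\otimes\IR$ (contributing $r$), while on $S\otimes\IC$ its minimal polynomial is the cyclotomic polynomial $\Phi_p$, since $\sigma^p=\id$ and $S$ carries no nonzero $\sigma^*$-invariant vector; hence $S\otimes\IQ$ is a free $\IQ(\zeta_p)$-module and each primitive $p$-th root of unity is an eigenvalue of $\sigma^*|_S$ with multiplicity exactly $m=\rank S/(p-1)$. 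Because $p$ is odd, $\zeta\mapsto\zeta^2$ permutes the primitive $p$-th roots of unity, so $(\sigma^*)^2|_S$ has the same eigenvalues with the same multiplicities, giving $\trace((\sigma^*)^2|_S)=m\sum_{k=1}^{p-1}\zeta_p^{k}=-m$. Therefore $\trace((\sigma^*)^2|_{H^2})=r-m$ as well.

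Setting $t:=r-m$, the symmetric-power identity then yields
$$
\trace(\sigma^*|_{H^4(X,\IR)})=\tfrac12\big(t^2+t\big)=\frac{(m-r)(m-r-1)}{2},
$$
which is the claim. I expect the only delicate point to be the justification of the isomorphism $\Sym^2 H^2\cong H^4$ and its equivariance (the dimension count and functoriality must be stated carefully); the small but crucial observation is that $\trace((\sigma^*)^2|_{H^2})$ again equals $r-m$, which is precisely what forces the two contributions to coincide and produces the clean quadratic formula.
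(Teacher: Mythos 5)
Your proof is correct, and it rests on the same key input as the paper: Verbitsky's equivariant isomorphism $H^4(X,\IR)\cong\Sym^2 H^2(X,\IR)$. Where you differ is in how the trace of the symmetric square is extracted. The paper writes out the $G$-module decomposition $H^2(X,\IR)\cong\IR^{\oplus r}\oplus\bigoplus_{i=1}^{p-1}V_{\xi^i}^{\oplus m}$, expands $\Sym^2$ of this sum into trivial summands, the pieces $\Sym^2\bigl(V_{\xi^i}^{\oplus m}\bigr)$ and the mixed tensor products, and then evaluates the resulting root-of-unity sums term by term. You instead invoke the universal identity $\trace(\Sym^2 A)=\tfrac12\bigl((\trace A)^2+\trace(A^2)\bigr)$, which reduces everything to the two character values $\trace(\sigma^*|_{H^2})$ and $\trace((\sigma^*)^2|_{H^2})$; your observation that both equal $r-m$ (because squaring permutes the primitive $p$-th roots of unity when $p$ is odd) then yields the formula at once. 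Your route is shorter and less error-prone, since it avoids the double sum over pairs of eigenvalues; the paper's explicit decomposition has the side benefit of recording the full $G$-module structure of $H^4(X,\IR)$ rather than just its character value at $\sigma$. The geometric content is identical in both arguments: the action is trivial on $T$, and on $S\otimes\IC$ each primitive $p$-th root of unity occurs with multiplicity exactly $m$, which is the content of Proposition~\ref{prop:SandT} and which you correctly rederive from the irreducibility of the cyclotomic polynomial over $\IQ$.
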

\begin{proof} 
Denote by $\xi$ a primitive $p$-th root of the unity and by $V_{\xi^i}$ the one-dimensional representation
of $G$ with character $\xi^i$, for $i=1,\ldots,p-1$. Then, as a representation of $G$: 
$$
H^2(X,\IR)\cong \IR^{\oplus r}\oplus\bigoplus\limits_{i=1}^{p-1} V_{\xi^i}^{\oplus m}
$$
where $\IR=V_{\xi^0}$ stands for the trivial representation. Since $H^4(X,\IR)\cong\Sym^2H^2(X,\IR)$ (cf. \cite[Theorem~1.3]{Verbitsky}) one gets:
\begin{align*}
\Sym^2H^2(X,\IR)&\cong \IR^{\oplus \frac{r(r+1)}{2}}\oplus\bigoplus\limits_{i=1}^{p-1} V_{\xi^i}^{\oplus rm}
\oplus \bigoplus\limits_{i=1}^{p-1} \Sym^2\left(V_{\xi^i}^{\oplus m}\right) \\
&\quad\oplus\bigoplus\limits_{i=1}^{p-1}\bigoplus\limits_{j=i+1}^{p-1} \left(V_{\xi^i}^{\oplus m}\otimes V_{\xi^j}^{\oplus m}\right)
\end{align*}
Since $\Sym^2 \left(V_{\xi^i}^{\oplus m}\right)\cong V_{\xi^{2i}}^{\oplus\frac{m(m+1)}{2}}$
and $V_{\xi^i}^{\oplus m}\otimes V_{\xi^j}^{\oplus m}\cong V_{\xi^{i+j}}^{\oplus m^2}$ one gets
\begin{align*}
\trace\left(\sigma^\ast_{|H^4(X,\IR)}\right) &= \frac{r(r+1)}{2}+\left(\sum_{i=1}^{p-1} \xi^i\right) rm + \left(\sum_{i=1}^{p-1}\xi^{2i}\right) \frac{m(m+1)}{2}\\
&\quad +\left(\sum_{i=1}^{p-1}\sum_{j=i+1}^{p-1} \xi^{i+j}\right) m^2\\
&= \frac{r(r+1)}{2}-rm-\frac{m(m+1)}{2}+m^2= \frac{(m-r)(m-r-1)}{2}
\end{align*}
\end{proof}

Using the fact that $r=23-(p-1)m$ we obtain:
\begin{cor} The Euler characteristic of the fixed locus satisfies:
\begin{eqnarray}\label{chi}
\chi(X^G)=324-\frac{51}{2}mp+\frac{1}{2}m^2p^2.
\end{eqnarray}
\end{cor}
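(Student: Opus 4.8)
The plan is to assemble the three ingredients already in hand and then carry out a single algebraic simplification. Everything rests on the rewritten Lefschetz formula
\begin{equation*}
\chi(X^G)=2+2\,\trace(\sigma^*_{|H^2(X,\IR)})+\trace(\sigma^*_{|H^4(X,\IR)}),
\end{equation*}
which holds because $X$ is a real $8$-manifold with trivial odd cohomology, so that Poincar\'e duality collapses the Lefschetz sum $\sum_i(-1)^i\trace(\sigma^*_{|H^i(X,\IR)})$ into the contributions from degrees $0$, $2$ and $4$ only.

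First I would insert the two trace values. The degree-two contribution has already been recorded: decomposing $H^2(X,\IR)\cong\IR^{\oplus r}\oplus\bigoplus_{i=1}^{p-1}V_{\xi^i}^{\oplus m}$ and using $\sum_{i=1}^{p-1}\xi^i=-1$ gives $\trace(\sigma^*_{|H^2(X,\IR)})=r-m$. The degree-four contribution is exactly the content of the Lemma just proved, namely $\trace(\sigma^*_{|H^4(X,\IR)})=\tfrac12(m-r)(m-r-1)$. Substituting both into the Lefschetz formula expresses the Euler characteristic purely in terms of the integers $r$ and $m$:
\begin{equation*}
\chi(X^G)=2+2(r-m)+\frac{(m-r)(m-r-1)}{2}.
\end{equation*}

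The final step is to eliminate $r$ by means of the rank of the invariant lattice. In the non-symplectic case Proposition~\ref{prop:SandT} gives $T$ the signature $(1,22-(p-1)m)$, so that $r=\rank T=23-(p-1)m$ and hence $m-r=pm-23$. Writing $u:=m-r=pm-23$, the displayed expression becomes $2-2u+\tfrac12 u(u-1)$. Expanding $u(u-1)=(pm-23)(pm-24)=p^2m^2-47pm+552$ and collecting terms yields the constant $2+46+276=324$ (using $23\cdot 24=552$), the $pm$-coefficient $-2-\tfrac{47}{2}=-\tfrac{51}{2}$, and the quadratic term $\tfrac12 p^2m^2$, giving $324-\tfrac{51}{2}mp+\tfrac12 m^2p^2$, which is the claimed formula.

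I do not expect any genuine obstacle here: the statement is a formal consequence of the preceding Lemma together with the lattice identity $r=23-(p-1)m$, and the only points to watch are the sign bookkeeping when passing from $r-m$ to $-(pm-23)$ and the routine arithmetic of the constant term. No new geometric input is required.
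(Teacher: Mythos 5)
Your proposal is correct and follows exactly the paper's route: the paper likewise assembles the rewritten Lefschetz formula with the degree-two trace $r-m$ and the Lemma's value $\tfrac12(m-r)(m-r-1)$, then substitutes $r=23-(p-1)m$ and simplifies. Your arithmetic (with $u=pm-23$, constant $2+46+276=324$, coefficient $-2-\tfrac{47}{2}=-\tfrac{51}{2}$) checks out, so there is nothing to add.
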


We deduce one further relation between the parameters $a$ and $m$:
\begin{cor}\label{inequality}
One has $a\leq m$.
\end{cor}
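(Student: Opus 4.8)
The plan is to compare the two formulas already obtained for the fixed locus, namely the mod-$p$ total Betti number $h^*(X^G,\IF_p)$ from \eqref{fp} and the Euler characteristic $\chi(X^G)$ from \eqref{chi}, and to extract the inequality $a\leq m$ from the elementary topological bound $h^*(X^G,\IF_p)\geq\chi(X^G)$. This bound holds because, by the universal coefficient theorem, each $\IF_p$-Betti number dominates the corresponding rational Betti number, so
\[
h^*(X^G,\IF_p)\geq\sum_i\dim_\IR H^i(X^G,\IR)\geq\sum_i(-1)^i\dim_\IR H^i(X^G,\IR)=\chi(X^G).
\]
Hence the difference $h^*(X^G,\IF_p)-\chi(X^G)$ is non-negative, and the whole argument reduces to understanding this difference as a polynomial in $a$ and $m$.

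First I would subtract \eqref{chi} from \eqref{fp} and collect terms. The constant $324$ cancels, the purely $a$-terms give $2a^2-50a$, the mixed term is $2(p-2)ma$, the terms linear in $m$ combine to $50m$, and the $m^2$-terms combine to $-2(p-1)m^2$ (the crucial point being the cancellation of $p^2$ between $\tfrac12 m^2p^2$ and $\tfrac12(p-2)^2m^2$). This yields
\[
h^*(X^G,\IF_p)-\chi(X^G)=2a^2+2(p-2)ma-2(p-1)m^2-50a+50m.
\]
The expected simplification is that the quadratic part factors: the equation $t^2-(p-2)t-(p-1)=0$ has discriminant $(p-2)^2+4(p-1)=p^2$, a perfect square, with roots $p-1$ and $-1$, so $a^2+(p-2)ma-(p-1)m^2=(a-m)(a+(p-1)m)$. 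After factoring out $a-m$ from the linear part as well, one obtains
\[
h^*(X^G,\IF_p)-\chi(X^G)=(a-m)\bigl(2a+2(p-1)m-50\bigr).
\]

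Finally I would analyse the sign of the second factor using the numerical constraints of Theorem~\ref{BNWS}. Since $a\leq 23-(p-1)m$, we get $2a+2(p-1)m\leq 46$, hence $2a+2(p-1)m-50\leq-4<0$. As the product is non-negative while its second factor is strictly negative, the first factor must satisfy $a-m\leq0$, that is $a\leq m$, as claimed. I expect the main delicate point to be the justification of $h^*(X^G,\IF_p)\geq\chi(X^G)$, since the two quantities are computed with different coefficients and the equality of their difference with a product of linear forms only becomes useful once the inequality is secured; by contrast, the factorisation itself is a routine determinant computation driven by the perfect-square discriminant $p^2$.
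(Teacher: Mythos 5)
Your proof is correct and takes essentially the same route as the paper: the paper also derives $h^*(X^G,\IF_p)-\chi(X^G)\geq 0$ from the universal coefficient theorem, computes the difference of \eqref{fp} and \eqref{chi} as $2(a-m)(a-25+mp-m)$ (identical to your factorisation $(a-m)\bigl(2a+2(p-1)m-50\bigr)$), and concludes from the bound $a\leq 23-(p-1)m$ of Theorem~\ref{BNWS} that the second factor is negative, forcing $a\leq m$. The only blemish is the auxiliary quadratic you quote, $t^2-(p-2)t-(p-1)$, whose middle sign is off for $t=a/m$ (it should be $t^2+(p-2)t-(p-1)$, with roots $1$ and $-(p-1)$), but the factorisation you actually use is the correct one.
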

\begin{proof}
By the universal coefficient theorem we have that $H^i(X^G,\IZ)\otimes \IF_p$ injects in $H^i(X^G,\IF_p)$. Since $h^i(X^G,\IR)$ equals the rank of the free part of  $H^i(X^G,\IZ)$ it follows that $h^i(X^G,\IR)\leq h^i(X^G,\IF_p)$ for all $i$. So we get  $h^\ast(X^G,\IF_p)-\chi(X^G)\geq 0$.
Combining equations \eqref{fp} and \eqref{chi} we get:
$$
h^*(X^G,\IF_p)-\chi(X^G)=2(a-m)(a-25+mp-m).
$$
By Theorem \ref{BNWS} we have $a-25+mp-m< 0$, hence $a\leq m$.
\end{proof}

%%%%%%%%%%%%%%%%%%%%%%%%%%%%%%%%%%%%%%%%%%%%%%

\subsection{Computation of the invariant lattice}

Let $X$ be an \ihsk with a non-symplectic action of a group $G=\langle \sigma \rangle$  of prime order with $3\leq p\leq 19$, $p\not=5$. 
Recall that $H^2(X,\IZ)$ is isometric to the lattice $L=U^{\oplus 3}\oplus E_8^{\oplus 2}\oplus \langle -2\rangle$,  $T$ is the invariant sublattice of $H^2(X,\IZ)$ and $S$ is its orthogonal complement.

For each value of $p$, combining Proposition~\ref{prop:SandT}, the formulas \eqref{fp}, \eqref{chi} and Corollary~\ref{inequality}
we get all the possible values of $m:=m_G(X),a:=a_G(X)$ and we compute the values of $\chi:=\chi(X^G)$ and $h^*:=h^*(X^G,\IF_p)$. 
By Proposition~\ref{prop:SandT}, the lattice $S$ has signature $(2,(p-1)m-2)$ and is $p$-elementary 
with discriminant group $(\IZ/p\IZ)^{\oplus a}$ and the lattice $T$ has signature $(1,22-(p-1)m)$ 
and discriminant group $\IZ/2\IZ\oplus (\IZ/p\IZ)^{\oplus a}$. Considering $T$ and $S$ as sublattices of $L$ we call a triple $(p,m,a)$ \emph{admissible}
if such sublattices of $L$ with these invariants do exist. In this case, we compute their isometry class. We prove the following result:

\begin{theorem} \label{th:lattice} For every admissible value of $(p,m,a)$, there exists 
a unique even $p$-elementary lattice $S$ of signature $(2,(p-1)m-2)$ with $A_S=(\IZ/p\IZ)^{\oplus a}$. 
This lattice admits a primitive embedding in $L$, this embedding is unique if $(p,m,a)\notin\{(3,10,2),~(3,8,6),~(11,2,2)\}$, and its orthogonal complement $T$ in $L$ 
is uniquely determined by the signature $(1,22-(p-1)m)$ and the discriminant group 
$A_T=(\IZ/2\IZ)\oplus (\IZ/p\IZ)^{\oplus a}$. 
\end{theorem}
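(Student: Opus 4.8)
The plan is to prove the three assertions---existence and uniqueness of $S$, the embedding count, and the determination of $T$---by reducing everything to the lattice-theoretic machinery assembled in Section~2, applied to the finite list of admissible triples $(p,m,a)$. First I would establish the existence and uniqueness of $S$. By Proposition~\ref{prop:SandT} the lattice $S$ must be even, $p$-elementary of signature $(2,(p-1)m-2)$ with $A_S=(\IZ/p\IZ)^{\oplus a}$, so it is indefinite (since $(p-1)m-2>0$ whenever $(p-1)m\geq 4$, which holds for all admissible triples as $(p-1)m\geq 2$ with equality ruled out by the signature $(2,\cdot)$ requiring $(p-1)m-2\geq 0$ and rank $\geq 3$). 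For $p\geq 3$ and $\rank S=(p-1)m\geq 3$, Theorem~\ref{indefigen} asserts that $S$ is uniquely determined by its signature and discriminant form, so existence of \emph{some} lattice with these invariants yields uniqueness immediately; existence itself follows from Theorem~\ref{Ruda}(2), whose numerical conditions on $(r,a)$ must be checked to hold for each admissible triple. This is the step where I expect to simply verify that admissibility of $(p,m,a)$---which encodes that such sublattices of $L$ exist---forces the arithmetic constraints of Theorem~\ref{Ruda}, so no genuine obstruction arises here.

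Next I would handle the primitive embedding of $S$ into $L=U^{\oplus 3}\oplus E_8^{\oplus 2}\oplus\langle -2\rangle$ and the determination of $T$. This is precisely the setting of Proposition~\ref{discform}: since $S$ is even $p$-elementary with $p\neq 2$, once a primitive embedding exists, the orthogonal complement $T$ has discriminant group $(\IZ/p\IZ)^{\oplus a}\oplus\IZ/2\IZ$ and discriminant form $(-q_S)\oplus q_L$, matching the $A_T$ prescribed in Proposition~\ref{prop:SandT}. The existence of a primitive embedding of $S$ into $L$ I would deduce from Nikulin's criterion \cite[Theorem 1.12.2]{Nikulinintegral}: the inequality $\rank(L)-\rank(S)=23-(p-1)m\geq \ell(A_S)+1=a+1$, together with the signature compatibility $s_{(+)}(S)=2<3$ and $s_{(-)}(S)=(p-1)m-2<20$, suffices for each admissible triple (and $23-(p-1)m=\rank T\geq 2$ gives room). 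The uniqueness clause of Proposition~\ref{discform} then applies under the hypotheses $s_{(+)}<3$, $s_{(-)}<20$ and $a\leq 21-\rank(S)=21-(p-1)m$, giving both that $T$ is determined up to isometry (via Theorem~\ref{kneser}, as $\ell(A_T)=a\leq\rank(T)-2$) and that the embedding is unique up to $O(L)$.

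The main obstacle is the \emph{exceptional triples} $(3,10,2)$, $(3,8,6)$ and $(11,2,2)$, where uniqueness of the embedding fails. For these I would check that the inequality $a\leq 21-(p-1)m$ used in Proposition~\ref{discform} is \emph{violated}: indeed for $(3,10,2)$ one has $(p-1)m=20$ so $21-(p-1)m=1<2=a$; for $(3,8,6)$, $(p-1)m=16$ so $21-(p-1)m=5<6=a$; and for $(11,2,2)$, $(p-1)m=20$ so $21-(p-1)m=1<2=a$. In each case the length of $A_T$ is too large relative to $\rank T$ for Theorem~\ref{kneser} to force uniqueness, and correspondingly the surjectivity of $O(T)\to O(A_T)$ needed to collapse the choices of $\gamma_T$ can fail, so genuinely distinct embeddings may occur. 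The point for the theorem is only that these are exactly the triples \emph{excluded} from the uniqueness claim, so the work here is to confirm the numerical failure rather than to classify the embeddings; one still notes that even for these triples $T$ remains determined by its signature and discriminant group when $\ell(A_T)\leq\rank(T)-2$ holds, which I would verify separately (this is why the theorem still asserts uniqueness of $T$ even in the exceptional cases, as $T$ is then indefinite of sufficient rank). The remainder is the routine but essential tabulation: enumerate all admissible $(p,m,a)$ from the constraints in Theorem~\ref{BNWS} and Corollary~\ref{inequality}, and confirm the hypotheses above hold uniformly outside the three listed triples.
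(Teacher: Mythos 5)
Your proposal reproduces the paper's general skeleton (uniqueness of $S$ via lattice classification, then Proposition~\ref{discform} for $T$ and the embedding), but it breaks down exactly at the boundary cases that the paper must treat by hand, and several of your blanket numerical claims are false there. First, $(3,1,1)$ is admissible with $S=A_2(-1)$, which is \emph{positive definite} of rank $2$: your assertion that every admissible triple has $\rank S\geq 3$ with $S$ indefinite is wrong, so Theorem~\ref{indefigen} cannot give uniqueness of $S$ in this case (the paper does a direct computation); note also that Theorem~\ref{Ruda}(2) concerns \emph{hyperbolic} lattices, so it never applies verbatim to $S$ of signature $(2,\cdot)$ --- one must split $S=U\oplus W$ first, as the paper does. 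Second, and more seriously, $(3,11,1)$ is admissible with $\rank S=22$, $T=\langle 6\rangle$ of rank $1$. Your embedding-existence inequality $23-(p-1)m\geq a+1$ reads $1\geq 2$ and your uniqueness hypothesis $a\leq 21-\rank S$ reads $1\leq -1$: both fail, yet $(3,11,1)$ is \emph{not} among the three excluded triples, so the theorem asserts uniqueness of the embedding for it. The paper proves this by checking by hand that $O(T)\to O(A_T)$ is surjective for $T=\langle 6\rangle$; your plan has no route to this case. Moreover, Nikulin's Theorem 1.12.2 concerns primitive embeddings into even \emph{unimodular} lattices, whereas $L$ has discriminant group $\IZ/2\IZ$; existence of embeddings into $L$ must go through Proposition 1.15.1, i.e.\ through the existence of a complement $T$ realizing the prescribed signature and discriminant form, which is not a numerical formality.

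There are two further genuine gaps. (a) For the three excluded triples you propose to ``verify separately'' that $\ell(A_T)\leq\rank(T)-2$ holds; it does not --- that failure is precisely why they are exceptional: for $(3,10,2)$ and $(11,2,2)$ one has $\ell(A_T)=2>\rank(T)-2=1$, and for $(3,8,6)$ one has $6>5$. The theorem nevertheless asserts that $T$ is determined by its signature and discriminant group in these cases, and this requires a tool you never invoke: the paper applies the Conway--Sloane criterion (Theorem~\ref{cs}) to show that the genus of $T$ contains a single isometry class. (b) Your final tabulation step enumerates admissible triples from Theorem~\ref{BNWS} and Corollary~\ref{inequality} alone, but those constraints are only necessary: $(3,9,5)$ satisfies all of them ($a=5\leq m=9$ and $a\leq\min\{18,5\}$), yet it is not admissible. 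Excluding it is the one genuinely delicate point of the paper's proof: a $3$-adic computation shows that no lattice of signature $(1,4)$ with discriminant form $(-q_S)\oplus q_L$ exists, hence $S$ admits no primitive embedding in $L$ at all. Since your sufficient criterion merely \emph{fails} at $(3,9,5)$ rather than disproving existence, your argument cannot decide this case, and an enumeration based on the numerical constraints would wrongly list it as admissible, where every clause of the theorem would then be false.
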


\begin{proof} The proof follows from a case-by-case analysis. We use Theorem~\ref{Niki} and Theorem~\ref{Ruda} on the existence of hyperbolic $p$-elementary lattices  to exclude the non-admissible values and determine the isometry class of $S$. The unicity of $T$ and of the embedding of $S$ in $L$ are a direct consequence of Proposition~\ref{discform}. Only a few special cases
require a more specific argument.
We first handle one case in detail to explain the method, then we treat the special cases.
\par The case $(p,m,a)=(3,9,1)$: here $S$ has signature $(2,16)$ and is $3$-elementary with $a=1$. Since $\rank(S)=18\geq 3+\ell(A_S)=4$ by Theorem~\ref{Niki} we can write $S=U\oplus W$ where $W$ is an even hyperbolic $3$-elementary lattice of signature $(1,15)$ and $a(W)=1$. We use Theorem~\ref{Ruda} to compute $W$ and to prove its unicity, this gives also the unicity of $S$, so one computes directly that the only possibility (up to isometry) is $W=U\oplus E_6\oplus E_8$. Finally $1=a\leq 21-\rank(S)$ so  by Proposition~\ref{discform}, $T$ is uniquely determined and the embedding of $S$ in $L$ is unique.

We discuss now the special cases.
 
\par The case $(p,m,a)=(3,11,1)$: one has $T=\langle 6\rangle$. It is easy to check that the homomorphism $O(T)\to O(A_T)$ is surjective so the argument of the proof of Proposition~\ref{discform} applies and $S$ admits a unique embedding in $L$.
\par The case $(p,m,a)=(3,1,1)$: in this case one computes directly that $S=A_2(-1)$ then one concludes with Proposition~\ref{discform}.
\par The case $(p,m,a)\in\{(3,10,2),(3,8,6),(11,2,2)\}$: Here we have $a>\rank(T)-2$.  Using Theorem~\ref{cs} one sees that $T$ is unique in its genus and one deduces its isometry class.

\par The case $(p,m,a)=(3,9,5)$: We prove that this case cannot occur. We compute as before that $S$ is isometric to $U^{\oplus 2}\oplus E_8\oplus E_6^\vee(3)$, so its discriminant group is $A_S=\frac{\IZ}{3\IZ}\left(\frac{2}{3}\right)^{\oplus 5}$. By Proposition~\ref{discform}, if $S$ admits a primitive embedding in $L$, then its orthogonal $T$ has signature $(1,4)$ and discriminant form $\frac{\IZ}{3\IZ}\left(\frac{4}{3}\right)^{\oplus 5}\oplus \frac{\IZ}{2\IZ}\left(\frac{3}{2}\right)$. Assume that such a lattice does exist. Consider its $3$-adic completion $T_3:=T\otimes_\IZ\IZ_3$. One has $A_{T_3}=(A_T)_3$ so $q_{T_3}=\frac{\IZ}{3\IZ}\left(\frac{4}{3}\right)^{\oplus 5}$. The rank of $T_3$ is then equal to $\ell(A_{T_3})$. By Nikulin~\cite[Theorem~1.9.1]{Nikulinintegral}, there exists a unique $3$-adic lattice $K$ of rank $\ell(A_{T_3})$ and discriminant form $q_{{T_3}}$. Then necessarily the determinants of the lattices $T_3$ and~$K$ (computed in some basis) 
differ by an invertible square:
$$
\det T=\det T_3\equiv \det K\mod (\IZ_3^\ast)^2.
$$
One has $\det T=(-1)^4|A_T|=2\cdot 3^5$. Using \cite[Proposition~1.8.1]{Nikulinintegral} one finds that $K=\langle 3\theta\rangle^{\oplus 5}$ with $\theta=\frac{1}{4}\in \IZ_3^\ast$ so $\det K=\left(\frac{3}{4}\right)^5$. The relation $\det T\equiv \det K\mod (\IZ_3^\ast)^2$ gives here $2^{11}\in(\IZ_3^\ast)^2$. This is not true (it would imply that $2$ is a square modulo $3$) so such a lattice $T$ does not exist.
\end{proof}

\begin{rem}
The isometry classes of the lattices $S$ and $T$ for all admissible values of $(p,m,a)$ are summarized
in Appendix~\ref{app:tables}
in the Tables \ref{ord3}, \ref{ord7}, \ref{ord11}, \ref{ord13}, \ref{ord17}, \ref{ord19} corresponding to  $p=3,7,11,13,17,19$.
The excluded values of $(p,m,a)$ are not written in the tables.
\end{rem}

\begin{prop}\label{unicityT}
Under the same assumptions as in Theorem~\ref{th:lattice}, the lattice $T$ admits a unique primitive embedding in the lattice $L$ whose orthogonal complement is $S$.
\end{prop}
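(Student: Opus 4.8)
The plan is to run the argument of Proposition~\ref{discform} with the roles of $S$ and $T$ exchanged, that is, to apply Nikulin's classification of primitive embeddings \cite[Proposition 1.15.1]{Nikulinintegral} to $T$ rather than to $S$. By that result a primitive embedding of $T$ into $L$ is equivalent to the datum of a quintuple $(H_T,H_L,\gamma,S',\gamma_S)$, where $H_T\subseteq A_T$ and $H_L\subseteq A_L=\IZ/2\IZ$ are subgroups, $\gamma\colon H_T\to H_L$ is an isometry with $q_L(\gamma(x))=q_T(x)$, the lattice $S'$ is the orthogonal complement with discriminant group $A_{S'}=\Gamma^\perp/\Gamma$ and form $\left((-q_T)\oplus q_L\right)|_{\Gamma^\perp/\Gamma}$ for $\Gamma$ the graph of $\gamma$, and $\gamma_S\in O(A_{S'})$.

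First I would pin down the gluing. Since $A_T=\IZ/2\IZ\oplus(\IZ/p\IZ)^{\oplus a}$ while $A_L$ carries no $p$-torsion, the subgroup $(\IZ/p\IZ)^{\oplus a}$ can never be glued and always survives in $A_{S'}$. Requiring the orthogonal complement to be the $p$-elementary lattice $S$ therefore forces the $2$-torsion of $A_T$ to be glued isometrically onto all of $A_L$: one must have $H_L=A_L$, $H_T$ equal to the $2$-torsion subgroup of $A_T$, and $\gamma$ the unique isomorphism between them, which is an isometry because both generators take the value $3/2$ (recall $q_T=(-q_S)\oplus q_L$ by Proposition~\ref{discform}). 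With this choice $\Gamma^\perp/\Gamma$ has trivial $2$-part and equals $(\IZ/p\IZ)^{\oplus a}$ with form $-q_T|_p=q_S$, so $S'$ has the invariants of $S$ and hence $S'\cong S$ (by Theorem~\ref{indefigen} in the indefinite case, and by the explicit description in Theorem~\ref{th:lattice} in the definite one). Consequently the quintuple is completely determined except for the choice of $\gamma_S\in O(A_S)$.

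It then remains, by \cite[Lemma~1.4.5]{dolgabourbaki}, to show that all choices of $\gamma_S$ yield isomorphic embeddings, for which it suffices that the homomorphism $O(S)\to O(A_S)$ be surjective. By \cite[Proposition~1.4.7]{dolgabourbaki} this holds whenever $S$ is indefinite and $\ell(A_S)=a\leq\rank S-2=(p-1)m-2$. Using $a\leq m$ from Corollary~\ref{inequality}, the inequality $a\leq(p-1)m-2$ reduces to $(p-2)m\geq 2$, which holds for all admissible triples except $(p,m)=(3,1)$; in each of these cases $S$ is indefinite of rank $(p-1)m\geq 4$, so the criterion applies and the embedding of $T$ is unique. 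I would emphasise that this argument covers in particular the three triples $(3,10,2)$, $(3,8,6)$, $(11,2,2)$ for which the embedding of $S$ was \emph{not} unique in Theorem~\ref{th:lattice}: there the obstruction came from the failure of $O(T)\to O(A_T)$ to be surjective (as $a>\rank T-2$), whereas $a\leq\rank S-2$ still holds comfortably, so the asymmetry disappears when embedding $T$.

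The only remaining case is $(p,m,a)=(3,1,1)$, where $S=A_2(-1)$ is positive definite and the indefinite criterion of \cite[Proposition~1.4.7]{dolgabourbaki} does not apply; this is the step I expect to need a separate, hands-on treatment. Here $A_S=\IZ/3\IZ$, so $O(A_S)=\{\pm1\}$, and the isometry $-\id$ of $A_2(-1)$ induces $-1$ on the discriminant group, so $O(S)\to O(A_S)$ is surjective directly and the conclusion follows as before. The main obstacle is thus not conceptual but organisational: verifying the numerical inequality $(p-2)m\geq 2$ across all admissible triples and disposing by hand of the single low-rank definite exception.
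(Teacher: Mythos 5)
Your proposal is correct and follows essentially the same route as the paper: both apply Nikulin's embedding criterion \cite[Proposition 1.15.1]{Nikulinintegral} to $T$ with the orthogonal complement prescribed to be $S$, observe that the gluing data is forced to be $H_T=H_L=\IZ/2\IZ$ with $\gamma=\id$, and conclude via surjectivity of $O(S)\to O(A_S)$ using \cite[Proposition~1.4.7]{dolgabourbaki}, treating $(p,m,a)=(3,1,1)$ (where $S=A_2(-1)$ is definite) by hand. The only cosmetic differences are that you verify $\rank S\geq\ell(A_S)+2$ numerically from $a\leq m$ rather than by inspecting the tables, and you spell out the forcing of the gluing (the $p$-torsion cannot glue into $A_L$) which the paper states without elaboration.
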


\begin{proof}
The proof is essentially the same in Proposition \ref{discform}. Observe that in this case the orthogonal complement is given and it is isometric to $S$. So a primitive embedding of $T$ into $L$ corresponds to a  quadruple $(H_T,H_L,\gamma,\gamma_S)$. The only possibility is $H_T=H_L=\frac{\IZ}{2\IZ}$ so the only choice is $\gamma=\id$. Finally observe that all the lattices $S$ in the tables except the case $(p,m,a)=(3,1,1)$ have $\rank(S)\geq \ell(S)+2$ so by \cite[Proposition~1.4.7] {dolgabourbaki} the morphism $O(S)\to O(A_S)$ is surjective. In the case $(p,m,a)=(3,1,1)$ one shows the surjectivity by hand. Hence different choices of the isomorphism $\gamma_S$ produce isomorphic embeddings of $T$ in $L$. 
\end{proof}

%%%%%%%%%%%%%%%%%%%%%%%%%%%%%%%%%%%%%%%%%%%%%%

\section{Deformation of automorphisms on IHS manifolds}

Let $X$ be an IHS manifold and $f\in\Aut(X)$ be a biholomorphic automorphism of $X$. We denote by $p\colon \cX\to\Def(X)$, $p^{-1}(0)\cong X$, the universal deformation of $X$. By a theorem
of Horikawa~\cite[Theorem~8.1]{Horikawa3}, there exists an open neighbourhood $\Delta$ of the origin
of $\Def(X)$, a family of deformations $p'\colon \cX'\to\Delta$, $p'^{-1}(0)\cong X$, and a holomorphic map 
$\Phi\colon \cX_{|\Delta}\to \cX'$ over $\Delta$ such that $\Phi_0=f$. By the universality of~$p$, 
there exists a unique holomorphic map $\gamma\colon\Delta\to \Def(X)$ with $\gamma(0)=0$ such that $\cX'=\Delta\times_{\Def(X)}\cX$,
so we obtain by composition
a holomorphic map $F\colon\cX_{|\Delta}\to \cX$ such that $F_0=f$ with a commutative diagram
\begin{align}\label{diagr_deform}
\xymatrix{\cX_{\Delta}\ar[r]^F\ar[d]_{p_\Delta} & \cX\ar[d]^p\\\Delta\ar[r]^\gamma & \Def(X)}
\end{align}
Denote $D:=\Delta^\gamma=\{t\in\Delta\,|\,\gamma(t)=t\}$. By restricting to $D$ one obtains a  family of holomorphic maps
$$
\xymatrix{\cX_D\ar[r]^F\ar[d]_{p_D} & \cX_D\ar[d]^{p_D}\\D\ar@{=}[r] & D}
$$
with $F_0=f$ and such that the holomorphic map $F_t\colon \cX_t\to\cX_t$ is an isomorphism for all $t\in D$ (by shrinking $D$ if necessary). The pair $(p_D\colon\cX_D\to D, F)$ is thus a deformation space of the pair $(X,f)$ (see also \cite[Definition 1.1]{Mongardi}).
From the diagram (\ref{diagr_deform}) we get a commutative diagram of vector bundles over $X$ with exact rows
$$
\xymatrix{0\ar[r] & TX\ar[r]\ar[d]^{df} & \left.T{\cX_\Delta}\right|_X\ar[r]\ar[d]^{dF} & T_0\Delta\ar[r]\ar[d]^{d_0\gamma} & 0\\
0\ar[r] & TX\ar[r] & \left.T\cX\right|_X \ar[r] & T_0 S \ar[r]&0}
$$
that induces in cohomology an exact sequence
$$
\xymatrix{T_0\Delta\ar[r]^-{\rho}\ar[d]_{d_0\gamma} & H^1(X,TX)\ar[d]^{df}\\T_0 S\ar[r]^-\rho & H^1(X,TX)}
$$
where $\rho$ is the Kodaira-Spencer map. Since the deformation is universal, $\rho$ 
is an isomorphism (note that $T_0\Delta=T_0S$) and $df$ is an isomorphism since $f$ is an automorphism of $X$.
It follows that $d_0\gamma$ is invertible, so the fixed locus $D=\Delta^\gamma$ is smooth and its dimension
equals the dimension of the invariant space $H^1(X,TX)^{df}$.

If the automorphism $f$ acts symplectically on $X$ then
the isomorphism $TX\cong\Omega_X$ induced by $\omega_X$ is $f$-equivariant ($f$ induces natural actions denoted
 $df$ on tangent vectors, and denoted $f^\ast$ on differential forms, so $\dim D=\dim \left(H^{1,1}(X)^{f^\ast}\right)$. If the automorphism $f$ acts non-symplectically on $X$,
that is $f^\ast\omega_X=\xi\omega_X$ for some $\xi\in\IC^\ast$, $\xi\neq 1$, then the isomorphism $TX\cong \Omega_X$
is not $f$-equivariant and one computes that $H^1(X,TX)^{f^\ast}$ is isomorphic to the eigenspace of $H^{1,1}(X)$
corresponding to the eigenvalue $\xi$ of $f^\ast$. Assume that $f$ is an automorphism of prime order $p$, $G=\langle f\rangle$, so that
$\xi$ is a primitive $p$-th root of the unity. Since the action of $f^\ast$ on $H^2(X,\IC)$ comes from an action  on the lattice $H^2(X,\IZ)$  the eigenspaces of $f^\ast$ 
corresponding to the eigenvalues $\xi^i$, $i=1,\ldots,p-1$ have the same dimension which is $m_G(X)$. Since $H^{2,0}(X)$ is an eigenspace for the eigenvalue $\xi$ and $H^{0,2}(X)$ is one for $\bar\xi$, it follows that $\dim D=m_G(X)-1$ if $p\geq 3$ and $\dim D=m_G(X)-2$ if $p=2$.

If $X$ is an IHS manifold and $f\in\Aut(X)$, $G=\langle f\rangle$, 
it follows from Ehresmann's theorem that the
 $G$-module structure of $H^\ast(X,\IZ)$ is invariant under deformation of the pair $(X,f)$, so
the lattices $T_G(X)$, $S_G(X)$  and the values $h^\ast(X^G)$, $\chi(X^G)$  (by Formulas~(\ref{fp}),(\ref{chi})) are also invariant.

\begin{defi}\label{nat}
Let $X$ be an \ihskpt An automorphism $\sigma$
of $X$ is called {\it natural} if there exists a K3 surface $\Sigma$ and an automorphism $\varphi$ on $\Sigma$ such that
$(X,\sigma)$ is deformation equivalent to $(\Sigma^{[2]}, \varphi^{[2]})$, where $\varphi^{[2]}$ denotes the induced automorphism on $\Sigma^{[2]}$ by $\varphi$.     
\end{defi}

\section{Moduli spaces of lattice polarized IHS manifolds}

\subsection{The global Torelli theorem}

We recall some well known facts from \cite{Huybrechts} and \cite{Markmantorelli}. If $X$ is an irreducible holomorphic symplectic manifold, a {\it marking} for $X$ is a choice of 
an isometry $\eta: L \lra H^2(X,\IZ)$. Two marked pairs $(X_1,\eta_1)$ and $(X_2,\eta_2)$ are isomorphic if there is an isomorphism $f:X_1\lra X_2$ such that $\eta_1=f^*\circ \eta_2$. There exists a coarse moduli space $\mathfrak{M}_{L}$ that parametrizes isomorphism classes of marked pairs, which is a non-Hausdorff smooth complex manifold (see \cite{Huybrechts}). If $X$ is an \ihsk this has dimension $21$. Denote by 
$$
\Omega_L:=\{\omega\in\IP(L\otimes \IC)\, |\, q(\omega)=0, q(\omega+\bar{\omega})>0\}
$$
the {\it period domain} which is an open (in the usual topology) subset of the non-singular quadric defined by $q(\omega)=0$. The period map
$$
P:\mathfrak{M}_{L} \lra \Omega_L, (X,\eta)\mapsto \eta^{-1}(H^{2,0}(X))
$$ 
is a local isomorphism by the Local Torelli Theorem \cite[Th\'eor\`eme 5]{BeauvilleKaehler}. For $\omega\in \Omega_L$ we put
$$
L^{1,1}(\omega):=\{\lambda\in L\,|\, (\lambda,\omega)=0\}
$$
where $(\cdot,\cdot)$ is the bilinear form associated to the quadratic form $q$. 
Then $L^{1,1}(\omega)$ is a sublattice of $L$. Let $\lambda\in L$, $\lambda\not= 0$, and consider the hyperplane 
$$
H_{\lambda}=\{\omega\in\Omega_L\,|\, (\omega,\lambda)=0\}.
$$ 
Then  $L^{1,1}(\omega)=\{0\}$ if $\omega$ does not belong to the countable union of hyperplanes $\bigcup_{\lambda\in L\setminus\{0\}} H_{\lambda}$. In particular, given a marked pair $(X,\eta)$ we get  $\eta^{-1}(\NS(X))= L^{1,1}(P(X,\eta))$. The set
$\{\alpha\in H^{1,1}(X,\IR)\,|\, q(\alpha) >0\}$  has two connected components, we call {\it positive cone} $\mathfrak{C}_X$ the connected component containing the K\"ahler cone $\mathfrak{K}_X$.

Recall that two points $x,y$ of a topological space $M$ are called {\it inseparable} if every pairs of open neighbourhoods $x\in U$ and $y\in V$ have non-empty intersection, a point $x\in M$ is called a {\it Hausdorff point} if for every $y\in M$, $y\not= x$, then $x$ and $y$ are separable. 

\begin{theorem}[Global Torelli Theorem] \cite{Verbitsky},\cite[Theorem 2.2]{Markmantorelli} 

Let $\mathfrak{M}^0_L$  be a connected component of $\frM_L$.
\begin{enumerate}
\item The period map $P$ restricts to a surjective holomorphic map $$P_0:\frM^0_L\lra \Omega_L.$$
\item For each $\omega\in\Omega_L$, the fiber $P_0^{-1}(\omega)$ consists of pairwise inseparable points.   
\item Let $(X_1,\eta_1)$ and $(X_2,\eta_2)$ be two inseparable points of $\frM_L^0$. Then $X_1$ and~$X_2$ are bimeromorphic. 
\item The point $(X,\eta)\in\frM_L^0$ is Hausdorff  if and only if $\mathfrak{C}_X=\mathfrak{K}_X$.
\item\label{item5} The fiber $P_0^{-1}(\omega)$, $\omega\in \Omega_L$ consists of a single Hausdorff point if $L^{1,1}(\omega)$ is trivial, or if $L^{1,1}(\omega)$ is of rank one, generated by a class $\alpha$ satisfying $q(\alpha)>0$.   
\end{enumerate}
\end{theorem}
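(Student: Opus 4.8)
The plan is to treat the five assertions in order, building on the Local Torelli Theorem already quoted (which gives that $P$ is a local isomorphism, so that $P_0$ is open and that $\frM_L^0$ and $\Omega_L$ have equal dimension) together with the existence of hyperk\"ahler metrics furnished by Yau's solution of the Calabi conjecture. To obtain surjectivity~(1) the key input is the twistor construction: a Ricci-flat K\"ahler metric on $X$ representing a given K\"ahler class determines a two-sphere of complex structures and hence a \emph{twistor family}, whose periods trace out a rational curve, a \emph{twistor line}, in $\Omega_L$. These twistor lines are abundant enough that any two points of $\Omega_L$ can be joined by a finite chain of them meeting the image of $P_0$; since that image is open by Local Torelli, a limiting argument along twistor lines shows it is also closed, and as $\Omega_L$ is connected one concludes that $P_0$ is surjective.

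Then comes the heart of the matter, assertions~(2) and~(3). Here I would pass to the Teichm\"uller space $\mathrm{Teich}$ of marked complex structures modulo isotopy, on which the mapping class group acts with $\frM_L^0$ a component of the quotient. Verbitsky's Global Torelli Theorem asserts that the period map induces a bijection from the Hausdorff reduction of a component of $\mathrm{Teich}$ onto $\Omega_L$; equivalently, two marked pairs have the same period precisely when they are inseparable, which is~(2). For~(3) I would invoke Huybrechts' result that inseparable IHS manifolds are bimeromorphic: two non-separated fibres of the period map carry isometric Hodge structures on $H^2$ with the same N\'eron--Severi lattice, and the bimeromorphism is produced by transporting K\"ahler classes through the common period.

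Finally, for~(4) and~(5) I would use the description of the K\"ahler cone inside the positive cone. A marked pair $(X,\eta)$ is a Hausdorff point exactly when its fibre is a singleton, and by the above this occurs iff no other birational model shares its period, that is iff $\mathfrak{K}_X$ exhausts the whole positive cone $\mathfrak{C}_X$; this is~(4). Concretely, $\mathfrak{C}_X\setminus\mathfrak{K}_X$ is cut out by a locally finite family of walls attached to certain negative classes in $\NS(X)$. When $L^{1,1}(\omega)=\eta^{-1}(\NS(X))$ is trivial, or of rank one generated by a class $\alpha$ with $q(\alpha)>0$, no such wall class exists, so $\mathfrak{K}_X=\mathfrak{C}_X$, the point is Hausdorff, and by~(2)--(3) the fibre $P_0^{-1}(\omega)$ consists of this single point, which is~(5).

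The main obstacle is unquestionably assertion~(2): Verbitsky's injectivity statement is the genuinely deep ingredient, resting on the global geometry of the Teichm\"uller space, the ergodic action of the monodromy group on $\Omega_L$, and Markman's precise determination of the monodromy group and of the chamber structure of the positive cone. Once this core result and the structure of the K\"ahler cone are available, the remaining points --- surjectivity via twistor lines, bimeromorphy of inseparable points, and the wall analysis giving~(4) and~(5) --- follow in a comparatively formal way.
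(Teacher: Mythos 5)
This statement is recalled in the paper purely as a citation of Verbitsky and of Markman's survey --- the paper contains no proof of it, so there is no internal argument to compare yours against. Your sketch is a faithful outline of how the result is established in that cited literature: Huybrechts' twistor-line argument for surjectivity in (1), Verbitsky's Teichm\"uller-space theorem for the inseparability statement (2), Huybrechts' result that non-separated marked pairs are bimeromorphic for (3), and the wall-and-chamber description of $\mathfrak{K}_X$ inside $\mathfrak{C}_X$ (due to Markman's analysis of the monodromy group and of the chamber structure) for (4) and (5). Since you invoke, rather than prove, exactly the deep external ingredients that the paper itself treats as black boxes, your proposal is consistent with --- and no less complete than --- the paper's own treatment, and the attributions and logical order (Local Torelli and openness, then surjectivity, then injectivity up to inseparability, then the cone analysis) are correct.
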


\begin{rem}
In assertion (\ref{item5}) if $L^{1,1}(\omega)$ is trivial then $X$ is non projective. If it is of rank one generated by a class $\alpha$ with $q(\alpha)>0$ then $X$ is projective, $\pm\alpha$ is an ample class and the  N\'eron--Severi group of $X$ has signature $(1,0)$, so its transcendental lattice has signature $(2, 20)$ (see \cite[Theorem~3.11]{Huybrechts}).
\end{rem}

%%%%%%%%%%%%%%%%%%%%%%%%%%%%%%%%%%%%%

\subsection{Lattice Polarizations}

In this subsection and the next one, we extend some constructions and results of \cite[\S 6]{DolgachevGeemenKondo}, \cite[\S 10]{dolgachevkondo}, \cite{dolgachevmirror} and \cite[\S 9]{AST}. 
See also \cite{Camere2} for related results.

Let $M$ be an even non-degenerate lattice of rank  $\rho\geq 1$ and signature $(1,\rho-1)$.
An \emph{$M$-polarized} \ihsk is a pair $(X,j)$  where $X$ is a projective \ihsk and $j$ is a primitive embedding of lattices $j:M\hookrightarrow NS(X)$. 
Two $M$-polarized \ihsk $(X_1, j_1)$ and $(X_2,j_2)$ are called \emph{equivalent} if there exists an isomorphism $f:X_1\to X_2$ such that $j_1=f^*\circ j_2$. As in \cite[\S 10]{dolgachevkondo} and \cite{dolgachevmirror} one can construct a moduli space of \emph{marked} $M$-polarized  \ihsk as follows. We fix a primitive embedding of $M$ in $L$ and we identify $M$ with its image in $L$. A  \emph{marking} of $(X,j)$ is an isomorphism of lattices $\eta: L \to H^2(X,\IZ)$ such that $\eta_{|M}=j$. As observed in \cite[p.11]{dolgachevmirror}, if the embedding of $M$ in $L$ is unique up to an isometry of $L$ then every $M$-polarization admits a compatible marking.
Two $M$-polarized marked \ihsk $(X_1,j_1, \eta_1)$ and $(X_2, j_2, \eta_2)$ are called \emph{equivalent} if there exists an isomorphism $f:X_1\to X_2$ such that $\eta_1=f^*\circ \eta_2$ (this clearly implies that $j_1=f^*\circ j_2$). Let $N:=M^{\perp}\cap L$ be the orthogonal complement of $M$ in $L$ and set
$$
\Omega_{M}:=\{\omega  \in \IP(N\otimes \IC)\,|\, q(\omega)=0,\, q(\omega+\bar{\omega})>0\}.
$$   
Since $N$ has signature $(2,21-\rho)$ the period domain $\Omega_{M}$ is a disjoint union of two connected components of  dimension $21-\rho$. For each $M$-polarized marked \ihsk $(X,j,\eta)$, 
since $\eta(M)\subset \NS(X)$ we have $\eta^{-1}(H^{2,0}(X))\in\Omega_M$. On the other hand, by the surjectivity of the period map \cite[Theorem 8.1]{Huybrechts}
restricted to some connected component $\mathfrak{M}^0_L$ of $\mathfrak{M}_L$ we can associate to each point $\omega\in\Omega_M$ an $M$-polarized \ihsk as follows:
there exists a marked pair $(X,\eta)\in\frM_L^0$ such that $\eta^{-1}(H^{2,0}(X))=\omega \in \IP(N\otimes \IC)$ so $M=N^{\perp}\subset \omega^{\perp}\cap L$, hence $\eta(M)\subset H^{2,0}(X)^{\perp}\cap H^{1,1}_{\IZ}(X)=\NS(X)$ and we take $(X,\eta_{|M},\eta)$.

By the Local Torelli Theorem for IHS manifolds, an $M$-polarized \ihsk  $(X,j)$ has a local deformation space $\Def_M(X)$ that is contractible, smooth of dimension $21-\rho$, such that the period map $P\colon\Def_M(X)\to\Omega_M$ is a local isomorphism (see \cite{dolgachevmirror}). By gluing all these local deformations spaces one obtains a moduli space $K_M$ of marked $M$-polarized \ihsk that is a non-separated 
analytic space, with a period 
map $P\colon K_M\to\Omega_M$.
The following proposition generalises \cite[Lemma 2.9]{OW}:

\begin{prop}\label{prop:dense}
If $\rank M\leq 20$, there exists a dense subset $\Omega^0_{M}$ of $\Omega_M$ such that if
 $(X,\eta)$ is a marked \ihsk whose period is in $\Omega_{M}^0$ then $\NS(X)$ is isomorphic to $M$.
\end{prop}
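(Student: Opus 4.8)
The plan is to define $\Omega_M^0$ as the complement in $\Omega_M$ of a countable family of hyperplane sections and to reduce everything to the computation of $L^{1,1}(\omega)$. Recall from the discussion preceding the statement that for any marked pair $(X,\eta)$ one has $\eta^{-1}(\NS(X))=L^{1,1}(P(X,\eta))$, with $L^{1,1}(\omega)=\{\lambda\in L\mid(\lambda,\omega)=0\}$. Since $\eta$ then identifies $\NS(X)$ with $L^{1,1}(\omega)$ for $\omega=P(X,\eta)$, the conclusion $\NS(X)\cong M$ will follow as soon as I exhibit a dense subset $\Omega_M^0\subseteq\Omega_M$ on which $L^{1,1}(\omega)$ is exactly $M$.

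First I would establish the two inclusions separately. For every $\omega\in\Omega_M\subset\IP(N\otimes\IC)$ and every $m\in M$ one has $(m,\omega)=0$, since $M$ and $N=M^\perp\cap L$ are mutually orthogonal; hence $M\subseteq L^{1,1}(\omega)$ for all $\omega\in\Omega_M$. For the reverse inclusion the key input is the primitivity of $M$, which gives $L\cap(M\otimes\IQ)=M$: any $\lambda\in L\setminus M$ therefore has nonzero orthogonal projection onto $N\otimes\IQ$, and since the form on $N$ is nondegenerate the linear form $(\lambda,-)$ restricts to a nonzero form on $N\otimes\IC$. Consequently its zero locus
$$
H_\lambda:=\{\omega\in\Omega_M\mid(\lambda,\omega)=0\}
$$
is the trace on $\Omega_M$ of a proper hyperplane of $\IP(N\otimes\IC)$.

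I would then set $\Omega_M^0:=\Omega_M\setminus\bigcup_{\lambda\in L\setminus M}H_\lambda$ and prove that it is dense. Here the hypothesis $\rank M\leq 20$ is used: it guarantees $\rank N=23-\rank M\geq 3$, so the quadric $\{q=0\}\subset\IP(N\otimes\IC)$ is irreducible of positive dimension and hence lies in no hyperplane; each $H_\lambda$ is then a proper closed analytic subset of $\Omega_M$, in particular nowhere dense. As $L$ is countable, $\bigcup_{\lambda\in L\setminus M}H_\lambda$ is meager, and since $\Omega_M$ is a complex manifold, hence a Baire space, the residual set $\Omega_M^0$ is dense.

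For $\omega\in\Omega_M^0$ no $\lambda\in L\setminus M$ is orthogonal to $\omega$, so $L^{1,1}(\omega)\subseteq M$; together with $M\subseteq L^{1,1}(\omega)$ this gives $L^{1,1}(\omega)=M$, whence $\NS(X)\cong M$ for any marked \ihsk with period in $\Omega_M^0$. The step I expect to be the main obstacle is the density claim: one must verify that each hyperplane section $H_\lambda$ is genuinely proper — this is precisely where the primitivity of $M$ and the bound $\rank M\leq 20$ both enter — and then apply Baire category to the countable union.
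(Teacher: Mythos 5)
Your proof is correct and follows essentially the same route as the paper's: both define $\Omega_M^0$ by removing a countable union of hyperplane sections $H_\lambda$ from $\Omega_M$, apply Baire's theorem to get density, and then conclude via $\NS(X)=\eta(L^{1,1}(\omega))=\eta(M)$. The only cosmetic difference is that you index the hyperplanes by $\lambda\in L\setminus M$ whereas the paper uses $\lambda\in N\setminus\{0\}$ (these yield the same union, by projecting onto $N\otimes\IQ$ and clearing denominators), and you spell out the properness and primitivity details that the paper leaves implicit.
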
 
\begin{proof}

For each $\lambda\in N\setminus\{0\}$ consider the hyperplane $H_{\lambda}:=\{\omega\in \Omega_M~|~(\omega,\lambda)=0\}$ and let $\mathcal{H}:=\bigcup_\lambda H_{\lambda}$. Each 
 subset $\Omega_M\setminus{H}_{\lambda}$ is open and dense in $\Omega_M$ hence  by Baire's Theorem 
the subset $\Omega^0_{M}:=\Omega_{M}\setminus\mathcal{H}$ is dense in $\Omega_M$ since~$\mathcal{H}$ is a countable union of 
complex closed subspaces. If $\omega=P(X,\eta)\in\Omega^0_M$ then $\NS(X)=\eta(L^{1,1}(\omega))=\eta(M)$. 
\end{proof}

This means hence that for a \emph{general} point of $\Omega_M$ the associated marked $M$-polarized 
\ihsk has N\'eron--Severi group isometric to $M$ and transcendental lattice isometric to $N$. If $\rank M=21$
then $\Omega_M$ consists of two periods that correspond to an \ihsk whose N\'eron--Severi group is isometric
to $M$.
We specialize this construction of the period domain in the case of projective \ihsk with non-symplectic automorphism.
\begin{rem}
Observe that in the construction we fix an embedding of $M$ in $L$. Different embeddings give a priori different constructions of $\Omega_M$.
\end{rem}

%%%%%%%%%%%%%%%%%%%%%%%%%%%%%%%%%%%

\subsection{Eigenperiods of projective \ihsk with a non-symplectic automorphism}

Let $(X,j)$ be an $M$-polarized \ihsk and $G=\langle\sigma\rangle$ a cyclic group of prime 
order $p\geq 2$ acting non-symplectically on $X$. It is easy to see that the invariant sublattice $T=T_G(X)$ is contained in $\NS(X)$. Assume that the action of $G$ on $j(M)$ is the identity
and that there exists a group homomorphism $\rho: G\lra O(L)$ such that
$$
M=L^{\rho}:=\{x\in L\,|\, \rho(g)(x)=x,\forall g\in G\}.
$$
We define a {\it $(\rho, M)$-polarization} of $(X,j)$ as a marking $\eta\colon L\to H^2(X,\IZ)$ such that $\eta_{|M}=j$ and $\sigma^\ast=\eta\circ\rho(\sigma)\circ\eta^{-1}$.

Two $(\rho, M)$-polarized \ihsk $(X_1, j_1)$ and $(X_2,j_2)$ are isomorphic if there are markings, $\eta_1\colon L\to H^2(X_1, \IZ)$ and $\eta_2\colon L\lra H^2(X_2,\IZ)$ such that $\eta_i{_{|M}}=j_i$, and an isomorphism $f\colon X_1\to X_2$ such that
$\eta_1=f^*\circ \eta_2$. 

Recall that by construction $\omega_X$ is the line in $L\otimes \IC$ defined by $\omega_X=\eta^{-1}(H^{2,0}(X))$. Let  $\xi\in\IC^*$ such that $\rho(\sigma)(\omega_X)=\xi \omega_X$. Observe that $\xi\not= 1$ since the action is non-symplectic and it is a primitive $p$-th root of the unity since $p$ is prime. The period $\omega_X$ belongs to the eigenspace of $N\otimes \IC$
 relative to the eigenvalue $\xi$, where $N=M^{\perp}\cap L$. We denote it by $N(\xi)$ (if $p=2$, we have $\xi=-1$ and  we denote $N(\xi)=N_{\IR}(\xi)\otimes \IC$, where $N_{\IR}(\xi)$ is the real eigenspace relative to $\xi=-1$). 
 
Assume that $\xi\not= -1$, then the period belongs to the space
$$
\Omega_M^{\rho,\xi}:=\{x\in \IP(N(\xi))\,|\, q(x+\bar{x})>0\}
$$ 
of dimension
 $\dim N(\xi)-1$ which is a complex ball if $\dim N(\xi)\geq 2$. It is easy to check that every point $x\in \Omega_M^{\rho,\xi}$  satisfies automatically the condition $q(x)=0$. 

If $\xi=-1$  we set $\Omega_M^{\rho,\xi}:=\{x\in \IP(N(\xi))\,|\, q(x)=0, \, q(x+\bar{x})>0\}$.
It has dimension $\dim N(\xi)-2$, clearly  $\Omega_M^{\rho,\xi}\subset \Omega_M$. 

Assume now that $M=T=\overline{T}\oplus \langle -2\rangle$ where $\overline{T}$ is an even non degenerate lattice of signature $(1,21-(p-1) m)$. Assume moreover that $\overline{T}$ has a primitive embedding in the K3 lattice
 $\Lambda$, we fix such an embedding and we call again $\overline{T}$ the image. This induces in a natural way a primitive embedding of $T$ in $L$. We identify then $T$ with its image.  Let $N=S=T^{\perp}\cap L$ and assume that $S\subset \Lambda$. For $\delta\in S$ with $q(\delta)=-2$, denote $H_\delta=\delta^{\perp}\cap S$ and $\Delta:=\bigcup_{\delta\in S, q(\delta)=-2} H_{\delta}$. Then we have (with the same notation as above):
 
\begin{theorem}\label{ball}
Let $X$ be a $(\rho, T)$-polarized \ihsk such that $H^{2,0}(X)$ is contained in the eigenspace of $H^2(X,\IC)$ relative to $\xi$. 
 Then $\omega_X\in \Omega_T^{\rho,\xi}$ and
conversely, if  $\dim N(\xi)\geq 2$ every point of $\Omega_T^{\rho,\xi}\setminus\Delta$ is the period point of some $(\rho, T)$-polarized \ihskpt
\end{theorem}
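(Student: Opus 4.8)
The plan is to prove the two directions of Theorem~\ref{ball} separately, using the global Torelli theorem together with the eigenperiod construction set up above. The first (easy) direction is essentially a restatement of definitions: if $X$ is a $(\rho,T)$-polarized \ihsk with $H^{2,0}(X)$ contained in the $\xi$-eigenspace, then by definition of the polarization we have $\sigma^\ast=\eta\circ\rho(\sigma)\circ\eta^{-1}$, and $\omega_X=\eta^{-1}(H^{2,0}(X))$ lies in $N\otimes\IC$ since $\eta(M)\subset\NS(X)$ is orthogonal to $H^{2,0}(X)$. The hypothesis that $H^{2,0}(X)$ sits in the $\xi$-eigenspace forces $\rho(\sigma)(\omega_X)=\xi\omega_X$, so $\omega_X\in\IP(N(\xi))$; the Hodge--Riemann relations give $q(\omega_X+\bar\omega_X)>0$, and (as remarked above) the condition $q(\omega_X)=0$ is automatic in the ball case $\xi\neq-1$. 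Hence $\omega_X\in\Omega_T^{\rho,\xi}$.

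For the converse I would start from a point $x\in\Omega_T^{\rho,\xi}\setminus\Delta$ and reconstruct an \ihsk realizing it. First I would use the surjectivity of the period map on a connected component $\frM_L^0$ (Theorem, assertion (1)) to find a marked pair $(X,\eta)$ with $\eta^{-1}(H^{2,0}(X))=x$. Since $x\in\Omega_T^{\rho,\xi}\subset\Omega_T$, exactly as in the $M$-polarized construction of the previous subsection we get $\eta(T)\subset\NS(X)$, so $(X,j:=\eta_{|T})$ is a $T$-polarized \ihskpt The task is then to promote this to a $(\rho,T)$-polarization, i.e.\ to produce an actual automorphism $\sigma$ of $X$ with $\sigma^\ast=\eta\circ\rho(\sigma)\circ\eta^{-1}$. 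The natural candidate is the isometry $\Phi:=\eta\circ\rho(\sigma)\circ\eta^{-1}$ of $H^2(X,\IZ)$, and I must show it is induced by a genuine biholomorphic automorphism of~$X$.

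The main obstacle is precisely this last step: showing that the Hodge isometry $\Phi$ is effective (induced by an automorphism). I would verify the hypotheses of the Torelli theorem for automorphisms. First, $\Phi$ is a Hodge isometry because $\rho(\sigma)(x)=\xi x$ preserves the line $\omega_X=x$ and hence the whole Hodge structure, so $\Phi$ maps $H^{2,0}(X)$ to itself and therefore preserves $\NS(X)$ and $H^{1,1}(X)$. Second, by Markman's version of the global Torelli theorem an automorphism exists provided $\Phi$ is a Hodge isometry that preserves a K\"ahler class (equivalently sends the positive cone to itself and acts trivially on the discriminant-like monodromy data and preserves a K\"ahler chamber). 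This is exactly where the condition $x\notin\Delta$ enters: removing the $(-2)$-walls $H_\delta$ guarantees that there are no classes $\delta\in S$ with $q(\delta)=-2$ orthogonal to $\omega_X$, so by Proposition~\ref{prop:dense}-type genericity $\NS(X)$ contains no such $(-2)$-classes coming from~$S$, which ensures that $X$ has no obstructing wall and that $\Phi$ preserves the K\"ahler cone (up to composing with a monodromy reflection). Concretely, one checks that $\Phi$ fixes the positive cone $\mathfrak{C}_X$ and, after possibly correcting by an element of the Weyl group generated by reflections in $(-2)$-classes (which is trivial on $\omega_X$ away from $\Delta$), preserves $\mathfrak{K}_X$; Markman's theorem then yields a unique $\sigma\in\Aut(X)$ with $\sigma^\ast=\Phi$. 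Finally $\sigma$ has order $p$ (as $\rho(\sigma)$ does and $\sigma\mapsto\sigma^\ast$ is injective on automorphisms by Torelli), acts as the identity on $\eta(T)=j(T)$, and satisfies $\sigma^\ast\omega_X=\xi\omega_X$, so $(X,j,\eta)$ is the desired $(\rho,T)$-polarized \ihsk with period~$x$.
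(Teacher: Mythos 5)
Your forward direction is fine and agrees with the paper (there it is a one-line consequence of the definitions). Your converse, however, takes a genuinely different route from the paper's and has a gap at the decisive step. The paper never runs a Torelli argument on the fourfold at all: it uses the standing hypotheses $T=\overline{T}\oplus\langle -2\rangle$ and $S\subset\Lambda$ to corestrict $\rho(\sigma)$ to the K3 lattice $\Lambda$, applies Namikawa's theorem (Theorem~\ref{namik}) to produce a K3 surface $\Sigma$ with an automorphism $a$ realizing $\bar\rho(\sigma)$ up to Weyl conjugation, and then takes $Y=\Sigma^{[2]}$ with the \emph{natural} automorphism $a^{[2]}$; the automorphism of the fourfold exists by functoriality of the Hilbert scheme, so its K\"ahler cone never has to be controlled.

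The gap in your argument is the claim that, because $x\notin\Delta$, the isometry $\Phi=\eta\circ\rho(\sigma)\circ\eta^{-1}$ preserves the K\"ahler cone of $X$ up to reflections in $(-2)$-classes. For manifolds of type \ihsk the walls of the K\"ahler cone inside the positive cone are cut out not only by $(-2)$-classes but also by classes of square $-10$ and divisibility $2$, and crossing such a wall corresponds to a flop onto a different birational model, not to an integral reflection; so removing $\Delta$ alone does not eliminate the obstruction. Indeed, if $S\cap x^{\perp}$ contained such a class $\kappa$, then \emph{no} $(\rho,T)$-polarized \ihsk with period $x$ could exist: averaging a K\"ahler class over the group produces an invariant K\"ahler class, which lies in $\eta(T)_{\IR}$ and hence is orthogonal to $\eta(\kappa)$, contradicting that the wall $\eta(\kappa)^{\perp}$ misses the K\"ahler cone. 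What rescues the statement is exactly the hypothesis $S\subset\Lambda$, which your proof never invokes: since $\Lambda$ is unimodular, no class of $S$ has divisibility $2$ in $L$, so $(-2)$-classes are the only possible wall divisors in $S\cap x^{\perp}$, and those are excluded by $x\notin\Delta$. Even granting this lattice input, your correction step needs repair: the theorem concerns \emph{every} point of $\Omega_T^{\rho,\xi}\setminus\Delta$, not a general one, so Proposition~\ref{prop:dense} gives nothing and $\NS(X)$ may be strictly larger than $\eta(T)$; moreover composing or conjugating $\Phi$ by a reflection no longer yields $\sigma^{\ast}=\eta\circ\rho(\sigma)\circ\eta^{-1}$ on the same marked pair --- one must instead locate a $\Phi$-invariant chamber, move it into the fundamental exceptional chamber, and possibly replace $X$ by a birational model $X'$ with an adjusted marking before applying Markman's theorem. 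All of this bookkeeping is what Namikawa's theorem packages for free in the paper's K3 reduction.
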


The proof is an application of a result of Namikawa that we recall for convenience.

\begin{theorem}\cite[Theorem 3.10]{Namikawa}\label{namik}
Let $\Sigma$ be a K3 surface and $G$ a finite subgroup of the group of isometries of $H^2(\Sigma,\IZ)$. Denote by $\omega$ the period of $\Sigma$ in $H^2(\Sigma,\IC)$ and set $S_{G}(\Sigma):=(H^2(\Sigma,\IZ)^G)^{\perp}\cap\{\IC\omega\}^{\perp}$. Then there exists an element $\varpi$ in the Weyl group $W(\Sigma)$ of $\Sigma$ such that $\varpi G\varpi^{-1}\subset \Aut(\Sigma)$ if and only if 
\begin{itemize}
\item[(i)] $\IC\omega$ is $G$-invariant.
\item[(ii)] $S_{G}(\Sigma)$ contains no element of length $-2$.
\item[(iii)] If $\omega\in H^2(\Sigma,\IC)^G$ then $S_{G}(\Sigma)$ is non-degenerate and negative definite if $S_{G}(\Sigma)\not= 0$.
\item[(iii')] If $\omega\notin H^2(\Sigma,\IC)^G$ then $H^2(\Sigma,\IC)^G$ contains an element $\alpha$ with $(\alpha,\alpha)>0$.
\end{itemize} 
\end{theorem}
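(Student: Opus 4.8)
The plan is to deduce Namikawa's criterion from the strong Torelli theorem for K3 surfaces together with the fact that the Weyl group acts simply transitively on the chambers of the positive cone. I will use freely the following classical facts. The positive cone $\mathcal C^+\subset H^{1,1}(\Sigma)\cap H^2(\Sigma,\IR)$ (the component containing the K\"ahler cone $\mathcal K$) carries the wall arrangement $\{\delta^\perp\}$ indexed by the $(-2)$-classes $\delta$ of $\NS(\Sigma)$; the Weyl group $W(\Sigma)$, generated by the reflections $s_\delta$, permutes the connected components (chambers) of the complement simply transitively, with $\mathcal K$ as one chamber; and a Hodge isometry of $H^2(\Sigma,\IZ)$ (one preserving the line $\IC\omega$) that maps $\mathcal K$ to itself is induced by a unique automorphism of $\Sigma$. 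Since each $s_\delta$ fixes $\omega$ (because $\delta\in\NS(\Sigma)\subset\omega^\perp$), every $\varpi\in W(\Sigma)$ is a Hodge isometry fixing $\omega$; consequently all four conditions are invariant under replacing $G$ by $\varpi G\varpi^{-1}$, as $S_{\varpi G\varpi^{-1}}(\Sigma)=\varpi\,S_G(\Sigma)$ and $\varpi$ preserves $(-2)$-classes.

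For necessity, after conjugating I may assume $G\subset\Aut(\Sigma)$. Each $g\in G$ is then a Hodge isometry, so it fixes $\IC\omega$, giving (i). Averaging a K\"ahler class over $G$ produces a $G$-invariant class $\kappa_0\in\mathcal C^+$ (automorphisms preserve the convex cone $\mathcal K$), so $H^2(\Sigma,\IR)^G$ contains a positive $(1,1)$-class; this is (iii$'$) when $\omega$ is not invariant, while in the symplectic case $\mathrm{Re}\,\omega,\mathrm{Im}\,\omega,\kappa_0$ give three positive directions in the invariant part, forcing its orthogonal complement, which here is $S_G(\Sigma)\otimes\IR$, to be negative definite, giving (iii). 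Finally, if $\delta\in S_G(\Sigma)$ were a $(-2)$-class I may assume $\delta$ effective by Riemann--Roch; then $\sum_{g\in G}g^*\delta$ is effective and nonzero, yet it equals $|G|$ times the invariant projection of $\delta$, which vanishes as $\delta$ is anti-invariant. This contradiction yields (ii).

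For sufficiency I first show, using (i) and (iii)/(iii$'$), that $G$ preserves $\mathcal C^+$. By (i) the group $G$ acts by Hodge isometries, hence preserves $H^{1,1}(\Sigma)\cap H^2(\Sigma,\IR)$ and the pair $\pm\mathcal C^+$. In the symplectic case (iii) makes the anti-invariant part negative definite, so the unique positive direction of the $(1,19)$-lattice $H^{1,1}$ is invariant; in the non-symplectic case the positive plane spanned by $\mathrm{Re}\,\omega,\mathrm{Im}\,\omega$ is anti-invariant, so (iii$'$) again supplies a $G$-invariant positive $(1,1)$-class. Calling this class $\alpha$ and replacing it by $-\alpha$ if needed, $\alpha\in\mathcal C^+$ is $G$-fixed, so $G$ preserves $\mathcal C^+$. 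Now comes the crux: a \emph{generic} $G$-invariant $\alpha\in\mathcal C^+$ lies on no wall. Indeed $\delta^\perp$ contains the whole invariant subspace exactly when $\delta$ is orthogonal to it, i.e.\ when $\delta$ is an anti-invariant $(-2)$-class orthogonal to $\omega$; such $\delta$ lie in $S_G(\Sigma)$ and are excluded by (ii). Every other $(-2)$-class meets the invariant subspace in a proper hyperplane, and there are only countably many, so a generic invariant $\alpha\in\mathcal C^+$ avoids them all and lies in the interior of a single chamber $C$, which $G$ then stabilizes.

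It remains to transport $C$ to $\mathcal K$. By simple transitivity of $W(\Sigma)$ on chambers there is a unique $\varpi\in W(\Sigma)$ with $\varpi(C)=\mathcal K$, and then $\varpi G\varpi^{-1}$ stabilizes $\mathcal K$; each $\varpi g\varpi^{-1}$ is a Hodge isometry (a composite of $g$, which fixes $\IC\omega$, with the $\omega$-fixing $\varpi^{\pm 1}$) preserving $\mathcal K$, hence is induced by a unique automorphism by the strong Torelli theorem, so $\varpi G\varpi^{-1}\subset\Aut(\Sigma)$. The main obstacle is exactly the chamber-finding step: one must simultaneously produce a $G$-invariant positive class (where (iii)/(iii$'$) enter) and ensure it can be chosen off every reflection wall (where (ii) is used); once a $G$-stable chamber is found, the Weyl group and Torelli finish the argument.
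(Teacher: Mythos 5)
The paper offers no internal proof for you to be compared against: this theorem is quoted verbatim, with attribution, from Namikawa's paper and is used purely as a black box in the proof of Theorem~\ref{ball}. Judged on its own merits, your argument is correct and follows the classical route (essentially Namikawa's original strategy, combined with Nikulin's averaging tricks). For necessity, averaging a K\"ahler class over $G$ produces the invariant positive class needed for (iii)/(iii$'$), and your effectivity argument for a $(-2)$-class $\delta\in S_G(\Sigma)$ is the standard one: the $G$-average of an effective class is effective and nonzero, yet equals $|G|$ times the invariant projection of $\delta$, which vanishes because the invariant subspace is nondegenerate over $\IQ$ and $\delta$ is orthogonal to it. For sufficiency, your crux is exactly the right one: a wall $\delta^\perp$ contains the entire invariant subspace only if $\delta\in S_G(\Sigma)$, which (ii) forbids, so every wall meets the invariant positive cone in a proper subspace; since there are only countably many walls, a generic invariant positive $(1,1)$-class lies in the interior of a chamber, which $G$ then stabilizes, and simple transitivity of $W(\Sigma)$ on chambers together with the strong Torelli theorem (plus injectivity of $\Aut(\Sigma)\to O(H^2(\Sigma,\IZ))$, which you correctly invoke to get a group, not just a set, of automorphisms) completes the proof.

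Two small points deserve flagging, neither a genuine gap. First, condition (iii$'$) must be read as asserting a \emph{real} invariant class of positive square: under the $\IC$-bilinear extension of the form, even the complexification of a negative definite real space contains $\alpha$ with $(\alpha,\alpha)>0$ (take $\alpha=x+iy$ with $(x,x)=-1$, $(y,y)=-3$, $(x,y)=0$), so the literal complex statement would be nearly vacuous; your interpretation is the intended one and matches how the paper applies the theorem in the proof of Theorem~\ref{ball}, where $\alpha$ is taken in the hyperbolic invariant lattice $\overline{T}$. Second, in the symplectic case you implicitly use that $S_G(\Sigma)=(H^2(\Sigma,\IZ)^G)^\perp$, i.e.\ that orthogonality to $\omega$ is automatic because $\mathrm{Re}\,\omega$ and $\mathrm{Im}\,\omega$ lie in the real invariant subspace; this identification of $S_G(\Sigma)\otimes\IR$ with the full orthogonal complement of the invariant part is used in both directions of your argument and deserves one explicit line.
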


\begin{proof}[Proof of Theorem \ref{ball}] We have already proven that $\omega_X\in \Omega_T^{\rho,\xi}$. 
Conversely, if $\dim N(\xi)\geq 2$ the locus $\Omega_T^{\rho,\xi}\setminus\Delta$ is non empty (see the proof of 
Proposition~\ref{prop:dense}) so for any $\omega\in\Omega_T^{\rho,\xi}\setminus\Delta$, by our assumptions on the lattice $T$ the map $\rho(\sigma)$ acts as the identity on the $\langle-2\rangle$ class of the decomposition $\Lambda\oplus \langle-2\rangle=L$, so it leaves invariant the lattice $\Lambda$ and it thus induces a corestriction map to $\Lambda$:
$$
\bar{\rho}\colon G\lra O(\Lambda)
$$
with the property that $\overline{T}=\Lambda^{\bar\rho(\sigma)}$ and by our assumption on $S$ the orthogonal complement of $\overline{T}$ in $\Lambda$ is $S$.
Now as in \cite[\S 11]{dolgachevkondo} we consider the period domain of $(\bar \rho, \bar T)$-polarized K3 surfaces. For $\xi\not= -1$ this is
$$
D_{\overline{T}}^{\rho,\xi}:=\{x\in \IP(S(\xi))\,|\, (x,\bar{x})>0\}
$$ 
and for $\xi=-1$ this is 
$$
D_{\overline{T}}^{\rho,\xi}:=\{x\in \IP(S(\xi))\,|\, (x,x)=0, \,(x,\bar{x})>0\}.
$$ 
Observe that $D_{\overline{T}}^{\rho,\xi}\setminus \Delta$ is not empty because it coincides with  $\Omega_T^{\rho,\xi}\setminus\Delta$ (which is not empty by assumption). The point $\omega\in\Omega_T^{\rho,\xi}\setminus\Delta$ is  then also a point in $D_{\overline{T}}^{\rho,\xi}\setminus\Delta$, hence there exists a $\overline{T}$-polarized K3 surface
$\Sigma$ (see \cite[\S 9]{AST} and \cite[\S 11]{dolgachevkondo}). It has a 
$\overline{T}$-polarization  $\eta: \Lambda\lra H^2(\Sigma,\IZ)$ with $\eta_{|\overline{T}}(\overline{T})\subset \NS(\Sigma)$ and  $\eta(\omega)=H^{2,0}(\Sigma)$. The isometry  $\eta\circ\bar \rho(\sigma)\circ \eta^{-1}$ acts on $H^2(\Sigma,\IZ)$, it is the identity on $j(\overline{T})$ (where $j:=\eta_{|\overline{T}}$) and it preserves $H^{2,0}(\Sigma)$. Let us check that the conditions of Theorem~\ref{namik} are satisfied. Since the line generated by $\omega$ is preserved by $\bar\rho(\sigma)$ we have condition~$i)$. By assumption $S\cap\{\omega\}^{\perp}\cap \Lambda$ does not contain classes of length $-2$, this gives condition~$ii)$. By construction $\overline{T}$ is the $\bar\rho(\sigma)$-invariant sublattice of $\Lambda$  and it is hyperbolic, so it contains a class $\alpha$ with $(\alpha,\alpha)>0$, this gives condition  $iii)'$. 
So there exits $a\in\Aut(\Sigma)$ with  $a^*=\eta\circ\bar \rho(\sigma)\circ \eta^{-1}$ (up to conjugation with an element of the Weyl group). Take  $Y:=\Sigma^{[2]}$ with the marking $L\to H^2(Y,\IZ)=H^2(\Sigma, \IZ)\oplus \langle-2\rangle$ in such a way that its restriction to $\Lambda$ is equal to $\eta$. We still denote this marking by $\eta$. By construction $Y$ admits an automorphism $a^{[2]}$ such that 
$(a^{[2]})^*=\eta\circ\rho(\sigma)\circ \eta^{-1}$, so $Y$ is  $(\rho,T)$-polarized with period $\omega_Y=\omega$. 
\end{proof}

\begin{cor}\label{unicity}
  Let $X$, resp. $X'$ be two \ihsk admitting non-symplectic automorphisms $\sigma$, resp. $\sigma'$ 
of the same order. Assume as above that  $T_{\sigma}(X)=\overline{T}\oplus \langle-2\rangle=T_{\sigma'}(X)$ and $S_{\sigma}(X)=S_{\sigma'}(X)$ have rank at least $p$. 
Then there exists $h\in O(L)$ such that $h\circ \rho(\sigma) \circ h^{-1}=\rho'(\sigma')$
(that is: once fixed the order, the action of $\sigma$ on $L$ is uniquely determined by $T_{\sigma}$ 
and $S_{\sigma}$).
\end{cor}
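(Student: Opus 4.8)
The plan is to move everything onto the fixed abstract lattice $L$ and to show that the two hypotheses pin down the conjugacy class of the induced isometry. First I would fix markings $\eta\colon L\to H^2(X,\IZ)$ and $\eta'\colon L\to H^2(X',\IZ)$ and set $g:=\rho(\sigma)=\eta^{-1}\circ\sigma^\ast\circ\eta$ and $g':=\rho'(\sigma')=\eta'^{-1}\circ\sigma'^\ast\circ\eta'$ in $O(L)$. Both are isometries of order $p$ acting as the identity on the invariant lattice $T=\overline{T}\oplus\langle-2\rangle$; in particular they fix the $\langle-2\rangle$ summand and hence preserve the K3 lattice $\Lambda=\langle-2\rangle^\perp$, on which they have invariant lattice $\overline{T}$ and coinvariant lattice $S=\overline{T}^\perp\cap\Lambda$, with no nonzero fixed vector in $S$. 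Since extending any $\bar h\in O(\Lambda)$ by the identity on $\langle-2\rangle$ yields an element of $O(L)=O(\Lambda\oplus\langle-2\rangle)$, it suffices to conjugate $g|_\Lambda$ to $g'|_\Lambda$ inside $O(\Lambda)$. This reduces the statement to a uniqueness question for the order-$p$ action on $S$ together with its gluing to $\overline{T}$.

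The core step is to show that $g|_S$ and $g'|_S$ are conjugate in $O(S)$. Both have order $p$ and no nonzero fixed vector, so their minimal polynomial is the cyclotomic polynomial $\Phi_p$, and each makes $S$ a Hermitian $\IZ[\zeta_p]$-lattice of rank $m$ (recall $\rank S=(p-1)m$), the hypothesis $\rank S\ge p$ being exactly $m\ge 2$. I would first check that these two Hermitian structures lie in the same genus: away from $p$ the algebra $\IZ[\zeta_p]$ is unramified, so the Hermitian lattice is determined by the underlying (identical) quadratic lattice, while at $p$ the fact that $S$ is $p$-elementary with the prescribed discriminant form pins down the local structure. Granting this, the rank condition $m\ge 2$ over $\IZ[\zeta_p]$ together with the indefiniteness of $S$ (signature $(2,(p-1)m-2)$) forces the Hermitian genus to contain a single isometry class, by a strong-approximation argument for the unitary group paralleling Theorems~\ref{indefigen} and~\ref{kneser} over $\IZ$. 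Hence $(S,g|_S)\cong(S,g'|_S)$, giving $\varphi\in O(S)$ with $\varphi\,(g|_S)\,\varphi^{-1}=g'|_S$. I expect this local-to-global Hermitian uniqueness to be the main obstacle; the hypothesis $\rank S\ge p$ is present precisely to supply it, and it coincides with the standing assumption $\dim N(\xi)\ge 2$ of Theorem~\ref{ball}, so the realization of these actions by K3 automorphisms can be invoked geometrically as a cross-check.

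It remains to upgrade $\varphi$ to a conjugation on all of $\Lambda$, hence on $L$. Here I would use Nikulin's gluing description: the overlattice $\Lambda\supset\overline{T}\oplus S$ corresponds to an isotropic subgroup $H\subset A_{\overline{T}}\oplus A_S$, and since $g$ (resp. $g'$) restricts to the identity on $\overline{T}$, invariance of $H$ forces $g|_S$ and $g'|_S$ to act trivially on the projection $H_S\subseteq A_S$, which in the present $p$-elementary situation is all of $A_S$. The pair $(\id_{\overline{T}},\varphi)$ then extends to an isometry of $\Lambda$ compatible with $H$ as soon as $\varphi$ acts trivially on $A_S$; I would arrange this by replacing $\varphi$ with $\varphi\circ c$ for a suitable $c$ in the centralizer of $g|_S$ (the unitary group $U(S)$, which does not change the conjugation of $g|_S$ into $g'|_S$) and invoking surjectivity of $U(S)\to O(A_S)$ under the rank hypotheses, in the same spirit as the surjectivity of $O(S)\to O(A_S)$ used in Proposition~\ref{discform} and Proposition~\ref{unicityT}. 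Extending the resulting isometry of $\Lambda$ by the identity on $\langle-2\rangle$ produces $h\in O(L)$ with $h\,g\,h^{-1}=g'$, that is $h\circ\rho(\sigma)\circ h^{-1}=\rho'(\sigma')$, as required.
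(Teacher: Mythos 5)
Your opening reduction (both $g:=\rho(\sigma)$ and $g':=\rho'(\sigma')$ fix the $\langle-2\rangle$ summand, hence preserve $\Lambda$; conjugate their restrictions in $O(\Lambda)$ and extend by the identity) coincides with how the paper finishes, and your translation of the conjugacy problem for $g|_S$ and $g'|_S$ into an isometry problem for Hermitian $\IZ[\zeta_p]$-lattices is sound, as is the observation that both act trivially on $A_S$. However, the two pillars of your core argument are asserted rather than proved, and they are exactly where the substance of the statement lies. First, ``one class per Hermitian genus by strong approximation'' is not a routine transposition of Theorems~\ref{kneser} and~\ref{indefigen}: strong approximation holds for the simply connected group $SU$, not for the unitary group $U$ itself; the discrepancy between a Hermitian genus and the isometry classes it contains is then governed by a determinant obstruction valued in an adelic quotient of the norm-one torus of $\IQ(\zeta_p)$ over its maximal real subfield, which does not vanish for free, and the comparison of the two Hermitian structures locally at the ramified prime $p$ (where you appeal to $p$-elementarity) is itself a nontrivial computation. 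None of this follows from the paper's lattice-theoretic toolkit, which concerns quadratic forms over $\IZ$. Second, your gluing step needs the image of $U(S)$ (the centralizer of $g|_S$ in $O(S)$) inside $O(A_S)$ to contain a prescribed element; the surjectivity results you invoke (Proposition~\ref{discform}, Proposition~\ref{unicityT}, i.e.\ Nikulin's criterion) are statements about the full orthogonal group $O(S)$ and do not give this, since the image of the unitary group can a priori be a proper subgroup. (A cheaper variant: the conjugating isometry need only preserve $T$, not fix it pointwise, so one can instead correct on the invariant side using $O(\overline{T})\to O(A_{\overline{T}})$; but that surjectivity also requires verification and can be delicate when $\overline{T}$ is small, e.g.\ $\overline{T}=U(3)$.)

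For contrast, the paper's proof bypasses Hermitian lattice theory altogether: using $\rank S\geq p$ (equivalently $m\geq 2$, i.e.\ $\dim N(\xi)\geq 2$), it runs the construction from the proof of Theorem~\ref{ball} (Namikawa's criterion plus a choice of period) to realize $\bar\rho(\sigma)$ and $\bar\rho'(\sigma')$ as the pullback actions of genuine non-symplectic automorphisms of K3 surfaces with invariant lattice $\overline{T}$ and orthogonal complement $S$, then quotes \cite[Proposition 9.3]{AST} to conjugate them in $O(\Lambda)$, and finally extends by the identity on $\langle-2\rangle$ exactly as you do. Your strategy is a legitimate alternative route and the facts you need are of a type that has been developed in the literature on Hermitian forms and prime-order isometries, but as written steps (i) and (ii) above are genuine gaps, not details.
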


\begin{proof}
We use the same notation as above. As in the proof of Theorem \ref{ball} we can associate to $X$ and $X'$ two K3 surfaces  $\Sigma$  and $\Sigma'$ with 
respective automorphisms $\bar \sigma$ and $\bar \sigma'$ such that $\overline{T}_{\bar \sigma}(\Sigma)= \overline{T}_{\bar \sigma'}(\Sigma')$
 and $S_{\bar \sigma}(\Sigma)=S_{\bar \sigma'}(\Sigma')$. Then by \cite[Proposition 9.3]{AST} $\bar \sigma^*$ and $\bar \sigma'^*$ are conjugated via an element of $ O(\Lambda)$. More precisely if $\bar\rho\colon\langle\sigma\rangle\lra O(\Lambda)$ and $\bar\rho'\colon\langle\sigma'\rangle\lra O(\Lambda)$ are the corestrictions of the analogous morphisms with image in $O(L)$, then
there exists $\bar{h}\in O(\Lambda)$ such that $\bar{h}\circ \bar\rho(\sigma) \circ \bar{h}^{-1}=\bar\rho'(\sigma')$. We define an isometry  $h$ 
of $L=\Lambda\oplus \langle-2\rangle$ by: $h_{|\Lambda}=\bar{h}$  and $h_{|\langle -2\rangle}=\id$. Recall that:
$$
\rho(\sigma)_{|\langle -2\rangle}=\id, \qquad \rho'(\sigma')_{|\langle -2\rangle}=\id.
$$
This implies that $h\circ \rho(\sigma) \circ h^{-1}=\rho'(\sigma')$, hence the statement.
\end{proof}

%%%%%%%%%%%%%%%%%%%%%%%%%%%%%%%%%%%%%%%%%%%%%%%%%%%%%%%%%%%%%%

\section{Examples of automorphisms} \label{s:exampl}

\subsection{Natural automorphisms on Hilbert schemes of points}\label{natural}
Let $\Sigma$ be a K3 surface and $\varphi$ be a non-symplectic automorphism of prime order $p\geq 3$ acting on~$\Sigma$.
By \cite[Theorem 0.1]{AST} the fixed locus $\Sigma^\varphi$ is a disjoint union of curves and points of the form
$$
\Sigma^{\varphi}=C_g\cup R_1\cup\ldots\cup R_k\cup \{p_1,\ldots,p_N\}
$$
where $C_g$ is a smooth curve of genus $g\geq 0$, $R_i$, $i=1,\ldots,k$, are smooth rational curves, and $p_j$,
$j=1,\ldots,N$ are isolated fixed points.
%   Moreover $N\leq 9$, $k\leq 5$.
The isolated fixed points are of different types depending on the local action of $\varphi$ at them. The possible local actions are
\[A_{p,t}=\left(
\begin{array}{cc}
\xi_p^{t+1}&0\\
0&\xi_p^{p-t}\\
\end{array}
\right),\ \ t=0,\dots,p-2; \] here we use the
same notation as in \cite{AST} and we denote by $n_t$ the number of isolated fixed
points corresponding to the local action $A_{p,t}$.

 By using the results of \cite[\S 4.2]{sam} one computes the fixed locus of $\varphi^{[2]}$ on $\Sigma^{[2]}$. It consists of:
\begin{itemize}
\item $N(N-1)/2+2(N-n_{\frac{p-1}{2}})$ isolated fixed points. For this contribution, one has
$\chi=h^*=N(N-1)/2+2(N-n_{\frac{p-1}{2}})$.
\item $(n_{\frac{p-1}{2}}+Nk+k)$ smooth rational curves: $n_{\frac{p-1}{2}}$ for each fixed point at which the local action has two equal
eigenvalues, $Nk$ for each couple of a fixed point and a rational curve, and $k$ for each rational curve. This last case comes from the fact that
taking the schemes of length 2 of $\Sigma^{[2]}$ over a point of a rational curve, we get a curve isomorphic to $\IP^1$ contained in the exceptional
set (we also get a surface isomorphic to $\IP^2=(\IP^1)^{[2]}$, this contribution is taken into account below). For this contribution, one has
$\chi=h^*=2(n_{\frac{p-1}{2}}+Nk+k)$.
\item $N+1$ curves isomorphic to $C_g$, one for each couple consisting of a fixed point $p_j$ and the curve $C_g$, and one for the curve $C_g$.
The explanation for this last contribution is the same as above in the case of the rational curves. Here $\chi=(N+1)(2-2g)$,
$h^*=(N+1)(2+2g)$. 
\item $k(k-1)/2$ surfaces isomorphic to $\IP^1\times\IP^1$, one for each couple of distinct rational curves.
Here $\chi=h^*=4 (k(k-1)/2)$.
\item $k$ surfaces isomorphic to $\IP^1\times C_g$, one for each rational curve. Here $\chi=(4-4g) k$, $h^*=(4+4g)k$.
\item $k$ surfaces isomorphic to $\IP^2=(\IP^1)^{[2]}$, one for each rational curve. Here $\chi=h^*=3k$.
\item one surface $(C_g)^{[2]}$, which is the Hilbert scheme of two points on $C_g$. Here $\chi=3+2g^2-5g$, $h^*=3+2g^2+3g$.   
\end{itemize} 
As a consequence, the fixed locus of $\varphi^{[2]}$ on $\Sigma^{[2]}$ has invariants:
\begin{align*}
\chi&=2(n_{\frac{p-1}{2}}+Nk+k)+N(N-1)/2\\
&\quad+2(N-n_{\frac{p-1}{2}})+4k(k-1)/2+3k+(N+1)(2-2g)\\
&\quad+k(4-4g)+3-5g+2g^2\\
&=(1/2)(2g-2-N-2k)(2g-5-N-2k),\\
h^*&=N^2/2+7N/2+2Nk+7k+2Ng+5+5g+2k^2+4kg+2g^2.\\
\end{align*}
Observe that $\varphi^{[2]}$ acts non-symplectically on $\Sigma^{[2]}$. In fact there is an injective
morphism $\iota: H^2(\Sigma,\IC)\lra H^2(\Sigma^{[2]}, \IC)$ such that 
$$
H^2(\Sigma^{[2]}, \IC)=\iota(H^2(\Sigma,\IC))\oplus \IC[E]
$$ 
where $E$ denotes the exceptional set and $\iota$ respects the Hodge decomposition  (see \cite[Proposition 6]{BeauvilleKaehler}). 
If $\alpha\in H^{2,0}(\Sigma)$ then $\iota(\alpha)\in H^{2,0}(\Sigma^{[2]})$ and by the definition of~$\iota$ 
one has $\varphi^{[2]}(\iota(\alpha))=\iota(\varphi(\alpha))$ (see \cite{BSbirat}).  
This implies that $S_\varphi(\Sigma)=S_{\varphi^{[2]}}(\Sigma^{[2]})$ and $\langle -2 \rangle \oplus T_\varphi(\Sigma)=T_{\varphi^{[2]}}(\Sigma^{[2]})$.
In the tables \ref{ord3}, \ref{ord7}, \ref{ord11}, \ref{ord13}, \ref{ord17}, \ref{ord19} we mark with a  $\clubsuit$ the cases that are realised with natural automorphisms.

\begin{rem}
Theorem \ref{BNWS} for order $5$ automorphisms  holds only for natural automorphisms, 
so in Table~\ref{ord5} the list of admissible triples $(5,m,a)$ is only in this special case. 
For the moment, there are no other known examples of non-symplectic
automorphisms of order five (see also Remark~\ref{remvictor}).
\end{rem}

%%%%%%%%%%%%%%%%%%%%%%%%%%%%%%%%%%%%%%%%%%%%%%%%%%%%%%%%%%%%%%%%%%%%%%%%%%%%

\subsection{The Fano variety of lines on a cubic fourfold}\label{fano}
Let $V$ be a smooth cubic hypersurface in $\IP^5$, the  {\it Fano variety of lines} on $V$ is defined as 
$$
F(V):=\{l\subset \Gr(1,5)|l\subset V\}.
$$
By \cite{beauvilledonagi} the variety $F(V)$ is an \ihskpt 
One can construct examples of non-symplectic automorphisms of prime order by starting with an automorphism
of a cubic hypersurface in $\IP^5$ and then looking at the induced automorphism on $F(V)$.
By a classical result of Matsumura and Monsky \cite{mm} the automorphism group of a cubic hypersuface in $\IP^5$ is finite, the automorphisms are induced by
linear automorphisms of $\IP^5$ and a generic cubic hypersurface in $\IP^5$ has no automorphism. All automorphisms of prime 
order of smooth cubic fourfolds are classified in \cite[Theorem 3.8]{victor}. We are interested in those that induce a non-symplectic automorphism on $F(V)$. 
 
Denote by $Z\subset F(V)\times V$ the universal family and by 
$p$, resp. $q$, the projection to $F(V)$, resp. $V$.
By \cite[Proposition 4]{beauvilledonagi} the 
Abel-Jacobi map 
$$
A:=p_\ast q^\ast\colon H^4(V,\IZ)\lra H^2(F(V),\IZ)
$$ 
is an isomorphism of Hodge 
structures with $A(H^{3,1}(V))\cong H^{2,0}(F(V))$. If $\sigma$ is an automorphism of $V$, by equivariance
of $A$ one has $\sigma^\ast(\omega_{F(V)})=A(\sigma^\ast(A^{-1}(\omega_{F(V)})))$.
Let $f:=f(x_0,\ldots,x_5)$ be the cubic polynomial whose zero set in $\IP^5$ is $V$. 
By Griffith's residue theorem (see~\cite[Proposition~18.2]{voisin}) the cohomology group $H^{3,1}(V)$ is one-dimensional
and generated by the residue $\Res\left(\frac{\omega_{\IP^5}}{f^2}\right)$, where 
$$
\omega_{\IP^5}=\sum_{i} (-1)^i x_i dx_0\wedge\ldots\wedge \widehat{dx_i}\wedge \ldots \wedge dx_5.
$$
Assume that $\sigma$ is an automorphism of $\IP^5$ leaving $V$ globally invariant. The form $\frac{\omega_{\IP^5}}{f^2}$ is a closed meromorphic $5$-form on $\IP^5$, holomorphic on $\IP^5\setminus V$, with poles of order two along $V$, that 
we consider as a differential form on $U:=\IP^5\setminus V$. 
Taking its cohomology class we get an element in $H^5(U,\IC)$. There is a natural 
$\sigma$-equivariant isomorphism $H^5(U,\IC)\cong H^5(\IP^5,\Omega^\bullet_{\IP^5}(\log V))$. 
For a form with logarithmic pole $\alpha\wedge \frac{df}{f}\in \Omega^\bullet_{\IP^5}(\log V)$,
with $\alpha\in\Omega_{\IP^5}^{\bullet-1}$, 
one has by definition $\Res\left(\alpha\wedge \frac{df}{f}\right)=2\rm{i}\pi\alpha_{|V}$ (see \cite[\S18.1.1]{voisin}). 
Assume that $\sigma$ acts diagonally, that is $\sigma(x_0,\ldots,x_5)=(\alpha_0x_0,\ldots,\alpha_5x_5)$ and denote $\det(\sigma):=\prod_i\alpha_i$. Then $\sigma^\ast\omega_{\IP^5}=\det(\sigma)\omega_{\IP^5}$. 
Assume furthermore that $f$ is a projective invariant for $\sigma$, that is $\sigma^\ast f=\lambda_\sigma f$ with $\lambda_\sigma\in\IC^\ast$. Then $\sigma^\ast \left(\frac{df}{f}\right)=\frac{df}{f}$ so the map $\Res\colon H^5(U,\IC)\to H^4(V,\IC)$ is 
$\sigma$-equivariant. It follows that:
$$
\sigma^\ast\Res\left(\frac{\omega_{\IP^5}}{f^2}\right)=\frac{\det(\sigma)}{\lambda_\sigma^2}\Res\left(\frac{\omega_{\IP^5}}{f^2}\right).
$$
This proves the following result:

\begin{lemma}\label{actioncubic}
Let $\sigma$ be a diagonal automorphism of $V$. 
If the homogeneous polynomial of degree three $f\in \IC[x_0,\ldots, x_5]$ which defines $V$ is a
projective invariant for the action of $\sigma$, with $\sigma^\ast f=\lambda_\sigma f$ then the action of $\sigma$ on $F(V)$ is non-symplectic if and only if $\frac{\det(\sigma)}{\lambda_\sigma^2}\not=1$.   
\end{lemma}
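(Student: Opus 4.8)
The plan is to reduce the assertion to a single eigenvalue computation. By definition $\sigma$ acts non-symplectically on $F(V)$ exactly when $\sigma^\ast$ does not fix the symplectic form, and since $F(V)$ is an \ihsk the space $H^{2,0}(F(V))=\IC\,\omega_{F(V)}$ is one-dimensional, so $\sigma^\ast$ acts on it by a single scalar $\mu\in\IC^\ast$. The whole statement therefore amounts to showing that $\mu=\frac{\det(\sigma)}{\lambda_\sigma^2}$, since then $\sigma$ is non-symplectic if and only if $\mu\neq 1$.

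First I would transport the computation from $F(V)$ to $V$ using the Abel--Jacobi isomorphism $A=p_\ast q^\ast$. As the universal family $Z\subset F(V)\times V$ is stable under the diagonal action induced by $\sigma$, the map $A$ is $\sigma$-equivariant, and it is an isomorphism of Hodge structures sending $H^{3,1}(V)$ onto $H^{2,0}(F(V))$. Hence the relation $\sigma^\ast\omega_{F(V)}=A\bigl(\sigma^\ast(A^{-1}(\omega_{F(V)}))\bigr)$ shows that $\mu$ equals the scalar by which $\sigma^\ast$ acts on the one-dimensional space $H^{3,1}(V)$, which by Griffiths' residue theorem is generated by $\Res\!\left(\frac{\omega_{\IP^5}}{f^2}\right)$.

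It then remains to compute this scalar. The diagonality of $\sigma$ gives $\sigma^\ast\omega_{\IP^5}=\det(\sigma)\,\omega_{\IP^5}$, and the hypothesis $\sigma^\ast f=\lambda_\sigma f$ gives $\sigma^\ast(\frac{df}{f})=\frac{df}{f}$, hence the $\sigma$-equivariance of the residue map $\Res$; therefore $\sigma^\ast\Res\!\left(\frac{\omega_{\IP^5}}{f^2}\right)=\frac{\det(\sigma)}{\lambda_\sigma^2}\Res\!\left(\frac{\omega_{\IP^5}}{f^2}\right)$. Combining with the previous step yields $\sigma^\ast\omega_{F(V)}=\frac{\det(\sigma)}{\lambda_\sigma^2}\,\omega_{F(V)}$, which gives the claimed equivalence. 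The one point requiring genuine care — and which I regard as the main technical step rather than the final bookkeeping — is the $\sigma$-equivariance of $\Res$: I would obtain it by passing to logarithmic poles through the $\sigma$-equivariant isomorphism $H^5(U,\IC)\cong H^5(\IP^5,\Omega^\bullet_{\IP^5}(\log V))$ and using $\Res(\alpha\wedge\frac{df}{f})=2\mathrm{i}\pi\,\alpha_{|V}$, so that the invariance $\sigma^\ast(\frac{df}{f})=\frac{df}{f}$ indeed makes $\Res$ commute with $\sigma^\ast$. Everything else is a direct consequence of the projective invariance of $f$ and the diagonal form of $\sigma$.
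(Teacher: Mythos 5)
Your proposal is correct and is essentially the paper's own proof: the paper likewise transports the action to $H^{3,1}(V)$ via the $\sigma$-equivariant Abel--Jacobi isomorphism $A=p_\ast q^\ast$ and computes the scalar $\frac{\det(\sigma)}{\lambda_\sigma^2}$ on the Griffiths generator $\Res\left(\frac{\omega_{\IP^5}}{f^2}\right)$, using exactly the same equivariance of the residue map deduced from $\sigma^\ast\left(\frac{df}{f}\right)=\frac{df}{f}$ and the logarithmic-pole description of $H^5(U,\IC)$. Your repackaging of the argument as an eigenvalue computation for the scalar $\mu$ acting on the one-dimensional space $H^{2,0}(F(V))$ is only a cosmetic difference.
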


Looking inside the classification of \cite[Theorem 3.8]{victor} we see that examples of
non-symplectic automorphisms occur only for $p=2,3$. In this section we consider only
the case $p=3$ and we find four families of examples, for which we compute now the fixed locus and the lattices
 $T$ and $S$. Put $\xi:=\exp(2\pi i/3)$.

\begin{rem}\label{remvictor}
There is a small mistake in the classification of \cite{victor}: the cubics 
of the family denoted $\mathcal{F}_5^2$, that would have a non-symplectic automorphism 
of order $5$, are in fact all singular (as confirmed by the authors). So this family should not
be in the list.
\end{rem}

\begin{example}\label{fanononnat1}(Case $T=\langle 6\rangle$). Consider the automorphism of order $3$ of $\IP^5$ given by:
$$
\sigma_1(x_0:x_1:x_2:x_3:x_4:x_5)=(x_0:x_1:x_2:x_3:x_4:\xi x_5).
$$
The family of invariant cubics is 
$$
V_1\colon L_3(x_0,\ldots,x_4)+x_5^3=0
$$
where $L_3$ is a homogeneous polynomial of degree $3$. The fixed locus of $\sigma_1$ on $V$ is the
cubic 3-fold $\mathcal{C}:=\{x_5=0,\, L_3(x_0,\ldots,x_4)=0\}$. The fixed points on $F(V_1)$ correspond to 
$\sigma_1$-invariant lines on $V$. If a line $L\subset V$ is invariant then either it is
pointwise fixed or it contains two fixed points. If $L$ is pointwise fixed then it is contained
in $\mathcal{C}$, if it is only invariant but not pointwise fixed,  then $L$ intersects  $\mathcal{C}$ in two points.  Hence  it intersects also the hyperplane $\{x_5=0\}$ in two points so it is contained in it.
Hence $L\subset V\cap  \{x_5=0\}$, this means that $L$ is contained in $\mathcal{C}$. We call again $\sigma_1$ the induced automorphism on $F(V_1)$, this is non-symplectic by Lemma~\ref{actioncubic}.
We have shown that its fixed locus is the Fano surface $F(\mathcal{C})$ of $\mathcal{C}$. This is a well known surface of general type with Hodge numbers: $h^{1,0}=h^{0,1}=5$, $h^{0,2}=h^{2,0}=10$, $h^{1,1}=25$. Hence one computes $\chi(F(V_1)^{\sigma_1})=27$ and $h^*(F(V_1)^{\sigma_1}, \IF_3)=67$. By using the formulas
\eqref{fp} and \eqref{chi} we get $m=11$, $a=1$ so looking in the table
we have $S_{\sigma_1}(F(V_1))=U^{\oplus 2}\oplus E_8^{\oplus 2}\oplus A_2$ and $T_{\sigma_1}(F(V_1))=\langle 6\rangle$. Finally observe that the dimension of the family $F(V_1)$ is 10, which is also $\rank(S_{\sigma_1}(F(V_1)))/2-1$ . 
\end{example}

\begin{example}\label{fanofixed}(Case $T=U\oplus A_2^{\oplus 5}\oplus \langle -2 \rangle$). 
Consider the automorphism of order $3$ of $\IP^5$ given by:
$$
\sigma_2(x_0:x_1:x_2:x_3:x_4:x_5)=(x_0:x_1:x_2:x_3:\xi x_4:\xi x_5).
$$
The family of invariant cubics is 
$$
V_2\colon L_3(x_0,x_1,x_2)+M_3(x_4,x_5)=0
$$
where $L_3$ and $M_3$ are homogeneous polynomials of degree $3$. The fixed locus of 
$\sigma_2$ on $V_2$ is  $\{x_0=x_1=x_2=x_3=0, M_3(x_4,x_5)=0\}$ which are three distinct points $p_1,p_2,p_3$ 
and the cubic surface $K$ of $\IP^3$ given by $\{x_4=x_5=0,L_3(x_0,x_1,x_2)=0\}$. An invariant line
through $2$ points must contain also the third point and in fact it is the line
$\{x_0=x_1=x_2=x_3=0\}$, which is not contained in $V_2$. An invariant line through  $p_i$ and a point
of $K$ is contained in $V_2$. So on $F(V_2)$ we have three fixed surfaces isomorphic to the rational
cubic $K$, this has $h^2(K,\IZ)=h^{1,1}(K)=7$. 
Moreover each fixed line on $K$ determine a fixed point on $F(V_2)$ so we have $27$ isolated fixed 
points. By Lemma \ref{actioncubic} the induced automorphism on $F(V_2)$, that we call again $\sigma_2$,  
is non-symplectic. Using the fact that the odd cohomology of the fixed locus is zero we have:
$$
\chi(X^{\sigma_2})=h^*(X^{\sigma_2},\IF_3)=3(2+7)+27=54.
$$ 
Then one computes that $m=a=5$ by using the formulas \eqref{chi} and \eqref{fp}.
 Looking in the table we have 
$S_{\sigma_2}(F(V_2))=U\oplus U(3)\oplus A_2^{\oplus 3}$ and 
$T_{\sigma_2}(F(V_2))=U\oplus A_2^{\oplus 5}\oplus \langle -2\rangle $. Finally the dimension 
of the family is 4 which is equal to $\rank(S_{\sigma_2}(F(V_2)))/2-1$. 
 Observe that
 this automorphism does not have the same fixed locus as the natural automorphism
 on a Hilbert scheme with the same lattices $S$ and $T$. 
\end{example}

\begin{example}\label{fanononnat2}
(Case $T=\langle 6\rangle\oplus E_6^\vee(3)$). Consider the automorphism of order $3$ of $\IP^5$ given by:
$$
\sigma_3(x_0:x_1:x_2:x_3:x_4:x_5)=(x_0:x_1:x_2:\xi x_3:\xi x_4:\xi^2 x_5).
$$
The family of invariant cubics is 
$$
V_3\colon L_3(x_0,x_1,x_2)+M_3(x_3,x_4)+x_5^3+x_5(x_3L_1(x_0,x_1,x_2)+x_4M_1(x_0,x_1,x_2))=0
$$
where $L_3$ and $M_3$ are homogeneous polynomials of degree 3 and $L_1$ and $M_1$
are linear forms. The fixed locus of $\sigma_3$ on $V_3$ is the union of the 
 elliptic curve $E:\{x_3=x_4=x_5=0,L_3(x_0,x_1,x_2)=0\}$ and of the $3$ isolated 
 fixed points $q_1,q_2,q_3$ given by $\{x_0=x_1=x_2=x_5=0,M_3(x_3,x_4)=0\}$. Any invariant line containing 
 two fixed points on $E$ is contained in the plane $\{x_3=x_4=x_5=0\}$ so it cannot be
 contained on $V_3$. On the other hand a line through two of the points $q_j$ must contain also the third 
 point and it is the line $\{x_3=x_4=x_5=0\}$ which is not contained in $V_3$. 
 Finally all invariant lines through a point $q_j$, $j=1,2,3$ and a point of $E$ are contained in 
 $V_3$ so $F(V_3)^{\sigma_3}$ containes $3$ elliptic curves isomorphic to $E$. 
 By Lemma \ref{actioncubic}  the induced automorphism on $F(V_3)$ is non-symplectic. 
 One computes then $\chi(X^{\sigma_3})=0$ and $h^*(X^{\sigma_3},\IF_3)=12$ so by using the formulas \eqref{fp} and \eqref{chi}
 we get $m=8, a=6$. By looking in the Table \ref{ord3} one finds that $S_{\sigma_3}(F(V_3))=U^{\oplus 2}\oplus A_2^{\oplus 6}$,
 $T_{\sigma_3}(F(V_3))=\langle 6\rangle \oplus E_6^\vee(3)$. Finally the family of varieties 
 $F(V_3)$ is $7$-dimensional which is equal to $\rank(S_{\sigma_3}(F(V_3)))/2-1$.
\end{example}

\begin{example}\label{fanononnat3}(Case $T=U(3)\oplus E_6^\vee(3)\oplus \langle -2\rangle$). 
Consider the automorphism of order $3$ of $\IP^5$ given by:
$$
\sigma_4(x_0:x_1:x_2:x_3:x_4:x_5)= (x_0:x_1:\xi x_2:\xi x_3:\xi^2 x_4:\xi^2 x_5).
$$
The family of (projective) invariant cubics is 
\begin{align*}
V_4&: x_2L_2(x_0,x_1)+x_3M_2(x_0,x_1)+x_4^2L_1(x_0,x_1)+x_4x_5M_1(x_0,x_1)\\
&\quad +x_5^2N_1(x_0,x_1)+x_4N_2(x_2,x_3)+x_5P_2(x_2,x_3)=0
\end{align*}
where $L_2$, $M_2$, $N_2$ and $P_2$ are homogeneous polynomial of degree 2 and $L_1$, $M_1$ and $N_1$ are
linear factors.  The fixed locus of $\sigma_4$ on $V_4$ is the union of the three projective lines
$L_1$, $L_2$, $L_3$ of equations $\{x_0=x_1=x_2=x_3=0\}$, $\{x_0=x_1=x_4=x_5=0\}$, $\{x_2=x_3=x_4=x_5=0\}$.
Each line determines a fixed point on $F(V_4)$. On the other hand an invariant line $L$ intersects two of
the lines $L_i$ in one point each. Take a line $L$ intersecting for example the lines $L_1$ and $L_2$ 
in the point $(0:0:0:0:p_4:p_5)$ respectively $(0:0:q_2:q_3:0:0)$, then this points satisfy
the equation $p_4N_2(q_2,q_3)+p_5P_2(q_2,q_3)=0$. This is a rational curve on the surface
$L_1\times L_2\cong \IP^1\times \IP^1$, hence we have a fixed rational curve on $F(V_4)$. In the
same way taking the lines $L_1,L_3$ and $L_2,L_3$ we get $2$ rational fixed curves  on $F(V_4)$. 
By Lemma \ref{actioncubic} $\sigma_4$ acts non-symplectically on $F(V_4)$. One computes $\chi(F(V_4)^{\sigma_4})=9$ and
$h^*( F(V_4)^{\sigma_4},\IF_3)=9$, checking in the Table \ref{ord3} one finds that  $S_{\sigma_4}(F(V_4))=U\oplus U(3)\oplus A_2^{\oplus 5}$,
 $T_{\sigma_4}(F(V_4))=U(3) \oplus E_6^\vee(3)\oplus \langle -2\rangle$. Finally the family of varieties 
 $F(V_4)$ is $6$-dimensional which is equal to $\rank(S_{\sigma_4}(F(V_4)))/2-1$. 
 Observe that this automorphism has the same fixed locus as the natural automorphism
 on a Hilbert scheme with the same lattices $S$ and $T$. 
\end{example}

In the tables \ref{ord3}, \ref{ord7}, \ref{ord11}, \ref{ord13}, \ref{ord17}, \ref{ord19} we mark with a  $\diamondsuit$ the cases that are realised with automorphisms on the Fano variety of lines of a cubic fourfold.

\begin{rem}
Let $ V $ be a 6-dimensional vector space.  The wedge product $ \wedge:\wedge^3V\times\wedge^3V
\longrightarrow \wedge^6V  $ induces a sympletic form $ \omega$ on $ \wedge^3V $ by choosing an isomorphism $ \wedge^6V\cong\mathbb{C} $. Let us take a Lagrangian
subspace $ A \subset \wedge^3V$ and let us define $Y_A:=\left\lbrace v\in \IP (V)/(v\wedge\wedge^2 V)\cap A\neq 0\right\rbrace  $; for general $ A
$, $ Y_A $ is a hypersurface  of degree 6 of the type described by Eisenbud--Popescu--Walter. Such a hypersurface is not smooth, but for a general $A$ it has a smooth double cover $ X_A $ which is an \ihsk (see \cite{OG}). 
One can construct automorphisms of $X_A$ induced by automorphisms of prime order of $V$ as done in \cite{Camere} and in \cite{MongardiPhD}. Direct computations show that
using this method one can only construct non-symplectic automorphisms of order two, and these examples are already explained in \loccit
\end{rem}

\section{Existence of automorphisms}

Starting from our list of all admissible values of $(p,m,a)$ we want to realize each
case by an automorphism, using Theorem~\ref{ball} and the examples of Section~\ref{s:exampl}. The following result shows that it is always possible, except in one case:

\begin{theorem}\label{exi}
For every admissible value of $(p,m,a)\neq(13,1,0)$ there exists an \ihsk  with a non-symplectic automorphism $\sigma$ of order $p$ whose invariant lattice $T_{\sigma}$ and orthogonal lattice $S_{\sigma}$ are those characterized in Theorem~\ref{th:lattice}. The fixed locus is a disjoint union of isolated fixed points, smooth curves and smooth surfaces whose invariants $h^\ast$ and $\chi$ are
given in formulas \eqref{fp} and \eqref{chi}.
\end{theorem}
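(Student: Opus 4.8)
The plan is to prove \emph{existence} triple by triple, since the rest of the statement is automatic. The invariants $h^\ast$ and $\chi$ of the fixed locus are deformation invariants of the pair $(X,\sigma)$, determined by $(p,m,a)$ through Formulas~\eqref{fp} and \eqref{chi} (Section~4), so they need not be recomputed for each construction. Moreover the fixed locus of a non-symplectic automorphism is smooth, and since $\sigma^\ast\omega_X=\xi\omega_X$ with $\xi\neq 1$ forces $\omega_X$ to restrict to zero on its tangent spaces, each component is isotropic for $\omega_X$ and hence of complex dimension at most two. Thus the fixed locus is automatically a disjoint union of isolated points, smooth curves and smooth surfaces, and what remains is to exhibit, for each admissible $(p,m,a)\neq(13,1,0)$, one \ihsk carrying a non-symplectic automorphism of order $p$ with the lattices $T$ and $S$ of Theorem~\ref{th:lattice}.

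For the main family of cases I would use Theorem~\ref{ball}. Since Theorem~\ref{th:lattice} already gives the explicit isometry class of $T$ (recorded in the tables), I would simply read off whether $T\cong\overline{T}\oplus\langle -2\rangle$ for an even $p$-elementary lattice $\overline{T}$ of signature $(1,21-(p-1)m)$ with $A_{\overline{T}}\cong(\IZ/p\IZ)^{\oplus a}$; entries such as $U\oplus A_2^{\oplus 5}\oplus\langle -2\rangle$ split off the class $\langle -2\rangle$ visibly. When such a splitting holds and $m\geq 2$, I would next embed $\overline{T}$ primitively in the K3 lattice $\Lambda=U^{\oplus 3}\oplus E_8^{\oplus 2}$ with orthogonal complement $S$: this follows from Nikulin's criterion exactly as in Proposition~\ref{discform}, because $\overline{T}$ and $S$ are both $p$-elementary with opposite discriminant forms and $\rank\overline{T}+\rank S=22$. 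Identifying $T$ with $\overline{T}\oplus\langle -2\rangle\subset L$, one has $S\subset\Lambda$, and since $\dim N(\xi)=m\geq 2$, Theorem~\ref{ball} produces an \ihsk of the form $\Sigma^{[2]}$ with the natural automorphism $\varphi^{[2]}$ of order $p$ realizing the prescribed $T$ and $S$; its fixed locus is the one computed in Section~\ref{natural}.

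It then remains to handle the triples with $m=1$ and those for which $T$ does not split. When $m=1$ one has $\dim N(\xi)=1$, so Theorem~\ref{ball} degenerates and is unavailable; here I would argue directly, taking $\Sigma^{[2]}$ for a K3 surface $\Sigma$ equipped with a (rigid) non-symplectic automorphism of order $p$ drawn from the classification of~\cite{AST}. Every admissible $m=1$ triple corresponds to such a K3 automorphism, and it is precisely here that the single exception is lost: the unique order-$13$ non-symplectic automorphism on a K3 surface has $a=1$, so no K3 realizes the triple $(13,1,0)$, and no Fano model is available for $p=13$. For $p=3$ and the triples whose $T$ admits no summand $\langle -2\rangle$ --- for instance $(3,11,1)$, where $T=\langle 6\rangle$ is positive definite, and $(3,8,6)$, where Theorem~\ref{Ruda} forbids the hyperbolic $3$-elementary lattice of rank $6$ with $a=6$ that $\overline{T}$ would have to be --- the natural construction is impossible, and I would invoke the Fano-variety-of-lines examples of Section~\ref{fano} (Examples~\ref{fanononnat1}--\ref{fanononnat3}), which realize exactly these remaining triples together with their fixed loci and lattices.

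The hard part is the case-by-case lattice bookkeeping, and in particular deciding for each admissible triple whether $T$ splits as $\overline{T}\oplus\langle -2\rangle$ with $\overline{T}$ a genuine hyperbolic $p$-elementary lattice: this hinges on the delicate congruence conditions of Theorem~\ref{Ruda}, and its failure is precisely what forces certain $p=3$ triples to be non-natural and to require the geometric Fano construction rather than Theorem~\ref{ball}. The triple $(13,1,0)$ is genuinely special because it combines $m=1$ --- so the ball construction, which otherwise fills the moduli space densely, collapses to a point --- with the absence of both a matching order-$13$ K3 automorphism and any Fano model, leaving no construction within reach; accordingly it is excluded from the existence statement rather than proved impossible.
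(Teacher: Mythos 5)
Your proposal is correct and follows essentially the same route as the paper's proof: the same three-way case division (Theorem~\ref{ball} when $\rank S>p-1$, i.e.\ $m\geq 2$, realized by natural automorphisms via the K3 classification; the rigid $m=1$ cases handled directly by K3 surfaces with non-symplectic automorphisms of order $p$; and the two non-split triples $(3,11,1)$ and $(3,8,6)$ realized by Examples~\ref{fanononnat1} and~\ref{fanononnat2} on Fano varieties of lines), together with the same reason for excluding $(13,1,0)$. The only genuine divergence is the bound on the dimension of fixed components: the paper linearizes $\sigma$ at a fixed point and analyzes the eigenvalues of a $4\times 4$ matrix $M$ satisfying $M^TJM=\xi_pJ$, whereas you observe that $\sigma^\ast\omega_X=\xi\omega_X$ with $\xi\neq 1$ forces each component of the fixed locus to be isotropic, hence of dimension at most two. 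Your isotropy argument is valid and cleaner; what the paper's eigenvalue computation buys in exchange is finer local information about the possible linearizations (exploited in the Pfaffian remark following the proof), which your argument does not recover.
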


\begin{proof}  We refer to Tables  \ref{ord3}, \ref{ord7}, \ref{ord11}, \ref{ord13}, \ref{ord17}, \ref{ord19}  in Appendix~\ref{app:tables} for the isometry class
of the lattices and the values of $\chi$ and $h^*$, that depend only on $(p,m,a)$.

\par{\it Existence.} Observe that by Proposition \ref{unicityT} the lattice $T$ has a unique primitive embedding in $L$. So except for $(p,m,a)=(3,11,1)$ and $(p,m,a)=(3,8,6)$ it is not a restriction to assume that $T=\overline{T}\oplus \langle -2\rangle$ with $\overline{T}$ primitively embedded in $\Lambda$. Moreover by Theorem~\ref{indefigen} the lattice $S$ is uniquely determined (the case $S=A_2(-1)$ is a direct computation) and it is in fact the orthogonal complement of $\overline{T}$ in $\Lambda$. 
Assume that $T=\overline{T}\oplus \langle-2\rangle$. If $\rank S>p-1$ the existence follows 
from Theorem~\ref{ball}, the moduli space is a complex ball of dimension $\dim S(\xi)-1$ and these cases are realized by  natural automorphisms on
 the Hilbert scheme of points of a K3 surface (see  \S~\ref{natural} for explicit examples). 
If $\rank S=p-1$ then $\dim\Omega_T^{\rho,\xi}=0$ and the existence follows again from the existence of a K3 surface
with a non-symplectic automorphism of order $p$ with lattice $\overline{T}$ and $S$.
The remaining cases $(3,11,1)$ and $(3,8,6)$ are realized by automorphisms on Fano varieties of lines on cubic fourfolds in Example~\ref{fanononnat1} and Example~\ref{fanononnat2}..

\par{\it Fixed locus.} It is clear that the fixed locus is smooth, we prove that the maximal dimension of a fixed component is two. By the Local Inversion Theorem, since $\sigma$ has finite order its action at the neighbourhood of a fixed point $x$ can be linearized as an action of a 4$\times$4-matrix~$M$. The dimension of the fixed locus at $x$ is the multiplicity of $1$ as an eigenvalue of $M$. It follows that the fixed locus is the disjoint union of smooth connected subvarieties. Since $\sigma$ is non-symplectic, one has $M^TJM=\xi_p J$ where $J=\left(\begin{matrix} 0& I_2\\ -I_2& 0\end{matrix}\right)$ is the standard symplectic form and $\xi_p$ is some primitive $p$-th root of the unity. Since $M^{-1}=\xi_p^{-1}J^{-1}M^TJ$ one observes that if $\lambda$ is an eigenvalue of $M$, then $\xi_p/\lambda$ is also an eigenvalue of $M$ with the same multiplicity. It follows that there are two possible sequences of eigenvalues for $M$: $(1,\xi_p,\xi_p^{(p+1)/2})$ with multiplicities $(a,a,b)$ or $(1,\xi_p,\xi_p^i,\xi_p^{-i+1})$ with $2i\not\equiv 1\mod (p)$ with multiplicities $(a,a,b,b)$. Since the sum of the multiplicites equals $4$, we conclude that $a\leq 2$ so the maximal dimension of a fixed component is two.
\end{proof}

\begin{rem}
One can get some extra information on the local action of an automorphism $\sigma$ at the neighbourhood 
of a fixed point
by using the Pfaffian. With the same notation as in the proof above, one has:
$$
\xi_p^2=\Pf(\xi_p J)=\Pf(M^TJM)=\Pf(J)\det(M)=\det(M).
$$
In the first case mentioned in the proof, the eigenvalues are $(1,\xi_p,\xi_p^{(p+1)/2})$ with multiplicities $(a,a,b)$
such that $2a+b=4$ and the equation above gives $a+\frac{p+1}{2}b\equiv 2\mod (p)$. In the second case, the eigenvalues are $(1,\xi_p,\xi_p^i,\xi_p^{-i+1})$ with $2i\not\equiv 1\mod (p)$ with multiplicities $(a,a,b,b)$ such that $a+b=2$.
\end{rem} 

\begin{rem}
The  case $(13,1,0)$ can not be realized by a natural automorphism for the following reason. Suppose that there exists an 
holomorphic symplectic manifold $X$ with a natural
 automorphism $\sigma$ with invariants $(p,m,a)=(13,1,0)$, $S=U\oplus U\oplus E_8$ 
and invariant lattice $T=U\oplus E_8\oplus \langle -2\rangle$.
If this automorphism is natural then there exists  a K3 surface with a non-symplectic automorphism 
of order $13$ and $\overline{T}=U\oplus E_8$, 
$\bar{S}=U\oplus U\oplus E_8$, but by \cite[Theorem 4.3]{Kondo} such a K3 surface does not exist.
This situation is similar to the case $p=23$  (the maximum prime order for a non-symplectic
automorphism on such varieties, see \cite[\S~5.4]{BNWS}) that cannot be realized by a natural automorphism.
It is an open problem to construct (or exclude) such non-natural automorphisms  of order $13$ or $23$ on \ihskpt
\end{rem}

\begin{rem}
In the case $p=2$ one can do a similar construction as above, but the situation is more complicated. E.g. we can have several non equivalent embeddings of $T$ with different orthogonal complements $S$, which give then different moduli spaces (see Proposition~\ref{2-elem} below and \cite{Malek} for a description of the moduli spaces).
\end{rem}

\begin{cor}
Let $X$ be an \ihsk with a non-symplectic automorphism $\sigma$ of prime order $3\leq p\leq 19$, $p\not=5$. If
$(p,m,a)\notin\{(3,8,6),(3,11,1),(13,1,0)\}$ then the action is unique in the sense of Corollary \ref{unicity}. 
\end{cor}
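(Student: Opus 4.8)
The plan is to deduce the statement directly from Corollary~\ref{unicity} by verifying its hypotheses for every admissible triple outside the excluded set. By Theorem~\ref{th:lattice} the data $(p,m,a)$ determines the isometry classes of $T=T_\sigma(X)$ and $S=S_\sigma(X)$, and by Proposition~\ref{unicityT} the primitive embedding of $T$ in $L$ is unique; hence any two non-symplectic automorphisms $\sigma,\sigma'$ of order $p$ on \ihsk with the same $(p,m,a)$ have isometric invariant and anti-invariant lattices embedded in $L$ in the same way. It therefore suffices to check, case by case, the two facts underlying the proof of Corollary~\ref{unicity}: that $T$ splits as $T=\overline{T}\oplus\langle-2\rangle$, and that there exists an associated K3 surface carrying a non-symplectic automorphism of order $p$ with invariant lattice $\overline{T}$ and anti-invariant lattice $S$.

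For the splitting I would invoke the reduction already carried out in the proof of Theorem~\ref{exi}, where one sees that $T=\overline{T}\oplus\langle-2\rangle$ for every admissible triple except $(3,11,1)$ and $(3,8,6)$; in these two cases $T=\langle 6\rangle$ and $T=\langle 6\rangle\oplus E_6^\vee(3)$ admit no orthogonal $\langle-2\rangle$-summand, which is precisely why they are realized by Fano varieties of lines rather than by Hilbert schemes (Examples~\ref{fanononnat1} and~\ref{fanononnat2}). Both belong to the excluded set, so for all remaining triples $\overline{T}$ embeds primitively in $\Lambda$ with orthogonal complement $S\subset\Lambda$.

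It then remains to run the argument of Corollary~\ref{unicity}: given two \ihsk $X,X'$ with automorphisms $\sigma,\sigma'$ of order $p$ and equal $(p,m,a)$, the proof of Theorem~\ref{ball} attaches to them K3 surfaces $\Sigma,\Sigma'$ with the same pair $(\overline{T},S)$, and \cite[Proposition~9.3]{AST} conjugates $\bar\sigma^*$ and $\bar\sigma'^*$ by an element of $O(\Lambda)$, which extends to $O(L)$ by the identity on the $\langle-2\rangle$-summand. When $m\geq 2$ one has $\rank S=(p-1)m\geq p$ and this is literally Corollary~\ref{unicity}. When $m=1$ the eigenperiod domain is $0$-dimensional, but the K3 surfaces still exist by the proof of Theorem~\ref{exi}, and the homomorphisms $O(\overline{T})\to O(A_{\overline{T}})$ and $O(S)\to O(A_S)$ stay surjective—for $p\geq 5$ because $\rank S=p-1\geq\ell(A_S)+2$, and for the single small triple $(3,1,1)$, where $S=A_2(-1)$, by the direct check used in Proposition~\ref{unicityT}—so \cite[Proposition~9.3]{AST} applies unchanged. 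The only genuine failure is $(13,1,0)$: there $\overline{T}=U\oplus E_8$, and by \cite[Theorem~4.3]{Kondo} no K3 surface carries a non-symplectic automorphism of order $13$ with this invariant lattice, so the conjugation cannot even be set up.

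I expect the main obstacle to lie in these last two verifications rather than in any new idea: confirming the lattice splitting $T=\overline{T}\oplus\langle-2\rangle$ from the tables so as to isolate $(3,11,1)$ and $(3,8,6)$, and checking that the uniqueness argument survives when $m=1$ and the moduli space degenerates to a point, so that $(13,1,0)$—excluded because its auxiliary K3 surface does not exist—remains the only case left out.
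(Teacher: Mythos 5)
Your proposal is correct and takes essentially the same route as the paper's own proof: the paper likewise reduces the statement to the splitting $T=\overline{T}\oplus\langle-2\rangle$ (which fails exactly for $(3,11,1)$ and $(3,8,6)$, read off from the tables via Theorem~\ref{exi}), invokes Corollary~\ref{unicity} when $\rank S\geq p$ (i.e. $m\geq 2$), and settles the zero-dimensional cases $m=1$ by the fact that they are realized by natural automorphisms coming from K3 surfaces, with $(13,1,0)$ excluded because the required K3 surface does not exist by Kondo's classification. Your extra verification that the surjectivity of $O(S)\to O(A_S)$ persists for $m=1$, so that \cite[Proposition~9.3]{AST} still applies, merely makes explicit a detail the paper leaves implicit.
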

\begin{proof} 
By assumption, using Theorem~\ref{th:lattice} in each case the lattice $T$ is  of the form $\overline{T}\oplus \langle -2\rangle$ so the assertion follows from Theorem~\ref{unicity} and from the fact that the cases where the moduli space is zero-dimensional are realized by natural automorphisms on K3 surfaces.
\end{proof}

\begin{cor}\label{cor:nonnatural}
There exists \ihsk with a non-natural non-symplectic automorphisms of prime order $p\geq 3$.
\end{cor}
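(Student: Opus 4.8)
The plan is to read the corollary off from the explicit order-three automorphisms on Fano varieties of lines constructed in Section~\ref{s:exampl}, so that the whole content reduces to checking that at least one of them fails the criterion for naturality of Definition~\ref{nat}. The most economical witness is the automorphism $\sigma_1$ of $F(V_1)$ from Example~\ref{fanononnat1}, for which we have already computed $T_{\sigma_1}(F(V_1))=\langle 6\rangle$.

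First I would invoke the two facts already at our disposal. Since the $G$-module structure of $H^2(X,\IZ)$ is a deformation invariant of the pair $(X,\sigma)$, any two deformation equivalent pairs have isometric invariant lattices. On the other hand, Section~\ref{natural} shows that a natural automorphism $\varphi^{[2]}$ arising from a non-symplectic automorphism $\varphi$ of a K3 surface $\Sigma$ satisfies $T_{\varphi^{[2]}}(\Sigma^{[2]})=\langle -2\rangle\oplus T_\varphi(\Sigma)$. The key step is then a short obstruction: were $\sigma_1$ natural, there would exist such a $\Sigma$ and $\varphi$ with $\langle 6\rangle\cong\langle -2\rangle\oplus T_\varphi(\Sigma)$. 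But the invariant lattice $T_\varphi(\Sigma)$ of a non-symplectic automorphism of a K3 surface is hyperbolic, hence of positive rank, so the right-hand side has rank at least two and splits off a vector of square $-2$; whereas $\langle 6\rangle$ has rank one and represents no class of square $-2$. This contradiction shows that $\sigma_1$ is non-natural, which already establishes the corollary for $p=3$.

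Finally I would note that Example~\ref{fanofixed} provides a second, genuinely different non-natural automorphism of order three, this time with the same invariant lattice $T$ and orthogonal complement $S$ as a natural one. Here the obstruction is of a different nature: its fixed locus (three copies of the cubic surface $K$ together with $27$ isolated points) does not match the fixed locus of the natural automorphism with the same $(T,S)$ computed in Section~\ref{natural}, and the topological type of the fixed locus is a deformation invariant because in a smooth family of finite-order automorphisms the fixed loci vary smoothly over the base. I expect the only genuine subtlety, and hence the main obstacle, to lie not in the present corollary---whose difficulty has already been absorbed into the construction of the examples and the lattice computations of Theorem~\ref{th:lattice}---but in making this second comparison well posed, namely in using Corollary~\ref{unicity} and the period description of Theorem~\ref{ball} to guarantee that all natural automorphisms with the prescribed $(T,S)$ form a single deformation class with a common fixed-locus type.
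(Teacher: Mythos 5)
Your first argument is correct and is essentially the paper's own proof: the paper also uses Example~\ref{fanononnat1}, invokes the deformation invariance of $T$, and derives the contradiction from the fact that a natural automorphism has invariant lattice of rank at least two (ample class pulled back from the K3 plus the exceptional divisor class), whereas $T_{\sigma_1}=\langle 6\rangle$ has rank one. Your reformulation via $T_{\varphi^{[2]}}(\Sigma^{[2]})=\langle -2\rangle\oplus T_\varphi(\Sigma)$ with $T_\varphi(\Sigma)$ of positive rank is the same obstruction, and it alone proves the corollary.

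Where you diverge is in the second, supplementary example, and there the difference matters. The paper's second witness is Example~\ref{fanononnat2} (case $(3,8,6)$), for which the obstruction is again purely lattice-theoretic: $S=U^{\oplus 2}\oplus A_2^{\oplus 6}$ is a deformation invariant and cannot arise as $S_{\varphi}(\Sigma)=S_{\varphi^{[2]}}(\Sigma^{[2]})$ for any K3 automorphism. You instead use Example~\ref{fanofixed} (case $(3,5,5)$) and argue via the fixed locus. This requires the additional fact that the topological type of the fixed locus is an invariant of equivariant deformation; your Ehresmann-type justification (the fixed locus of the family is a smooth proper submersion over the base, by linearization and averaging) is correct, but this fact is nowhere established in the paper, and the paper is noticeably careful on exactly this point: in Remark~\ref{rem:fixnotunique} and in the introduction the automorphism of Example~\ref{fanofixed} is only called ``of different kind,'' not non-natural, precisely because its $(T,S)$ coincides with that of a natural automorphism. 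So your route buys a stronger conclusion (non-naturality of Example~\ref{fanofixed}) at the cost of a deformation-theoretic lemma you would need to prove, while the paper's choice of second example keeps the whole proof at the level of deformation-invariant lattice data. Also note that the ``main obstacle'' you flag is slightly misplaced: one does not need all natural automorphisms with the given $(T,S)$ to form a single deformation class, only that each of them has the fixed locus computed in \S\ref{natural}, which follows from the classification of \cite{AST} (the K3 fixed locus is determined by the lattice invariants).
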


\begin{proof}
We provide two examples of \ihsk admitting a non-symplectic automorphism that can not be the deformation of an automorphism on a Hilbert scheme of points induced
by an automorphism of the underlying K3 surface.
\par $(p,m,a)=(3,11,1)$. This is Example \ref{fanononnat1}, here $\rank T=2$.
For every automorphism on some $\Sigma^{[2]}$ induced by an automorphim of the K3 surface $\Sigma$, the rank of the invariant lattice is at least two, since its contains the class of an ample divisor on the K3 surface and the class of the exceptional divisor is also invariant. Since~$T$ is invariant under 
equivariant deformation this automorphism is non-natural.

\par $(p,m,a)=(3,8,6)$. This is Example \ref{fanononnat2}, here $S=U^{\oplus 2}\oplus A_2^{\oplus 6}$. This lattice is invariant under equivariant deformation. Checking all
automorphisms obtained using K3 surfaces one sees that this lattice can not be obtained in this way, so this automorphism is non-natural. 
\end{proof}

\begin{rem}\label{rem:fixnotunique} {\bf(Different fixed loci).}
We have shown that in many cases the action of the automorphism on the lattice $L$ is uniquely determined (see Corollary~\ref{unicity}), but in general the fixed locus is not uniquely determined, for instance in the case $(p,m,a)=(3,5,5)$. As explained in \S~\ref{natural}, starting with a K3 surface $\Sigma$ with a non-symplectic automorphism of order three with fixed locus consisting of $5$ isolated fixed points and two rational curves (see \cite[Table 2]{AST}) we obtain a natural automorphism on the Hilbert scheme $\Sigma^{[2]}$ with fixed locus consisting in $10$ isolated fixed points, $17$~rational curves, one surface isomorphic to $\IP^1\times \IP^1$ and two surfaces isomorphic to $\IP^2$. As explained in Example \ref{fanofixed}, using the Fano variety of lines on a cubic fourfold we construct a non-symplectic automorphism with a  different fixed locus consisting
 this time of $27$ isolated fixed points and three rational cubic surfaces. 
In contrary, Example \ref{fanononnat3} shows a similar situation where the fixed loci are identical.
\end{rem}

%%%%%%%%%%%%%%%%%%%%%%%%%%%%%%%%%%%%%%%%%%%%%%%%%%%%%%%

\section{Non-symplectic Involutions}\label{esempi5}

Beauville in \cite[Proposition 2.2]{BeauvilleInv} shows that the fixed locus of a non-symplectic involution $\sigma$ on an \ihsk is a smooth Lagrangian surface
$F$ (possibly not connected) such that $\chi(\cO_F)=\frac{1}{8}(t^2+7)$ and $e(F)=\frac{1}{2}(t^2+23)$, where $t$ is the trace of $\sigma^*$ on
$H^{1,1}(X)$, and he proves that $t$ can take all odd integer values $-19\leq t\leq 21$. Moreover, he provides examples for all such cases: these are all natural examples except one:  Beauville's non-natural example from \cite{BeauvilleKaehler} for $t=-19$. Nevertheless, the paper contains no information about the invariant
lattice and its orthogonal.

Ohashi and Wandel in \cite{OW} study the case $t=-17$ in detail by classifying all possible conjugacy classes of non-symplectic involutions. In fact, such conjugacy classes are in bijection with the orbits of primitive embeddings of the invariant sublattice $T$ in $L$. Moreover, in their paper they show that all conjugacy classes are indeed realized, at least abstractly, and give a new explicit example for one of the families using moduli spaces of sheaves on K3 surfaces.  

The proof of  \cite[Lemma 5.5]{BNWS}, with little modification, gives the following result for the case $p=2$:

\begin{lemma}\label{2elemdiscgrp}
Let $X$ be an \ihsk and $G$ a 
finite group of order $2$ acting non-symplectically on $X$. Then:
\begin{itemize}
\item $\frac{H^2(X,\IZ)}{S\oplus T}\cong \left( \frac{\IZ}{p\IZ}\right)^{\oplus a_G(X)}$ for some integer
$0\leq a_G(X)=:a$. 
\item $S$ has signature $(2,r-2)$ and $T$ has signature $(1, 22-r)$ where $r=\rank S$; 
\item either $A_T\cong (\IZ/2\IZ)^{\oplus a+1}$ and $A_S\cong (\IZ/2\IZ)^{\oplus a}$ or {\it vice versa}.
\end{itemize}
\end{lemma}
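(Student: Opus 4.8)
The plan is to mimic the proof of the odd-prime case (Proposition~\ref{prop:SandT}, i.e.\ \cite[Lemma~5.5]{BNWS}), the only new feature being that for $p=2$ the discriminant group $\IZ/2\IZ$ of $L$ lives at the same prime as the glueing of the eigenlattices. First I would fix a generator $\sigma$ and set $T:=\ker(\sigma^\ast-\id)$ and $S:=\ker(\sigma^\ast+\id)$ inside $H^2(X,\IZ)=L$; since $\sigma^\ast$ is an involution these are the invariant and anti-invariant sublattices, they are mutually orthogonal and primitive, and $S=T^\perp$. For the signatures, averaging a Kähler class over $G$ produces a $\sigma$-invariant positive class in $H^{1,1}(X)$, hence in $T$, so $T$ acquires exactly one positive direction; on the other hand $\sigma^\ast\omega_X=-\omega_X$ shows that the positive-definite real plane spanned by $\omega_X+\bar\omega_X$ and $i(\omega_X-\bar\omega_X)$ lies in $S\otimes\IR$, so $S$ carries the remaining two positive directions. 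As $L$ has signature $(3,20)$ this forces $S$ of signature $(2,r-2)$ and $T$ of signature $(1,22-r)$ with $r=\rank S$, which is the second bullet. For the first bullet, for every $x\in L$ one has $2x=(x+\sigma^\ast x)+(x-\sigma^\ast x)\in T\oplus S$, so $2L\subseteq T\oplus S$ and the finite group $H:=L/(T\oplus S)$ is annihilated by $2$, giving $H\cong(\IZ/2\IZ)^{\oplus a}$.

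Next I would compute the discriminant groups via Nikulin's theory of overlattices \cite{Nikulinintegral}. The subgroup $H\subseteq A_T\oplus A_S$ is isotropic for $q_T\oplus q_S$, the primitivity of $T$ and of $S$ means the two projections $H\to A_T$ and $H\to A_S$ are injective, and one has $A_L\cong H^\perp/H$ with $H^\perp$ taken in $A_T\oplus A_S$. Since $A_L=\IZ/2\IZ$ this yields $|H^\perp|=2|H|$, and combining with $|A_T|\cdot|A_S|=|A_T\oplus A_S|=|H|\cdot|H^\perp|$ gives $|A_T|\cdot|A_S|=2^{2a+1}$ (equivalently the index formula $[L:T\oplus S]^2=\discr(T)\discr(S)/\discr(L)$). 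It then remains to prove that $A_T$ and $A_S$ are both $2$-elementary: this is exactly where the $p=2$ computation of \cite[Lemma~5.5]{BNWS} is used, the point being that $L$ is itself a $2$-elementary lattice (indeed $q_L=\IZ/2\IZ(3/2)$), so that for $s^\ast\in S^\vee$ the identity $2\langle s^\ast,x\rangle=\langle s^\ast,x-\sigma^\ast x\rangle\in\IZ$ gives $2S^\vee\subseteq L^\vee$, whence one propagates $2$-elementarity from $L$ to $S$ and symmetrically to $T$. Granting this, both discriminant groups are of the form $(\IZ/2\IZ)^\bullet$, and the injectivity of $H\to A_T,A_S$ forces $|A_T|,|A_S|\geq 2^a$; together with $|A_T|\cdot|A_S|=2^{2a+1}$ this leaves only $\{|A_T|,|A_S|\}=\{2^a,2^{a+1}\}$, which is the third bullet.

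The main obstacle is the $2$-elementarity of $A_T$ and $A_S$ together with deciding which of the two carries the extra $\IZ/2\IZ$. For odd $p$ this causes no trouble: $A_S$ is $p$-elementary and carries no $2$-torsion, so the factor $\IZ/2\IZ=A_L$ is forced into $A_T$, as recorded in Proposition~\ref{prop:SandT}. For $p=2$ the discriminant of $L$ sits at the same prime as the glueing group $H$, so a priori the extra $\IZ/2\IZ$ may attach to either $A_T$ or $A_S$; this genuine ambiguity is the reason the statement reads ``or vice versa'', and it is precisely what makes the classification of primitive embeddings of $T$ in $L$ (and hence of the associated moduli spaces) more delicate in the involution case, as taken up later in Proposition~\ref{embedT}.
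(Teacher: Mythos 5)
Your overall architecture is sound and is essentially the one the paper has in mind (the paper itself just invokes the proof of \cite[Lemma 5.5]{BNWS} ``with little modification''): eigenlattice decomposition, signatures via an averaged K\"ahler class and the anti-invariant period, $2L\subseteq T\oplus S$, and Nikulin's overlattice formalism giving $|A_T|\cdot|A_S|=2^{2a+1}$ with both groups containing $(\IZ/2\IZ)^{\oplus a}$. The genuine gap is in the step you yourself identify as the crux: the $2$-elementarity of $A_T$ and $A_S$. The propagation you propose --- $2S^\vee\subseteq L^\vee$ together with $2$-elementarity of $L$, i.e. $2L^\vee\subseteq L$ --- only yields $4S^\vee\subseteq L\cap (S\otimes\IQ)=S$, that is, $A_S$ has exponent dividing $4$, and symmetrically for $A_T$. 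Your final counting cannot repair this: a group of order $2^{a+1}$ containing $(\IZ/2\IZ)^{\oplus a}$ may perfectly well be $\IZ/4\IZ\oplus(\IZ/2\IZ)^{\oplus(a-1)}$, whose $2$-torsion subgroup is exactly $(\IZ/2\IZ)^{\oplus a}$, so exponent $4$ is not excluded by the order count plus the embedding of $H$. Moreover, deferring this point to ``the $p=2$ computation of \cite[Lemma 5.5]{BNWS}'' is circular: that lemma is stated and proved for odd primes, and the $p=2$ adaptation is precisely the content of the statement you are proving.

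What is actually needed is not merely that $L$ is $2$-elementary, but that its discriminant form takes the \emph{non-integral} value $3/2$ on the generator of $A_L$. Concretely: for $s^\ast\in S^\vee$ your identity gives $2s^\ast\in L^\vee$, and the exponent-$4$ bound gives $4\langle s^\ast,s^\ast\rangle\in\IZ$, so $q_L(2s^\ast \bmod L)=4\langle s^\ast,s^\ast\rangle \bmod 2\IZ$ is the class of an integer; since the unique nontrivial element of $A_L$ has $q_L$-value $3/2$, the class of $2s^\ast$ in $A_L$ must vanish, hence $2s^\ast\in L\cap(S\otimes\IQ)=S$. The same argument shows $2T^\vee\subseteq T$, and with this step supplied your counting correctly yields the third bullet, including the ``or vice versa'' ambiguity. (Alternatively, one can extend $\sigma^\ast\oplus\id$ to an involution of the Mukai lattice $U^{\oplus 4}\oplus E_8^{\oplus 2}$, in which $L$ is the orthogonal complement of a vector of square $2$ --- the extension exists because $\sigma^\ast$ necessarily acts trivially on $A_L\cong\IZ/2\IZ$ --- and then use the classical fact that eigenlattices of involutions on \emph{unimodular} lattices are $2$-elementary; this handles $S$ at once, and $T$ after a short glue argument.)
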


Hence both $T$ and $S$ are indefinite $2$-elementary lattices and $T$ is hyperbolic.
 Recall that the isomorphism class of an indefinite $2$-elementary lattice is classified 
by the triple $(r,a,\delta)$, where $r$ is its rank, $a$ is the length of the discriminant 
group and $\delta=0$ if the discriminant form takes values in $\IZ/2\IZ\subset\IQ/2\IZ$ and $1$ otherwise. 
We are interested in counting how many non-isomorphic primitive  embeddings of $T$  in $L$ there are. Proposition~\ref{2-elem} below is the analogous of the Proposition~\ref{discform} in the case of involutions. We formulate it this time for the lattice $T$ instead of $S$ for compatibility with Nikulin's classification \cite{Nikulinfactor} of non-symplectic involutions on K3 surfaces in terms of hyperbolic $2$-elementary lattices.

\begin{prop}\label{2-elem}
Let $T$ be an even hyperbolic $2$-elementary lattice of signature $(1,t)$ and length $\ell(A_T)=a\geq 0$, and  $L=U^{\oplus 3}\oplus E_8^{\oplus
2}\oplus \langle -2\rangle$. Assume that $T$ admits a primitive embedding in $L$. Then:
\begin{itemize}
 \item[(i)] If there is no $x\in A_T$ such that $q_T(x)=3/2 \mod 2\IZ$, then $T$ admits a unique primitive embedding into $L$ whose orthogonal complement is a $2$-elementary
lattice $S$ of signature $(2,20-t)$ length $\ell(A_S)=a+1$ and $\delta_S=1$.
\item[(ii)] Otherwise, non-isomorphic primitive embeddings of $T$ into $L$ are in 1-1 correspondence with
non-isometric choices of a $2$-elementary lattice $S$ of signature $(2,20-t)$ with either $l(A_S)=l(A_T)-1$, or $l(A_S)=l(A_T)+1$ and
$\delta_S=1$.
\end{itemize}
\end{prop}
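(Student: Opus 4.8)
The plan is to apply Nikulin's description of primitive embeddings \cite[Proposition~1.15.1]{Nikulinintegral} exactly as in the proof of Proposition~\ref{discform}, but now reading off the invariants $(\rank S,\ell(A_S),\delta_S)$ of the orthogonal complement and invoking the classification of indefinite $2$-elementary lattices recalled in Remark~\ref{rem:2elem}. A primitive embedding of $T$ in $L$ is encoded by a quintuple $(H_T,H_L,\gamma,S,\gamma_S)$, where $H_T\subseteq A_T$, $H_L\subseteq A_L$, $\gamma\colon H_T\to H_L$ is an isomorphism with $q_L\circ\gamma=q_T$, and $S$ is the orthogonal complement with $q_S=\left.\left((-q_T)\oplus q_L\right)\right|_{\Gamma^\perp/\Gamma}$ for $\Gamma$ the graph of $\gamma$ in $A_T\oplus A_L$. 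Since $A_L=\IZ/2\IZ$ is generated by a single element $g$ with $q_L(g)=3/2$, the only possibilities are $H_L=\{0\}$ or $H_L=A_L$, and in the latter case a generator of $H_T$ must be an element $x\in A_T$ with $q_T(x)=3/2$, forced by $q_L(\gamma(x))=q_T(x)$. This is exactly the dichotomy in the statement.

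First I would treat case (i). If $A_T$ has no element of square $3/2$, only $H_L=\{0\}$ is admissible, so $\Gamma=\{0\}$ and $q_S=(-q_T)\oplus q_L$; hence $A_S\cong(\IZ/2\IZ)^{\oplus(a+1)}$, so $\ell(A_S)=a+1$. Because the $q_L$-summand takes the value $3/2\notin\IZ/2\IZ$ we get $\delta_S=1$, and $\rank S=22-t$ forces the signature $(2,20-t)$. By Remark~\ref{rem:2elem} the triple $(22-t,a+1,1)$ determines $S$ up to isometry; arguing as at the end of Proposition~\ref{discform}, the surjectivity of $O(S)\to O(A_S)$ shows that the remaining freedom in $\gamma_S$ produces isomorphic embeddings, so the embedding is unique.

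Next I would treat case (ii), where $A_T$ does contain an element of square $3/2$ and both values of $H_L$ occur. The choice $H_L=\{0\}$ reproduces the complement of case (i), with $\ell(A_S)=a+1$ and $\delta_S=1$. The choice $H_L=A_L$ gives an isotropic line $\Gamma$ (isotropy follows from $-q_T(x)+q_L(g)=0$), whence $\lvert A_S\rvert=\lvert A_T\oplus A_L\rvert/\lvert\Gamma\rvert^2=2^{a-1}$ and $\ell(A_S)=a-1$; here $\delta_S\in\{0,1\}$ is read off from the restriction of $(-q_T)\oplus q_L$ to $\Gamma^\perp/\Gamma$ and depends on the chosen isotropic line. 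I would then show that the assignment sending an embedding to its orthogonal complement $S=T^\perp$ descends to a bijection onto the set of $2$-elementary lattices of signature $(2,20-t)$ having $\ell(A_S)=a-1$, together with the single lattice having $\ell(A_S)=a+1$ and $\delta_S=1$. Surjectivity amounts to checking that each such $S$ is realised by an admissible $\Gamma$, i.e. that its discriminant form occurs among the restrictions above; this is the $\delta_S$-bookkeeping, simplified by the fact that a $2$-elementary lattice of fixed signature is pinned down by $(\ell,\delta)$.

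The main obstacle is the injectivity of this assignment, namely that two embeddings with isometric complements are isomorphic. For odd primes this followed in Proposition~\ref{discform} from the surjectivity of the natural map onto the automorphisms of the discriminant group under a rank hypothesis; for $2$-elementary lattices that surjectivity is genuinely more delicate, and is precisely the extra subtlety flagged in Remark~\ref{rem:2elem}. I would therefore verify it in the indefinite range at hand via Nikulin's criterion \cite[Theorem~1.14.2]{Nikulinintegral}, so that the residual choice of $\gamma_S$ is absorbed by isometries of $S$ and yields no new embedding class; the remaining comparison of discriminant forms is then a finite check.
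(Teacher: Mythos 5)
Your proposal follows essentially the same route as the paper's proof: Nikulin's \cite[Proposition~1.15.1]{Nikulinintegral} reduces everything to the two choices $H_L=\{0\}$ or $H_L=A_L$, the second being available exactly when $A_T$ contains an element of square $3/2$, and you compute the invariants of $S=T^\perp$ exactly as the paper does ($\ell(A_S)=a+1$, $\delta_S=1$ from the $q_L$-summand in the first case; an isotropic graph $\Gamma$ and $\ell(A_S)=a-1$ in the second). The one substantive divergence is the step you flag as the main obstacle, the surjectivity of $O(S)\to O(q_S)$ needed to absorb the choice of $\gamma_S$. You propose to get it from the general criterion \cite[Theorem~1.14.2]{Nikulinintegral}, restricted to ``the indefinite range at hand''; the paper instead invokes \cite[Theorem~3.6.3]{Nikulinintegral}, which gives this surjectivity for \emph{every} even $2$-elementary lattice, with no indefiniteness or rank hypothesis. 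This difference is not cosmetic: within the scope of the proposition there are embeddings where your tool does not apply. For $\rank T=21$ (which occurs, and is used in Proposition~\ref{embedT}) the complement $S$ has signature $(2,0)$, hence is definite, so Theorem~1.14.2 is simply unavailable; and for lattices $T$ with $a=\rank T$ (e.g.\ the points $(11,11)$ and $(12,12)$ in Figures~\ref{app:triangle1} and~\ref{app:triangle2}) one gets $\rank S=\ell(A_S)$, so the simple form of the criterion ($\rank S\geq\ell(A_S)+2$) fails and you would need its refined version with conditions on splitting off specific forms at $2$. So to close your argument you should replace Theorem~1.14.2 by the $2$-elementary-specific Theorem~3.6.3, which disposes of all cases, including the definite ones, at once. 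Apart from this, your explicit attention to surjectivity of the assignment onto abstract isometry classes of $S$ (which the paper leaves implicit in the phrase ``classified by $S$'') is a reasonable refinement rather than a deviation.
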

\begin{proof}
We proceed as in the proof of Proposition \ref{discform}. 
By \cite[Proposition 1.15.1]{Nikulinintegral} a primitive embedding
of $T$ into $L$ is equivalent to the data of a quintuple $(H_T,H_L,\gamma,S,\gamma_S)$ satisfying the following conditions:
\begin{itemize}
\item $H_T$ is a subgroup of $A_T=(\IZ/2\IZ)^{\oplus a}$, $H_L$ is a subgroup of $A_L=\IZ/2\IZ$ and $\gamma\colon H_T\to H_L$ is an isomorphism of groups such that for any $x\in H_T$, $q_L(\gamma(x))=q_T(x)$.
\item $S$ is a lattice of invariants $(2,20-t,q_S)$ with $q_S=\left.\left((-q_T)\oplus q_L\right)\right|_{\Gamma^\perp/\Gamma}$,
where $\Gamma$ is the graph of $\gamma$ in $A_T\oplus A_L$, $\Gamma^{\perp}$ is the orthogonal complement of $\Gamma$ in
$A_T\oplus A_L$ with respect to the bilinear form induced on $A_T\oplus A_L$ and with values in $\IQ/\IZ$ and $\gamma_S$ is an automorphism of $A_S$ that preserves $q_S$. Moreover $S$ is the orthogonal complement of $T$ in $L$. 
\end{itemize}
In our case there are only two possibilities:
\begin{enumerate}
 \item $H_T=H_L=\{0\}$ and $\gamma=\id$. In this case $\Gamma=\{(0,0)\}$, so
the discriminant group of the orthogonal complement is $A_T\oplus A_L$ and $q_S=-q_T\oplus q_L$. Recall that $A_L=\frac{\IZ}{2\IZ}\left(\frac{3}{2}\right)$ so let $y\in A_L$ be such that $q_L(y)=3/2$; then
$q_S((0,y))=q_L(y)\notin\IZ/2\IZ$ and hence $\delta_S=1$.
\item $H_T=H_L=\IZ/2\IZ$ and $\gamma=\id$. This case can happen only if there is $x\in A_T$ such that $q_T(x)=3/2$. In this case
$\Gamma\cong \IZ/2\IZ$, so 
the discriminant group of $S$ is $A_S=\Gamma^{\perp}/\Gamma\cong (\IZ/2\IZ)^{\oplus a-1} $ and $q_S=(-q_T\oplus q_L)_{|A_S}$.
\end{enumerate}
In each case the lattice $S$ is $2$-elementary so the natural map $O(S)\rightarrow O(q_S)$ is surjective by \cite[Theorem 3.6.3]{Nikulinintegral}. This implies that different choices of the isometry~$\gamma_S$ produce isomorphic embeddings of $T$ in $L$. As a consequence, if there is no $x\in A_T$ such that $q_T(x)=3/2\mod 2\IZ$ only the first case occurs and the lattice $S$ has signature $(2,20-t)$, $\ell(A_S)=a+1$ and $\delta_S=1$. As noted in Remark~\ref{rem:2elem}, the lattice~$S$ is uniquely determined by these invariants so $T$ admits a unique
embedding in~$L$. Otherwise, both cases can occur and the primitive embeddings of $T$ in $L$ are classified by $S$: the signature of $S$ is $(2,20-t)$, in the first case $S$ has length $a+1$ and $\delta_S=1$, in the second case 
its length is $a-1$.
\end{proof}

\begin{rem} As a consequence, the lattice $T$ admits at most three non isometric embeddings in $L$. In particular:
\begin{enumerate}
\item If $\delta_T=0$ then case (i) occurs and $T$ admits a unique primitive embedding in $L$. By \cite[Theorem~4.3.2]{Nikulinfactor} this implies that $1-t\equiv 0\mod 4$ so $\rank T\equiv 2\mod 4$.
\item By \cite[Theorem~1.5.2]{dolgabourbaki} if $\delta_S=0$ then $t-18\equiv 0\mod 4$ so $\rank S\equiv 0\mod 4$ and $\rank T\equiv 3\mod 4$.
Hence if $\rank T\not\equiv 3\mod 4$ one has necessarily $\delta_S=1$ hence $T$ admits at most two embeddings in $L$.
\end{enumerate}
\end{rem}

In Figures \ref{app:triangle1},\ref{app:triangle2}  we give all possible values of 
$(r,a,\delta)$ such that a $2$-elementary lattice $T$ with these invariants
admits a primitive embedding in $L$. In order to show all possible embeddings, 
in Figure \ref{app:triangle1} we give the cases where the orthogonal~$S$ has the property $\ell(A_S)=\ell(A_T)+1$ and in Figure \ref{app:triangle2} 
we give the cases where $\ell(A_S)=\ell(A_T)-1$. These figures are obtained by using the results of Nikulin on primivite embeddings (see Remark~\ref{rem:2elem}).

\begin{rem}\label{nat-inv}
Let $\Sigma$ be a K3 surface with a non-symplectic involution $\iota$ such that the invariant 
lattice $T_{\iota}(\Sigma)$ has invariants $(r,a,\delta)$. Then the natural involution 
$\iota^{[2]}$ induced by $\iota$ on $\Sigma^{[2]}$ gives an example of the case where the 
invariant sublattice~$T$ has invariants $(r+1,a+1,1)$, its orthogonal complement $S$ 
satisfies $\ell(A_S)=a=\ell(A_T)-1$ and $\delta_S=\delta$. This gives a realization of all the cases 
illustrated in Figure~\ref{app:triangle2}. 
\end{rem}

\begin{prop}\label{embedT}
Under the same assumptions as in Proposition \ref{2-elem}, for each embedding $j:T\hookrightarrow L$ 
there exists an \ihsk with a non-symplectic involution $\sigma\colon X\to X$ such 
that the invariant lattice $T_\sigma(X)\subset H^2(X,\IZ)$ is isomorphic to the embedding $j(T)\subset L$.
\end{prop}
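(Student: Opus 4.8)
The plan is to realize each primitive embedding $j\colon T\hookrightarrow L$ by prescribing the isometry of $L$ induced by the involution and then producing the manifold via the surjectivity of the period map together with Markman's Torelli theorem. Fix an embedding $j$ and let $S=j(T)^\perp$ be its orthogonal complement in $L$, a $2$-elementary lattice of signature $(2,20-t)$ as in Proposition~\ref{2-elem}. I would define $\rho(\sigma)\in O(L)$ to be $\id$ on $j(T)$ and $-\id$ on $S$. Since $A_L=\IZ/2\IZ$ is $2$-torsion, $-\id$ acts trivially on $A_S$, so the gluing conditions are automatically satisfied and $\rho(\sigma)$ extends to a well-defined isometry of $L$; it is an involution whose invariant sublattice is exactly $j(T)$.

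First I would check that $\rho(\sigma)$ lies in the monodromy group of $K3^{[2]}$-type manifolds. Its action on $A_L$ is trivial, hence a fortiori $\pm\id$, and on the maximal positive-definite subspace of $L\otimes\IR$ (of signature $(3,0)$) it is $+1$ on the single positive direction coming from $T$ and $-1$ on the two positive directions coming from $S$, so its determinant there is $(+1)(-1)^2=+1$ and it preserves the orientation. By Markman's description of $\mathrm{Mon}^2$ for manifolds deformation equivalent to $K3^{[2]}$, such an orientation-preserving isometry acting by $\pm\id$ on $A_L$ is a monodromy operator.

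Next I would choose a very general period $\omega\in\Omega_T^{\rho,-1}=\{x\in\IP(S\otimes\IC)\mid q(x)=0,\ q(x+\bar x)>0\}$, which is non-empty because $S$ has signature $(2,20-t)$. Here \emph{general} is meant as in Proposition~\ref{prop:dense}: $\omega$ avoids the countable union of hyperplanes $\delta^\perp$, $\delta\in S$, so that the marked pair $(X,\eta)$ obtained from the surjectivity of the period map onto a component of $\frM_L$ satisfies $\NS(X)=\eta(j(T))$. This lattice is hyperbolic, hence $X$ is projective and is an \ihskpt Putting $g\coloneqq\eta\circ\rho(\sigma)\circ\eta^{-1}$, the relation $\rho(\sigma)(\omega)=-\omega$ shows that $g$ preserves the line $\eta(\omega)=H^{2,0}(X)$, so $g$ is a Hodge isometry, and it is a parallel transport operator by the previous paragraph.

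The main point, and the step I expect to be the real obstacle, is to upgrade the Hodge isometry $g$ to an honest biholomorphic involution: the Global Torelli Theorem as recalled above only yields inseparable, hence bimeromorphic, points, so one must control the K\"ahler cone. Here I would use that $g$ acts as the identity on $\NS(X)=\eta(j(T))$, so it fixes every ample class and therefore preserves $\mathfrak{K}_X$; by the Torelli theorem in Markman's refined form, a parallel transport Hodge isometry preserving the K\"ahler cone is induced by an automorphism $\sigma$ of $X$ with $\sigma^*=g$. Since automorphisms act faithfully on $H^2$ and $g^2=\id$, the automorphism $\sigma$ is an involution, and it is non-symplectic because $\sigma^*\omega_X=-\omega_X$; by construction $T_\sigma(X)=\eta(j(T))$ realizes the prescribed embedding. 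Finally, the embeddings with $\ell(A_S)=\ell(A_T)-1$ can alternatively be produced from natural involutions $\iota^{[2]}$ through Remark~\ref{nat-inv}, equivalently from Theorem~\ref{ball} applied to $T=\overline T\oplus\langle-2\rangle$, which gives an independent verification for that half of the cases.
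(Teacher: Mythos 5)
Your proposal follows the paper's proof almost step for step: the same isometry $\id_{j(T)}\oplus(-\id_S)$ extended to $L$ via triviality of its action on the discriminant, the same choice of a general period in the anti-invariant part, and the same use of Markman's Torelli theorem (orientation-preservation on a positive three-space plus the action on $A_L$ to get a parallel transport operator, then the invariant ample class to see that the K\"ahler cone is preserved). Two remarks. First, a small slip: the extension of $i$ to $L$ works because $A_S$ is $2$-torsion ($S$ is $2$-elementary), not because $A_L$ is; the conclusion is unaffected. Second, and more substantively: your ``very general period'' step rests on Proposition~\ref{prop:dense}, which is stated only for $\rank M\leq 20$. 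When $\rank T=21$ (invariants $(21,3,1)$, so $S$ is positive definite of rank $2$), the period domain consists of two isolated points and the Baire-type density argument does not apply as written; this is exactly why the paper treats that case separately, realizing it by the natural involution $\iota^{[2]}$ coming from a K3 surface whose invariant lattice has invariants $(20,2,1)$. Your closing sentence --- that embeddings with $\ell(A_S)=\ell(A_T)-1$ can be produced from natural involutions via Remark~\ref{nat-inv} --- supplies precisely the missing case, since rank $21$ occurs only in that family; but you present it as an optional cross-check rather than as the necessary argument. To make your proof complete you should either invoke it there, or verify directly that the two isolated periods lie on no hyperplane $\delta^{\perp}$ with $\delta\in S\setminus\{0\}$, which holds because $S$ is positive definite (a real $\delta$ orthogonal to both the real and imaginary parts of $\omega$ is orthogonal to all of $S$, hence zero).
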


\begin{proof}
The proof follows the same lines as those of \cite[Lemma 2.6]{OW} with one exception. 
The isometry $i=\id_T\oplus (-\id_S)$ of $T\oplus S$ induces 
the identity on $A_{S\oplus T}$, so it leaves stable 
the subgroup $\frac{L}{T\oplus S}\subset A_{T\oplus S}$. 
This implies that $i$ extends to an isometry of $L$ such that $L^i=T$ 
(see \cite[Corollary~1.5.2]{Nikulinintegral}).
Assume first that $\rank T\leq 20$. By the surjectivity of the period map $P_0$ and Proposition~\ref{prop:dense},
for any $\omega\in\Omega_T^\circ$ there exists an \ihsk
with a marking $\eta\colon L\to H^2(X,\IZ)$ such that $\eta(T)=\NS(X)$ and $\eta(\omega)=H^{2,0}(X)$.
Then $\NS(X)$ is hyperbolic, so $X$ is projective by \cite[Theorem~3.11]{Huybrechts}. The action of $i$
on $H^2(X,\IZ)$ induced by $\eta$, that we still denote by $i$, is a Hodge isometry since $i(\omega)=-\omega$ implies $i(H^{2,0}(X))=H^{2,0}(X)$. We apply the Strong Torelli Theorem as stated in \cite[Theorem~1.3]{Markmantorelli}. We verify the conditions of \cite[Theorem~9.8]{Markmantorelli} to prove that the involution $i$ is a parallel transport operator. The positive vectors of~$T$ and~$S$ generate a positive three dimensional subspace of~$L_\IR$ 
whose orientation is preserved by $i$. Moreover since the lattice $L$ admits a unique primitive embedding in the Mukai lattice $U^{\oplus 4}\oplus E_8^{\oplus 2}$, we conclude that the involution $i$ is a parallel 
transport operator. Since $X$ is projective and $i$ acts trivially on $\NS(X)$, it leaves invariant an ample class, so $i$ maps the K\"ahler cone of $X$ to itself. By the Strong Torelli Theorem it follows that there exists an automorphism $\iota$ of $X$ such that $\iota^\ast=i$. Since the natural 
map $\Aut(X)\to O(H^2(X,\IZ))$ is injective, $\iota$ is an involution.

Assume now that $\rank T=21$: the previous argument does not work since the period domain is zero-dimensional but we observe that by \cite{Nikulinfactor} there exists a  K3 surface $\Sigma$ with an involution $\iota$ such that the invariant lattice $T_{\iota}(\Sigma)$ has invariants $(20,2,1)$. Then by Remark~\ref{nat-inv} the involution $\iota^{[2]}$ on $\Sigma^{[2]}$ is such that $(\iota^{[2]})^\ast=i$ and realizes the case where the invariant sublattice $T$ has invariants $(21,3,1)$.
\end{proof}

\begin{example}\text{}
\begin{enumerate}
 \item Take $T=\langle 2 \rangle$, of invariants $(1,1,1)$. The unique embedding in $L$ (see Figure~\ref{app:triangle1}) corresponds to Beauville's
non-natural involution \cite{BeauvilleKaehler} on the Hilbert scheme of two points of a quartic in $\IP^4$ containing no line, here $S=U^{\oplus 2}\oplus E_8^{\oplus 2}\oplus \langle -2\rangle^{\oplus 2}$.
\item Take $T=\langle 2\rangle\oplus \langle -2\rangle$ of invariants $(2,2,1)$. The embedding in Figure~\ref{app:triangle1}
has orthogonal $S=U^{\oplus 2}\oplus E_8\oplus E_7\oplus \langle -2\rangle^{\oplus 2}$, it corresponds to
Ohashi--Wandel's involution \cite{OW}. The embedding in Figure~\ref{app:triangle2} has orthogonal
$S=U^{\oplus 2}\oplus E_8^{\oplus 2}\oplus \langle -2\rangle$ and is realized by a natural involution
on the Hilbert scheme of two points on a K3 surface (see Remark~\ref{nat-inv}).
\end{enumerate}
\end{example}

\clearpage

\appendix

\section{Tables for the invariant lattice and its orthogonal}
\label{app:tables}

\begin{table}[!ht]
\begin{tabular}{r|c|c|c|c|c|c}
$p$&$m$&$a$&$\chi$&$h^*$&$S$&$T$\\
\hline
$\diamondsuit$ 3&11&1&27&67&$U^{\oplus 2}\oplus E_8^{\oplus 2}\oplus A_2$&$\langle 6\rangle$\\

\hline
$\clubsuit$ 3&10&0&9&109&$U^{\oplus 2}\oplus E_8^{\oplus 2}$&$U\oplus \langle -2\rangle$\\

$\clubsuit$ 3&10&2&9&57&$U\oplus U(3)\oplus E_8^{\oplus 2}$&$U(3)\oplus\langle -2\rangle$\\
\hline

$\clubsuit$ 3&9&1&0&96&$U^{\oplus 2}\oplus E_6\oplus E_8$&$U\oplus A_2\oplus \langle -2\rangle$\\

$\clubsuit$ 3&9&3&0&48&$U\oplus U(3)\oplus E_6\oplus E_8$&$U(3)\oplus A_2\oplus \langle -2\rangle$\\

\hline

$\clubsuit$ 3&8&2&0&84&$U^{\oplus 2}\oplus E_6\oplus E_6$&$U\oplus A_2^{\oplus 2}\oplus \langle -2\rangle$\\

$\clubsuit$ 3&8&4&0&40&$U\oplus U(3)\oplus E_6^{\oplus 2}$&$U(3)\oplus A_2^{\oplus 2}\oplus \langle -2\rangle$\\

$\diamondsuit$ 3&8&6&0&12&$U^{\oplus 2}\oplus A_2^6$&$\langle 6\rangle\oplus E_6^\vee(3)$\\
\hline

$\clubsuit$ 3&7&1&9&129&$ U^{\oplus 2}\oplus A_2\oplus E_8 $&$U\oplus E_6\oplus \langle -2\rangle$\\

$\clubsuit$ 3&7&3&9&73& $U\oplus U(3)\oplus A_2\oplus E_8$&$U\oplus A_2^{\oplus 3}\oplus \langle -2\rangle$\\

$\clubsuit$ 3&7&5&9&33&$U^{\oplus 2}\oplus A_2^5$&$U(3)\oplus A_2^3\oplus \langle -2\rangle$\\

$\diamondsuit$, $\clubsuit$ 3&7&7&9&9&$U\oplus U(3)\oplus A_2^5$&$U(3)\oplus E_6^\vee(3)\oplus \langle -2\rangle$\\
\hline
$\clubsuit$ 3&6&0&27&183&$U^{\oplus 2}\oplus E_8$&$U\oplus E_8\oplus \langle -2\rangle$\\

$\clubsuit$ 3&6&2&27&115&$U\oplus U(3)\oplus E_8$&$U\oplus E_6\oplus A_2\oplus \langle -2\rangle$\\

$\clubsuit$ 3&6&4&27&63&$U^{\oplus 2}\oplus A_2^4$&$U\oplus A_2^4\oplus \langle -2\rangle$\\

$\clubsuit$ 3&6&6&27&27&$U\oplus U(3)\oplus A_2^4$&$U(3)\oplus A_2^4\oplus \langle -2\rangle$\\
\hline

$\clubsuit$ 3&5&1&54&166&$U^{\oplus 2}\oplus E_6$&$U\oplus E_8\oplus A_2\oplus \langle -2\rangle$\\

$\clubsuit$ 3&5&3&54&102&$U\oplus U(3)\oplus E_6$&$U\oplus A_2^2\oplus E_6\oplus \langle -2\rangle$\\

$\diamondsuit$, \,$\clubsuit$ 3&5&5&54&54&$U\oplus U(3)\oplus A_2^3$&$U\oplus A_2^5\oplus \langle -2\rangle$\\
\hline

$\clubsuit$ 3&4&2&90&150&$U^{\oplus 2}\oplus A_2^2$&$U\oplus E_6^2\oplus \langle -2\rangle$\\

$\clubsuit$ 3&4&4&90&90&$U\oplus U(3)\oplus A_2^2$&$U\oplus E_6\oplus A_2^3\oplus \langle -2\rangle$\\

\hline

$\clubsuit$ 3&3&1&135&207&$U^{\oplus 2}\oplus A_2$&$U\oplus E_6\oplus E_8\oplus \langle -2\rangle$\\

$\clubsuit$ 3&3&3&135&135&$U\oplus U(3)\oplus A_2$&$U\oplus E_6^{\oplus 2}\oplus A_2 \oplus \langle -2\rangle$\\
\hline
$\clubsuit$ 3&2&0&189&273&$U^{\oplus 2}$&$U\oplus E_8^{\oplus 2}\oplus \langle -2\rangle$\\

$\clubsuit$ 3&2&2&189&189&$U\oplus U(3)$&$U\oplus E_6\oplus E_8\oplus A_2\oplus \langle -2\rangle$\\
\hline

$\clubsuit$ 3&1&1&252&252&$A_2(-1)$&$U\oplus E_8^{\oplus 2}\oplus A_2\oplus \langle -2\rangle$\\
\end{tabular}
\caption{Order 3}\label{ord3}
\end{table}
\bigskip
\begin{table}[!ht]
\begin{tabular}{c|c|c|c|c|c}
$m$&$a$&$\chi(X^\sigma)$&$h^*(X^G,\IF_p)$&$S_G(X)$&$T_G(X)$\\
\hline

 5&1&-1&31&$U\oplus E_8^{\oplus 2}\oplus H_5$&$H_5 \oplus \langle -2\rangle$\\
\hline
 4&2&14&42&$U\oplus H_5\oplus E_8\oplus A_4$&$ H_5\oplus A_4\oplus \langle -2\rangle$\\

 4&4&14&14&$U(5)\oplus H_5\oplus E_8\oplus A_4$&$H_5\oplus A_4^*(5)\oplus \langle -2\rangle$\\
\hline
3&1&54&102&$U\oplus H_5\oplus E_8$&$ H_5\oplus E_8\oplus \langle -2\rangle$\\

 3&3&54&54&$U\oplus H_5\oplus A_4^2$&$ H_5\oplus A_4^2\oplus \langle -2\rangle$\\
\hline
 2&2&119&119&$U\oplus H_5\oplus A_4$&$H_5\oplus A_4\oplus E_8\oplus \langle -2\rangle$\\
\hline
 1&1&202&202&$U\oplus H_5$&$ H_5\oplus E_8^2\oplus \langle -2\rangle$\\
\end{tabular}
\caption{Order 5}\label{ord5}
\end{table}
\bigskip

\begin{table}
\begin{tabular}{c|c|c|c|c|c|c}
$p$&$m$&$a$&$\chi$&$h^*$&$S$&$T$\\
\hline

$\clubsuit$ 7&3&1&9&33&$U^{\oplus 2}\oplus E_8\oplus A_6$&$ U\oplus K_7\oplus \langle -2\rangle$\\

$\clubsuit$ 7&3&3&9&9&$U\oplus U(7)\oplus E_8\oplus A_6$&$ U(7)\oplus K_7\oplus \langle -2\rangle$\\
\hline
$\clubsuit$ 7&2&0&65&117&$U^{\oplus 2}\oplus E_8$&$U\oplus E_8\oplus \langle -2\rangle$\\

$\clubsuit$ 7&2&2&65&65&$U\oplus U(7)\oplus E_8$&$U(7)\oplus E_8\oplus \langle -2\rangle$\\
\hline

$\clubsuit$ 7&1&1&170&170&$U^{\oplus 2}\oplus K_7$&$ U\oplus E_8\oplus A_6\oplus \langle -2\rangle$\\
\end{tabular}
\caption{Order 7}\label{ord7}
\end{table}

\bigskip
 
\begin{table}
\begin{tabular}{r|c|c|c|c|c|c}
$p$&$m$&$a$&$\chi$&$h^*$&$S$&$T$\\
\hline
$\clubsuit$ 11&2&0&5&25&$U^{\oplus 2}\oplus E_8^{\oplus 2}$&$U\oplus \langle -2\rangle$\\

$\clubsuit$ 11&2&2&5&5&$U\oplus U(11)\oplus E_8^{\oplus 2}$&$U(11)\oplus \langle -2\rangle$\\
\hline

$\clubsuit$ 11&1&1&104&104&$K_{11}(-1)\oplus E_8$&$ U\oplus A_{10}\oplus \langle -2\rangle$\\
\end{tabular}
\caption{Order 11}\label{ord11}
\end{table}

\bigskip

\begin{table}
\begin{tabular}{r|c|c|c|c|c|c}
$p$&$m$&$a$&$\chi$&$h^*$&$S$&$T$\\
\hline
13&1&0&77&103&$U^{\oplus 2}\oplus E_8$&$ U\oplus E_8\oplus \langle -2\rangle$\\
$\clubsuit$ 13&1&1&77&77&$U\oplus E_8\oplus H_{13}$&$E_8\oplus H_{13}\oplus \langle -2\rangle$\\
\end{tabular}
\caption{Order 13}\label{ord13}
 \end{table}

\bigskip

\begin{table}
\begin{tabular}{r|c|c|c|c|c|c}
$p$&$m$&$a$&$\chi$&$h^*$&$S$&$T$\\
\hline
$\clubsuit$ 17&1&1&35&35&$U^{\oplus 2}\oplus E_8\oplus L_{17}$&$U\oplus L_{17}\oplus \langle -2\rangle$\\
\end{tabular}
\caption{Order 17}\label{ord17}
\end{table}

\bigskip

\begin{table}
\begin{tabular}{r|c|c|c|c|c|c}
$p$&$m$&$a$&$\chi$&$h^*$&$S$&$T$\\
\hline
$\clubsuit$ 19&1&1&20&20&$K_{19}(-1)\oplus E_8^{\oplus 2}$&$U\oplus K_{19}\oplus \langle -2\rangle$\\
\end{tabular}
\caption{Order 19}\label{ord19}
\end{table}

\begin{figure}[!ht]
\hfill $\begin{array}{l} \sbt \quad \delta_T=\delta_S=1\\ 
\ast \quad \delta_T=0,\, \delta_S=1\end{array}$
\vspace{-1.1cm}

\begin{tikzpicture}[scale=.43]
\filldraw [black] 
(1,1) circle (1.5pt)  %node[below=-0.55cm]{10}
(2,0) node[below=-0.20cm]{*} 
(2,2) circle (1.5pt) node[below=-0.15cm]{*}%node[below=-0.5cm]{9} 
 (3,1) circle (1.5pt)
 (3,3) circle (1.5pt)%node[below=-0.5cm]{8}
 (4,2) circle (1.5pt)
(4,4) circle (1.5pt)%node[below=-0.5cm]{7}
(5,3) circle (1.5pt)
(5,5) circle (1.5pt)%node[below=-0.5cm]{6}
(6,4) circle (1.5pt)node[below=-0.15cm]{*}
(6,2)    node[below=-0.23cm]{*}
(6,6) circle (1.5pt)%node[below=-0.5cm]{5}
(7,3) circle (1.5pt)
(7,5) circle (1.5pt)
(7,7) circle (1.5pt)%node[below=-0.5cm]{4}
(8,2) circle (1.5pt)
(8,4) circle (1.5pt)
(8,6) circle (1.5pt)
(8,8) circle (1.5pt)%node[below=-0.5cm]{3}
(9,1) circle (1.5pt)
(9,3) circle (1.5pt)
(9,5) circle (1.5pt)
(9,7) circle (1.5pt)
(9,9) circle (1.5pt)%node[below=-0.5cm]{2}
(10,0)  node[below=-0.20cm]{*}
(10,2) circle (1.5pt)node[below=-0.15cm]{*}
(10,4) circle (1.5pt)node[below=-0.15cm]{*}
(10,6) circle (1.5pt)node[below=-0.15cm]{*}
(10,8) circle (1.5pt)node[below=-0.15cm]{*}
(10,10) circle (1.5pt)node[below=-0.15cm]{*}
(11,1) circle (1.5pt)
(11,3) circle (1.5pt)
(11,5) circle (1.5pt)
(11,7) circle (1.5pt)
(11,9) circle (1.5pt)
(11,11) circle (1.5pt) %node[below=-0.5cm]{0}
(12,2) circle (1.5pt)
(12,4) circle (1.5pt)
(12,6) circle (1.5pt)
(12,8) circle (1.5pt)
(12,10) circle (1.5pt) %node[below=-0.5cm]{1}
(13,3) circle (1.5pt)
(13,5) circle (1.5pt)
(13,7) circle (1.5pt)
(13,9) circle (1.5pt) %node[below=-0.5cm]{2}
(14,2)  node[below=-0.15cm]{*}
(14,4) circle (1.5pt)node[below=-0.15cm]{*}
(14,6) circle (1.5pt)node[below=-0.15cm]{*}
(14,8) circle (1.5pt) %node[below=-0.5cm]{3}
(15,3) circle (1.5pt)
(15,5) circle (1.5pt)
(15,7) circle (1.5pt)% node[below=-0.5cm]{4}
(16,2) circle (1.5pt)
(16,4) circle (1.5pt)
(16,6) circle (1.5pt) %node[below=-0.5cm]{5}
(17,1) circle (1.5pt)
(17,3) circle (1.5pt)
(17,5) circle (1.5pt)%node[below=-0.5cm]{6}
(18,0)  node[below=-0.2cm]{*}
(18,2) circle (1.5pt)node[below=-0.15cm]{*}
(18,4) circle (1.5pt) node[below=-0.15cm]{*}
(19,1) circle (1.5pt)
(19,3) circle (1.5pt) %node[below=-0.5cm]{8}
(20,2) circle (1.5pt) %node[below=-0.5cm]{9}
 ; 
\draw[->] (0,0) -- coordinate (x axis mid) (22,0);
    \draw[->] (0,0) -- coordinate (y axis mid)(0,13);
    \foreach \x in {0,1,2,3,4,5,6,7,8,9,10,11,12,13,14,15,16,17,18,19,20,21}
        \draw [xshift=0cm](\x cm,0pt) -- (\x cm,-3pt)
         node[anchor=north] {$\x$};
          \foreach \y in {1,2,3,4,5,6,7,8,9,10,11,12}
        \draw (1pt,\y cm) -- (-3pt,\y cm) node[anchor=east] {$\y$};
    \node[below=0.2cm, right=4.5cm] at (x axis mid) {$r$};
   \node[left=0.2cm, below=-2.9cm] at (y axis mid) {$a$};%, rotate=90
 %\draw[<-, blue](0.1,0.1)-- node[below=2cm,left=2cm]{$g$} (11,11);   
 %\draw[<-, red](21.5,0.5)-- node[below=2cm,right=2.1cm]{$k$} (11,11);
  \end{tikzpicture} 
\caption{Order 2 - The lattice $T$ admits an embedding in $L$ with orthogonal $S$ such that $\ell(A_S)=\ell(A_T)+1$.} \label{app:triangle1}
\end{figure}
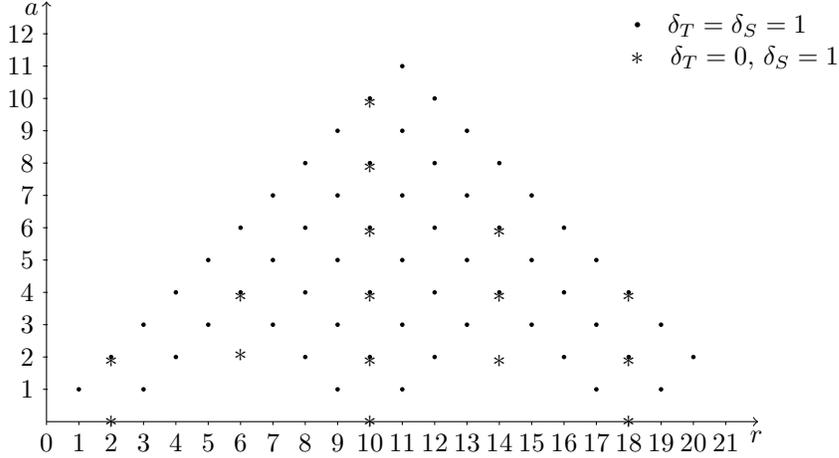

 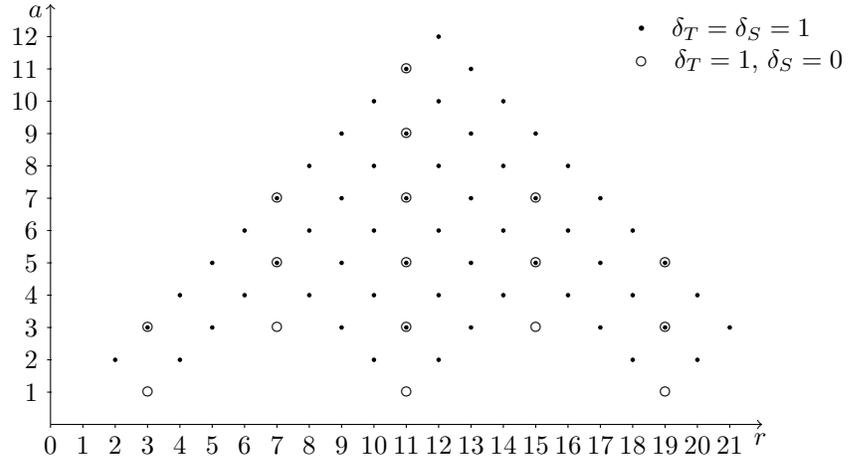
\begin{figure}[!ht]
\hfill $\begin{array}{l} \sbt \quad \delta_T=\delta_S=1\\ 
\circ \quad \delta_T=1,\, \delta_S=0\end{array}$ 
\vspace{-1.1cm}

\begin{tikzpicture}[scale=.43]
\filldraw [black] 
%(1,1) circle (1.5pt)  %node[below=-0.55cm]{10}
%(2,0) node[below=-0.20cm]{*} 
(2,2) circle (1.5pt) %node[below=-0.15cm]{*}%node[below=-0.5cm]{9} 
 (3,1) node{$\circ$} 
 (3,3) circle (1.5pt)node{$\circ$} %node[below=-0.5cm]{8}
 (4,2) circle (1.5pt)
(4,4) circle (1.5pt)%node[below=-0.5cm]{7}
(5,3) circle (1.5pt)
(5,5) circle (1.5pt)
(6,4) circle (1.5pt)%node[below=-0.15cm]{*}
% (6,2)    node[below=-0.23cm]{*}
(6,6) circle (1.5pt)%node[below=-0.5cm]{5}
(7,3) node{$\circ$} 
(7,5) circle (1.5pt)node{$\circ$} 
(7,7) circle (1.5pt)node{$\circ$} %node[below=-0.5cm]{4}
% (8,2) circle (1.5pt)
(8,4) circle (1.5pt)
(8,6) circle (1.5pt)
(8,8) circle (1.5pt)%node[below=-0.5cm]{3}
% (9,1) circle (1.5pt)
(9,3) circle (1.5pt)
(9,5) circle (1.5pt)
(9,7) circle (1.5pt)
(9,9) circle (1.5pt)%node[below=-0.5cm]{2}
% (10,0)  node[below=-0.20cm]{*}
(10,2) circle (1.5pt)%node[below=-0.15cm]{*}
(10,4) circle (1.5pt)%node[below=-0.15cm]{*}
(10,6) circle (1.5pt)%node[below=-0.15cm]{*}
(10,8) circle (1.5pt)%node[below=-0.15cm]{*}
(10,10) circle (1.5pt)%node[below=-0.15cm]{*}
(11,1) node{$\circ$} 
(11,3) circle (1.5pt)node{$\circ$} 
(11,5) circle (1.5pt)node{$\circ$} 
(11,7) circle (1.5pt)node{$\circ$} 
(11,9) circle (1.5pt)node{$\circ$} 
(11,11) circle (1.5pt)node{$\circ$}  %node[below=-0.5cm]{0}
(12,2) circle (1.5pt)
(12,4) circle (1.5pt)
(12,6) circle (1.5pt)
(12,8) circle (1.5pt)
(12,10) circle (1.5pt)
(12,12) circle (1.5pt) %node[below=-0.5cm]{1}
(13,3) circle (1.5pt)
(13,5) circle (1.5pt)
(13,7) circle (1.5pt)
(13,9) circle (1.5pt) %node[below=-0.5cm]{2}
(13,11) circle (1.5pt)
% (14,2)  node[below=-0.15cm]{*}
(14,4) circle (1.5pt)%node[below=-0.15cm]{*}
(14,6) circle (1.5pt)%node[below=-0.15cm]{*}
(14,8) circle (1.5pt) %node[below=-0.5cm]{3}
(14,10) circle (1.5pt)
(15,3)node{$\circ$} 
(15,5) circle (1.5pt)node{$\circ$} 
(15,7) circle (1.5pt)node{$\circ$} % node[below=-0.5cm]{4}
(15,9) circle (1.5pt)% node{$\circ$} 
% (16,2) circle (1.5pt)
(16,4) circle (1.5pt)
(16,6) circle (1.5pt)
(16,8) circle (1.5pt) %node[below=-0.5cm]{5}
% (17,1) circle (1.5pt)
(17,3) circle (1.5pt)
(17,5) circle (1.5pt)%node[below=-0.5cm]{6}
%(18,0)  node[below=-0.2cm]{*}
(17,7) circle (1.5pt)
(18,2) circle (1.5pt)%node[below=-0.15cm]{*}
(18,4) circle (1.5pt) %node[below=-0.15cm]{*}
(18,6) circle (1.5pt)
(19,1) node{$\circ$} 
(19,3) circle (1.5pt)node{$\circ$}  %node[below=-0.5cm]{8}
(19,5) circle (1.5pt)node{$\circ$} 
(20,2) circle (1.5pt)
(20,4) circle (1.5pt)
(21,3) circle (1.5pt)%node[below=-0.5cm]{9}
 ; 
\draw[->] (0,0) -- coordinate (x axis mid) (22,0);
    \draw[->] (0,0) -- coordinate (y axis mid)(0,13);
    \foreach \x in {0,1,2,3,4,5,6,7,8,9,10,11,12,13,14,15,16,17,18,19,20,21}
        \draw [xshift=0cm](\x cm,0pt) -- (\x cm,-3pt)
         node[anchor=north] {$\x$};
          \foreach \y in {1,2,3,4,5,6,7,8,9,10,11,12}
        \draw (1pt,\y cm) -- (-3pt,\y cm) node[anchor=east] {$\y$};
    \node[below=0.2cm, right=4.5cm] at (x axis mid) {$r$};
    \node[left=0.2cm, below=-2.9cm] at (y axis mid) {$a$};%, rotate=90
 %\draw[<-, blue](0.1,0.1)-- node[below=2cm,left=2cm]{$g$} (11,11);   
 %\draw[<-, red](21.5,0.5)-- node[below=2cm,right=2.1cm]{$k$} (11,11);
  \end{tikzpicture} 
\caption{Order 2 - The lattice $T$ admits an embedding in $L$ with orthogonal $S$ such that $l(A_S)=l(A_T)-1$ (all realized by natural automorphisms) }\label{app:triangle2}
\end{figure}

\clearpage

\bibliographystyle{amsplain}
\bibliography{BiblioSmithApplications}
\end{document}